\colorlet{darkpink}{magenta!70!black}
\numberwithin{equation}{section}
\renewcommand{\P}{\mathbb{P}}
\newcommand{\R}{\mathbb{R}}
\newcommand{\N}{\mathbb{N}}
\newcommand{\dd}{\mathrm{d}}
\newcommand{\tr}{\operatorname{tr}}
\newcommand{\coloredZ}{\mathcal{Z}_{G}^{\mathcal{C}}}
\def\nb{{\mathfrak{nb}}}
\def\ch{{\mathfrak{ch}}}
\newcommand{\nc}{\newcommand}
\nc{\ep}{\varepsilon}
\nc{\Zsp}{\mathcal{Z}}
\nc{\Lsp}{\mathcal{L}}
\nc{\Ssp}{\mathcal{S}}
\nc{\Msp}{\mathcal{M}}
\nc{\Pcone}{\mathcal{P}}
\nc{\BC}{BC}
\nc{\cBC}{DCBC}
\nc{\IG}{$I$}
\nc{\cIG}{$cI$}
\nc{\hh}{\mathfrak{h}}
\newcommand{\HI}[1]{\textcolor{blue}{#1}}
\newcommand{\transp}[1]{ #1^{\top}}
\newcommand{\bdiag}{\mathrm{BlockDiag}}
\newcommand{\tri}{\mathrm{BlockTri}}
\newcommand{\RefExt}{\mathrm{RefExt}}
\newtheorem{thm}{Theorem}[section]
\newtheorem{lemma}[thm]{Lemma}
\newtheorem{corol}[thm]{Corollary}
\theoremstyle{definition}
\newtheorem{prop}[thm]{Proposition}
\newtheorem{defin}[thm]{Definition}
\newtheorem{remark}[thm]{Remark}
\newtheorem{example}[thm]{Example}
\lstdefinestyle{Rstyle}{
  language=R,
  basicstyle=\ttfamily\small,
  keywordstyle=\color{black}\bfseries,
  commentstyle=\color{gray},
  stringstyle=\color{red!60!black},
  numbers=left,
  numberstyle=\tiny\color{gray},
  stepnumber=1,
  numbersep=8pt,
  showstringspaces=false,
  frame=single,
  rulecolor=\color{black!20},
  breaklines=true
}
\title[A new class of colored Gaussian graphical models]{A new class of colored Gaussian graphical models with explicit normalizing constants
}
\author[A. Chojecki]{Adam Chojecki}
\email{adam.chojecki@pw.edu.pl}
\address{Faculty of Mathematics and Information Sciences, Warsaw University of Technology, Koszykowa 75, 00-662 Warsaw, Poland}
\author[P. Graczyk]{Piotr Graczyk}
\email{piotr.graczyk@univ-angers.fr}
\address{Univ Angers, CNRS, LAREMA, SFR MATHSTIC, F-49000 Angers, France}
\author[H. Ishi]{Hideyuki Ishi}
\email{hideyuki-ishi@omu.ac.jp}
\address{Osaka Metropolitan University, Department of Mathematics, 3-3-138 Sugimoto, Sumiyoshi-ku Osaka 558-8585, Japan}
\author[B. Ko\l{}odziejek]{Bartosz Ko\l{}odziejek}
\email{bartosz.kolodziejek@pw.edu.pl}
\address{Faculty of Mathematics and Information Sciences, Warsaw University of Technology, Koszykowa 75, 00-662 Warsaw, Poland}
\thanks{For the purpose of Open Access, the authors have applied a CC-BY public copyright licence to any Author Accepted Manuscript (AAM) version arising from this submission.
This research was funded in part by National Science Centre, Poland, UMO-2022/45/B/ST1/00545.
}
\begin{document}

\begin{abstract}
We study Bayesian model selection in colored Gaussian graphical models (CGGMs), which combine sparsity of conditional independencies with symmetry constraints encoded by vertex- and edge-colored graphs. A key computational bottleneck in Bayesian inference for CGGMs is the evaluation of the Diaconis-Ylvisaker normalizing constants, given by gamma-type integrals over cones of precision matrices with prescribed zeros and equality constraints. While explicit formulas are known for standard Gaussian graphical models only in special cases (e.g. decomposable graphs) and for a limited class of RCOP models, no general tractable framework has been available for broader families of CGGMs.

We introduce a new subclass of RCON models for which these normalizing constants admit closed-form expressions. On the algebraic side, we identify conditions on the space of colored precision matrices that guarantee tractability of the associated integrals, leading to the notions of Block-Cholesky spaces (BC-spaces) and Diagonally Commutative Block-Cholesky spaces (\cBC-spaces). On the combinatorial side, we characterize the colored graphs inducing such spaces via a color perfect elimination ordering and a $2$-path regularity condition, and define the resulting Color Elimination-Regular (CER) graphs and their symmetric variants. This class strictly extends decomposable graphs in the uncolored setting and contains all RCOP models associated with decomposable graphs.
In the one-color case, our framework reveals a close connection between \cBC-spaces and Bose-Mesner algebras.

For models defined on \BC- and \cBC-spaces, we derive explicit closed-form formulas for the normalizing constants in terms of a finite collection of structure constants and propose an efficient method for computing these quantities in the commutative case. Our results substantially broaden the range of CGGMs amenable to principled Bayesian structure learning in high-dimensional applications.

\end{abstract}
\subjclass[2020]{Primary 62H05, 60E05; Secondary 62E15}

\keywords{Normalizing constant, Bose-Mesner algebra, Colored graphical model, Bayesian model selection, regular graphs, generalized Cholesky decomposition, decomposable graph, Wishart distribution}

\maketitle

\section{Introduction}

Graphical Gaussian models (GGMs), also known as covariance-selection models \citet{D72, L96}, encode conditional independencies among  variables (indexed by an undirected graph $G$) through zeros in a precision matrix $K=\Sigma^{-1}$. These models have become indispensable tools for high-dimensional applications across fields such as neuroimaging, genetics, and finance. However, when the number of variables $p$ significantly exceeds the sample size $n$ and the graph is not sparse enough, estimating the non-zero entries of the precision matrix becomes challenging.

To address this, \citet{HL08} introduced colored Gaussian graphical models (CGGMs), which fuse two complementary forms of parsimony: the sparsity induced by conditional-independence zeros and symmetry constraints imposed through equality restrictions on the model parameters. Within their framework, three distinct model families, RCON (constraints on precision entries), RCOR (constraints on partial correlations), and RCOP (constraints induced by permutation-group invariance), are represented by vertex- and edge-colored graphs that explicitly encode which entries must coincide. This approach builds upon a rich tradition of exploiting invariance in multivariate Gaussian analysis \citet{Wilks, OP69}, and extends earlier ``lattice'' symmetry models \citet{AM98, Ma00} by unifying the independence structure and symmetry conditions into a single combinatorial object, a colored graph. The CGGMs generalize GGMs: if all vertices and edges of a graph have distinct colors, then the CGGM coincides with a GGM. 

Subsequent research has expanded upon this foundation across several key directions. First, structural theory and lattice-based search methods have been explored: \cite{Ge11} demonstrated that RCON, RCOR and RCOP model classes possess complete-lattice properties and adapted the Edwards-Havránek search procedure to colored models. 
Second, likelihood and Bayesian inference: \cite{HL07} provided iterative MLE algorithms; \cite{GM15} and \cite{MLG15} introduced composite likelihood and Bayesian methodologies utilizing the Diaconis-Ylvisaker conjugate prior (the colored $G$-Wishart), respectively. \cite{LiGM20} further developed double reversible jump MCMC algorithms coupled with linear regression to explore the space of RCON models. More recently, \cite{CAPP24} proposed a Bayesian framework to learn block-structured graphs, while \cite{Biom25} introduced a birth-death MCMC approach for structure learning in graphical models with explicit symmetry constraints. Third, penalized and high-dimensional estimation methodologies were developed: \cite{QXNX21} proposed an $\ell_1$-penalized composite-likelihood approach that jointly performs graph selection and symmetric shrinkage, and \cite{RRL21} applied fused graphical lasso penalties to simultaneously learn network structure and symmetries, motivated by brain connectivity studies. Finally, recent Bayesian work by \cite{GIKM22} leveraged explicit gamma-integral formulas derived from group-representation theory to enable model selection within RCOP complete graphs. Their model selection procedure was recently implemented in \cite{gips}.

Together, these contributions underscore that symmetry constraints not only reduce the required sample size for reliable estimation but also enhance the interpretability of model parameters, capturing meaningful structures such as gene exchangeability.

\subsection{Bayesian model selection in CGGMs}
A colored graph is identified with a pair $(G,\mathcal{C})$, where $G$ is an undirected graph and $\mathcal{C}$ is the coloring of $G$ (\citet{HL08}). 

We assume that a colored graph-valued random pair $(G,\mathcal{C})$ is distributed on the set $\mathcal{G}$ of colored graphs with probabilities $(p_{g,c})_{(g,c)\in\mathcal{G}}$, i.e.  $p_{g,c}=\P(G=g,\mathcal{C}=c)$. 

Let $\mathcal{P}_g^c$ be the colored cone, i.e., the set of positive definite (precision) matrices respecting conditional independencies and symmetry constraints given by $(g,c)$, i.e., $K\in \mathcal{P}_G^{\mathcal{C}}$ with $G=(V,E)$ if and only if
\begin{itemize}
    \item $K\in\mathrm{Sym}^+(p)$,
    \item $\{i,j\}\notin E \implies K_{ij}=0$,
    \item if vertices $i,j\in V$ have the same color, then $K_{ii}=K_{jj}$,
    \item if edges $\{i,j\}, \{i',j'\}\in E$ have the same color, then $K_{ij}=K_{i'j'}$,
\end{itemize}
see Section \ref{sec:matrix_subspaces}.
We assume that $K\mid (G,\mathcal{C})=(g,c)$ follows the Diaconis-Ylvisaker prior distribution for $K$ on $\mathcal{P}_g^c$ with hyperparameters $(\delta,D)\in(0,\infty)\times\mathrm{Sym}^+(p)$, namely, the distribution with density (with respect to the Lebesgue measure normalized with respect to the trace inner product, see Remark \ref{rem:Leb}) 
\[
\frac{1}{I_{g,c}(\delta,D)}\det(K)^{(\delta-2)/2}e^{-\frac12\tr(D K)} I_{\mathcal{P}_g^c}(K),
\]
where $I_{g,c}(\delta,D)$ is the normalizing constant. 
Assuming that the Gaussian vectors $Z_1,\ldots,Z_n$ given $\{K=k, (G,\mathcal{C})=(g,c)\}$ are i.i.d. $\mathrm{N}_{p}(0,k^{-1})$ random vectors, standard calculations show that the posterior probability is proportional to
\[
\P((G,\mathcal{C})=(g,c)\mid Z_1,\ldots,Z_n) \propto \frac{I_{g,c}(\delta+n,D+U)}{I_{g,c}(\delta,D)} p_{g,c}
\]
with $U=\sum_{i=1}^n Z_i Z_i^\top$.  
These constants take the form of gamma-like integrals over non-standard cones of symmetric matrices characterized by zero patterns and equality constraints (clustered entries).

Under the Bayesian paradigm, the aim is to identify the maximum a posteriori estimator - the colored graph $(g,c)$ that maximizes the posterior probability given the observed data. As shown in \cite{MohWit15, Moh24}, Bayesian approaches often outperform a single point estimate obtained via regularized optimization (e.g., the graphical lasso \cite{glasso}). 
However, computing normalizing constants is widely recognized as a significant bottleneck in Bayesian analyses involving graphical (and hence colored-graphical) Gaussian models. In cases where explicit formulas are unavailable, one can resort to approximation techniques, Monte Carlo simulations, or other numerical strategies, see the next section or a recent review paper \cite{Moh24}. Even in modest dimensions, an exhaustive enumeration of the model space is infeasible \cite{Ge11}, therefore Markov chain Monte Carlo methods are naturally used to perform posterior inference. We propose to tackle this problem by additionally limiting the search to a subclass of colored graphical models.

The primary objective of this paper is to introduce and investigate specific subclasses of colored graphs $\mathcal{G}$, along with a dedicated methodology for efficiently computing their normalizing constants. 

\subsection{Normalizing constants for GGMs and CGGMs}
Closed-form expressions for the $I_G(\delta,D)$ (GGM version) have long been known only for the two ``easy'' cases: complete graphs \cite{Muir} and their decomposable (chordal) relatives \cite{Ro02}, where the density factorizes over cliques. For non-decomposable graphs, practitioners resorted to Monte Carlo integration \cite{A-KM05} or to analytic surrogates such as Laplace approximation \cite{LD11}, or they avoided the normalizing constant calculation via an exchange algorithm embedded in trans-dimensional MCMC \cite{Len13, MohWit15}. A conceptual breakthrough came with \cite{ULR18}, who showed that an explicit, though algebraically intricate, formula exists for arbitrary $G$; their result establishes existence but is still too heavy to power large-scale structure learning. Building on the representation established in \cite{A-KM05},  \cite{MML23} derived an explicit closed-form approximation of the ratio of the normalizing constants, cutting computational cost without a significant loss of accuracy. Most recently, \cite{WMK25} pushed this line of work further by importing tools from random-matrix theory and Fourier analysis: they expressed $I_G(\delta,D)$ as an integral involving $I_{G^\ast}(\delta,\cdot)$, where $G^\ast$ is any chordal completion of $G$ (so that a closed-form expression for $I_{G^\ast}(\delta,D)$ is known). Exploiting this relation, they reduced the original high-dimensional integral $I_G(\delta,D)$ to a one-dimensional integral for graphs with minimum fill-in $1$, and, when $D$ is the identity matrix, derived explicit closed-form expressions (involving gamma and generalized hypergeometric functions) for many prime graphs.

The literature addressing normalizing constants for CGGMs is relatively limited. Notably, a few specific, illustrative examples have been examined in \cite{MLG15, LiGM20}. Broader results have been established for RCOP models on complete graphs \cite{GIKM22} and for RCOP models on homogeneous graphs \cite{GIK22}.\footnote{A graph is homogeneous if it is decomposable and does not contain the chain $\bullet-\bullet-\bullet-\bullet$ as an induced subgraph, see \cite{LM07}.}

\subsection{Contribution of the paper}
In this paper, we introduce a novel subclass of RCON models, which admit explicit normalizing constants. Initially, we derived algebraic conditions on the space of colored precision matrices, ensuring the tractability of gamma-like integrals. These conditions lead naturally to the definition of Block-Cholesky spaces (\BC-spaces) and Diagonally Commutative-Block-Cholesky spaces (\cBC-spaces).

We then provided a combinatorial characterization of the colored graphs that satisfy these algebraic conditions. This leads to a definition of $2$-path regular colored graphs, Color Elimination-Regular graphs (CER graphs) and symmetric CER graphs. The definition of $2$-path regularity is closely related to the Butterfly criterion of \cite{Seigal23}. 
Interestingly, the number of certain paths of length $2$ is crucial in the approximation of the normalizing constant for general GGMs given in \cite[Theorem 1]{MML23}, which suggests the possibility of generalizing of their result to $2$-path regular colored graphs. 

It is well known that decomposability of a graph is equivalent to the existence of a perfect elimination ordering (peo) of its vertices. In the framework of colored graphs, we identified a related notion of color perfect elimination ordering (cpeo), which is a peo which respects the vertex coloring. This in particular implies that our CER graphs are necessarily decomposable. 
Notably, our framework generalizes decomposable graphs from the classical, uncolored setting, thus substantially extending the applicability of CGGMs to high-dimensional scenarios. 

Our proposed class encompasses all RCOP models associated with decomposable graphs. Additionally, we emphasize that RCOP models provide a natural interpretation of graph coloring through the concept of distributional invariance of Gaussian vectors \cite{A75,gips}. Interestingly, in scenarios with a single vertex color, we uncover a close connection between \BC-spaces and Bose-Mesner algebras,  matrix algebras arising in the context of association schemes \cite{BM59}.

The explicit formulas for the normalizing constants depend on the so-called structure constants. While \cite{GIKM22} provided explicit formulas for these constants specifically for RCOP models corresponding to complete graphs colored by cyclic groups, our contribution includes a novel methodology for efficiently determining these structure constants within general \cBC-spaces. 

\subsection{Structure of the paper}

This paper is structured as follows. In Section \ref{sec:preli}, we review the necessary background material, including fundamental concepts from graph theory, the formalism of colored graphs, and the associated matrix subspaces that define Colored Gaussian Graphical Models.

Section \ref{sec:newclass} introduces our novel combinatorial framework. We begin by defining a color perfect elimination ordering and a $2$-path regularity condition. The combination of these properties leads to our central object of study: the class of Color Elimination-Regular (CER) graphs. We demonstrate that this class includes the important family of decomposable RCOP models and explore the algebraic structure of the associated matrix spaces, highlighting their connection to Bose-Mesner algebras.

In Section \ref{sec:bcspace}, we generalize these properties to define Block-Cholesky (\BC-spaces) and Diagonally Commutative Block-Cholesky (\cBC-spaces). We derive our main result: an explicit, closed-form expression for the normalizing constant for any model defined on these spaces. This formula depends on a set of structure constants, and we provide an efficient methodology for their computation in the commutative case.

Section \ref{sec:relations} places our framework in the context of restricted DAG (RDAG) models, clarifying the parallels between color perfect elimination, $2$-path regularity, and the structural conditions appearing in \cite{Seigal23}.

In Section \ref{sec:conclusion}, we summarize our contributions and discuss several directions for future research.

Finally, Section \ref{sec:proofs} contains the detailed proofs of our theoretical results.

\section{Preliminaries}\label{sec:preli}

We write $[n]=\{1,2,\ldots,n\}$ for $n\in\mathbb{N}=\{1,2,\ldots\}$, and set $[0]=\emptyset$.

\subsection{Basic graph theoretic notions}\label{sec:basics}

Let \(G=(V,E)\) be an undirected simple graph, where \(V\) is a nonempty finite set and \(E\) is a subset of the set $\mathcal{P}_2(V)$ of unordered pairs of distinct elements of $V$. We denote the number of vertices by \(p=|V|\). For vertices $v,w\in V$, we write \(v\sim w\) if \(\{v,w\}\in E\) and $v\nsim w$ otherwise.

For any subset of vertices \(A\subset V\), the induced subgraph \(G[A]\) is the graph \((A,E_A)\), where $E_A = \{ \{v,w\}\in E\colon v,w\in A\}$.

A graph $G$ is decomposable if every induced cycle in the graph is a triangle (i.e. there are no induced cycles of length greater than 3). 

The neighborhood of a vertex $v\in V$ is the set of its adjacent vertices, defined as
\[
\nb(v)=\{w\in V\colon w\sim v\}.
\]

A vertex $v\in V$ is called simplicial if its neighborhood \(\nb(v)\) forms a clique in $G$ (equivalently, if $\{v\}\cup\nb(v)$ forms a clique). 

Consider a permutation $\sigma\in\mathfrak{S}_p$, where $\mathfrak{S}_p$ is the symmetric group of degree $p$. An ordering of the vertices  $(v_{\sigma_1},\ldots,v_{\sigma_p})$ is a perfect elimination ordering (peo) if 
\[
\forall\, k\in[p]\quad v_{\sigma_k}\mbox{ is simplicial in }G[\{v_{\sigma_k},\ldots,v_{\sigma_p}\}].
\]
 The automorphism group of $G$, denoted $\mathrm{Aut}(G)$,  
is the set of all permutations 
$\sigma$ of $V$ that preserve the adjacency structure of the graph:
\[
\sigma(v) \sim \sigma(w)\quad\iff\quad v \sim w\qquad \mbox{for all } v, w \in V. 
\]

\subsection{Edge-vertex coloring of a graph}\label{sec:ev_colorings}
An edge-vertex coloring (or simply a coloring) of a graph \(G=(V,E)\) is an ordered pair $\mathcal{C} = (\mathcal{V}, \mathcal{E})$,
where $\mathcal{V} = \{V_1, V_2, \ldots, V_r\}$ is a partition of the vertex set $V$ into vertex color classes and 
$\mathcal{E} = \{E_{r+1}, E_{r+2}, \ldots, E_{r+R}\}$ is a partition of the edge set $E$ into edge color classes. 

This edge-vertex coloring of $G$ can be conveniently represented by extending the graph to include loops. A reflexive extension of \(G\), denoted \(\tilde{G} = (V,\tilde{E})\), is formed by adding a loop at every vertex: 
\[
\tilde{E} = E\cup \{\{v\}\colon v\in V\}.
\]
We refer to \(\tilde E\) as the set of extended edges of \(G\). 

The vertex coloring \(\mathcal{V}\) can be translated into a coloring of loops. 
For each vertex color class \(V_i\), we define a corresponding loop color class:
\[
E_i = \big\{\{v\}\colon v\in V_i\big\}.
\]
This creates a unified edge coloring
\[
\tilde{\mathcal{E}} = \{E_1,\ldots,E_r,E_{r+1},\ldots,E_{r+R}\},
\] 
which is a partition of the extended edge set \(\tilde{E}\).

This reflexive extension transformation, denoted \(\RefExt\colon(G, \mathcal{C}) \mapsto (\tilde{G}, \tilde{\mathcal{E}})\), preserves all information from the original colored graph. Consequently, we can identify the coloring $\mathcal{C}$ of $G$ with the partition \(\tilde{\mathcal{E}}\) of \(\tilde{G}\).

This allows us to define a single coloring function \(c\colon \tilde{E}\to [r+R]\) by 
\[
c(\{v, w\}) = k \quad \text{if and only if} \quad \{v, w\} \in E_k.
\]
We adopt the convention of writing \(c(v,w)\) for the color of the edge \(\{v,w\}\) and \(c(v) = c(v,v)\) for the color of vertex \(v\). 
We also extend this function to non-edges by setting $c(v,w)=0$ if $\{v,w\}\notin E$. 

\begin{defin}
A colored graph $(G,\mathcal{C})$ is edge-regular if any pair of edges in the same color connects the
same vertex color classes. 

A colored graph  $(G,\mathcal{C})$ is vertex-regular if any two vertices with the same color have the same number of neighbors of a given color in each vertex color class. Formally, if $c(v)=c(w)$, then 
\[
\forall\,i\in [r],\, j\in[r+R]\setminus[r] \quad |\{ u\in V_i\colon c(v,u)=j \}| = |\{ u\in V_i\colon c(w,u) = j\}|.
\]
\end{defin}
These regularity conditions were discussed in \cite{GeLau2012}, with vertex regularity appearing earlier in a non-statistical context \cite{Siemons83,Sachs}.

An important example of graph colorings arises from permutation subgroups.  Let $\Gamma$ be a subgroup of $\mathrm{Aut}(G)$. The action of $\Gamma$ on the set of extended edges $\tilde{E}$ is defined by 
\[
\sigma\cdot\{v,w\} = \{\sigma(v),\sigma(w)\}
\]
for $\sigma\in\Gamma$ and $\{v,w\}\in \tilde{E}$. This action partitions $\tilde{E}$ into orbits $\tilde{\mathcal{E}}=\{E_k\}_k$, which naturally define a coloring of $G$. By construction, any permutation in $\Gamma\subset\mathrm{Aut}(G)$ is an automorphism of the colored graph $(G,\mathcal{C})$.

\subsection{Matrix subspaces related to a colored graph}\label{sec:matrix_subspaces}

Let $G=(V,E)$ be a simple undirected graph with $V=\{1,\ldots,p\}$. We define the matrix space
\[
\mathcal{Z}_G = \left\{ x\in\mathrm{Sym}(p)\colon x_{ij}=0\mbox{ whenever } \{i,j\}\notin E\mbox{ for }i\neq j\right\}, 
\]
where $\mathrm{Sym}(p)$ is the space of real symmetric $p\times p$ matrices.

Given a coloring $\mathcal{C}=(\mathcal{V},\mathcal{E})$ of $G$, we define
\[
\mathcal{Z}_G^{\mathcal{C}} = \left\{ x\in\mathcal{Z}_G \colon x_{ij}=x_{kl}\mbox{ whenever } c(i,j) = c(k,l) \mbox{ for }\{i,j\},\{k,l\}\in\tilde{E}\right\}. 
\]
This space imposes equality constraints on the entries of the matrices corresponding to vertices and edges of the same color. 

The subspace $\coloredZ$ corresponds to the RCON (Restricted CONcentration) model from \cite{HL08}, which places equality restrictions on the entries of the concentration matrix (inverse covariance matrix).
When the coloring is derived from a subgroup $\Gamma$ of $\mathrm{Aut}(G)$, we obtain the RCOP (Restricted by Permutation) model: 
\[
\mathcal{Z}_G^{\Gamma} = \left\{ x\in\mathcal{Z}_G \colon   x_{\sigma(i)\sigma(j)}=x_{ij}\mbox{ for all }i,j\in V, \sigma\in\Gamma\right\}. 
\] 
If the coloring $\mathcal{C}$ is defined by the orbits of $\Gamma$, then $\mathcal{Z}_G^{\Gamma} = \mathcal{Z}_G^{\mathcal{C}}$. 

It is possible for different groups to generate the same subspace.  For instance, for a complete graph on $3$ vertices, $G=K_3$, the groups $\Gamma_1=\left<(1,2,3)\right>$ and $\Gamma_2=\mathfrak{S}_3$ both yield the same subspace, $\mathcal{Z}_{G}^{\Gamma_1} = \mathcal{Z}_{G}^{\Gamma_2}$. 
To resolve this ambiguity, one can associate each subspace with the largest group that generates it, known as the $2^\ast$-closure of the group \cite{Wielandt,Siemons82,Siemons83} and \cite[p. 1502]{Comb}.

As shown in \cite{GeLau2012}, if a coloring is edge-regular, the same restrictions define an RCOR (Restricted CORrelation) model. RCOP models are both edge and vertex regular, making them a subclass of both RCON and RCOR models. This subclass is notable for its combinatorial structure and its statistical interpretation in terms of distributional invariance under the group action.

We consider colored graphs that need be neither vertex- nor edge-regular. This family sits strictly inside the RCON models while strictly containing the RCOP models.

 If $(G,\mathcal{C})$ is edge-regular, the space $\coloredZ$ decomposes into a direct sum of blocks:
    \[
\coloredZ= \bigoplus_{1\leq k\leq l \leq r} M_{lk},
    \]
where $M_{lk}$ is the linear space spanned by the $V_l\times V_k$ and $V_k\times V_l$ blocks of $\mathcal{Z}_G^{\mathcal{C}}$. 

With each colored graph $(G,\mathcal{C})$, we associate a colored Gaussian graphical model (or Markov Random Field) given by
\[
\{ \mathrm{N}_p(0,K^{-1})\colon K\in \coloredZ\cap\mathrm{Sym}^+(p)\},
\]
where $\mathrm{Sym}^+(p)$ is the cone of symmetric positive definite $p\times p$ matrices.
This RCON model forms a natural exponential family.

\section{New class of colored graphs}\label{sec:newclass}

\subsection{Color perfect elimination ordering}\label{sec:cpeo}

Let the vertex set $V$ be partitioned into color classes $\mathcal{V} = \{V_1,\ldots,V_r\}$. This defines a vertex coloring function $c\colon V\to [r]$ given by
\[
c(v)=i\quad\mbox{ if and only if }\quad v\in V_i.
\]
Thus, each subset $V_i$ is associated with the color $i$. We now introduce a special ordering on these color classes.

\begin{defin}
A color perfect elimination ordering (cpeo) is an ordered partition $(V_{\eta_1},\ldots, V_{\eta_r})$, determined by a permutation $\eta\in\mathfrak{S}_r$ of  vertex-colors, such that every vertex is simplicial in the subgraph induced by its own color class and all subsequent color classes. Formally:
\[
\forall\, i\in [r]\quad \forall\, v\in V_{\eta_i}\quad v\mbox{ is simplicial in }G[V_{\eta_i}\cup \ldots \cup V_{\eta_r}]. 
\]
\end{defin}
In essence, a cpeo is an ordering of the color classes that guarantees that any vertex ordering consistent with it is a perfect elimination ordering (peo) for the entire graph $G$. The existence of a cpeo implies that the graph is decomposable, though the converse is not always true.

\begin{remark}
  If $G$ is a complete graph,  then, for any $\eta\in\mathfrak{S}_r$, $(V_{\eta_1},\ldots, V_{\eta_r})$ is a cpeo. 
\end{remark}

We present a greedy algorithm which finds a cpeo if one exists, see Algorithm \ref{alg:1}.

\begin{algorithm}
\caption{Greedy Algorithm for finding a cpeo \\
\texttt{greedy\_find\_cpeo(\(G, \mathcal{V}\))} }\label{alg:1}
\SetAlgoLined
\KwIn{Graph \(G = (V,E)\) and a vertex coloring \(\mathcal{V} = \{V_1, V_2, \ldots, V_r\}\)}
\KwOut{cpeo $\eta\in\mathfrak{S}_r$ if it exists, or \texttt{null} otherwise}

\If{\(V = \varnothing\)}{
    \Return \texttt{()};
}
\For{\(k = 1\) \KwTo \(r\)}{
    \If{every vertex \(v \in V_k\) is simplicial in \(G\)}{
        \(\eta'\) \(\gets\) \texttt{greedy\_find\_cpeo(}\(G[V \setminus V_k],\,\mathcal{V} \setminus \{V_k\}\)\texttt{)};
        
        \If{\(\eta' \neq \texttt{null}\)}{
            \Return \texttt{(}\(k\), \(\eta'\)\texttt{)}; \tcp{Prepend color \(k\) to the ordering.}
        }
    }
}
\Return \texttt{null}; \tcp{No valid cpeo exists.}
\end{algorithm}

A cpeo imposes a strong structural constraint on the graph. An ordering $\eta\in\mathfrak{S}_r$ is cpeo if and only if for every triple of vertex colors $i,j,k\in [r]$ with
\[
\eta_k\le\eta_j\le\eta_i  
\]
and any three vertices $v\in V_{\eta_k},w\in V_{\eta_j},w'\in V_{\eta_i}$ the following chordality condition holds:
\[
w\sim v\mbox{ and }v \sim w'\implies w\sim w'.
\]

Setting $k=j=i$ above, we arrive at the following result describing the graphs induced by one vertex color. 
\begin{corol}\label{corol:cliques}
A direct consequence of this condition (by setting $i=j=k$) is that for any color class
$V_i$, the induced subgraph $G[V_i]$ must be a disjoint union of cliques.
\end{corol}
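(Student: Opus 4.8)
The plan is to unwind what the chordality condition says once we set $i=j=k$, and then recognize it as a classical characterization of a disjoint union of cliques. When $\eta_k=\eta_j=\eta_i$ the ordering requirement $\eta_k\le\eta_j\le\eta_i$ holds automatically, and the three vertices $v,w,w'$ now all range over the single class $V_i$. The condition therefore reduces to: for every $v,w,w'\in V_i$,
\[
w\sim v\ \text{and}\ v\sim w' \implies w\sim w'.
\]
Writing $H:=G[V_i]$ for the induced subgraph, this says precisely that $H$ contains no induced path on three vertices (no configuration $w\sim v\sim w'$ with $w\nsim w'$), i.e.\ that $H$ is $P_3$-free. So the corollary is equivalent to the standard fact that a $P_3$-free graph is a disjoint union of cliques, and the whole task is to prove that equivalence cleanly.

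The key step is to make the clique partition explicit via an equivalence relation. I would define $\approx$ on $V_i$ by declaring $w\approx w'$ iff $w=w'$ or $w\sim w'$. This relation is reflexive and symmetric by construction, and the $P_3$-free condition above supplies transitivity: if $w\approx v$ and $v\approx w'$, then either one of the defining equalities ($w=v$ or $v=w'$) makes the conclusion immediate, or else $w\sim v$ and $v\sim w'$, in which case either $w=w'$ or the condition forces $w\sim w'$; in all cases $w\approx w'$. Hence $\approx$ is an equivalence relation, and I would take its classes as the candidate cliques. Within a class any two distinct vertices are adjacent, so each class induces a clique; conversely an edge joining vertices of two distinct classes would force those classes to coincide, a contradiction. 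Therefore $H$ is exactly the disjoint union of these cliques.

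The only point requiring genuine care — and the closest thing to an obstacle in an otherwise routine argument — is the bookkeeping of the degenerate triples, where $v,w,w'$ are not assumed pairwise distinct. Since $G$ is a simple graph, $w\sim w'$ presupposes $w\neq w'$, so the displayed implication must be read as vacuously true (or automatically satisfied) whenever two of the three vertices coincide; the substantive content lives on triples of distinct vertices, which is precisely the induced-$P_3$ case. Keeping this convention straight is exactly what makes the verification of transitivity of $\approx$ go through. With that caveat the statement is an immediate specialization of the chordality condition, and no further structure of the cpeo is invoked.
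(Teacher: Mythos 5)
Your proof is correct and follows exactly the route the paper intends: specialize the chordality condition to $i=j=k$, observe that $G[V_i]$ is then $P_3$-free, and conclude it is a disjoint union of cliques. The paper treats this as immediate and gives no further argument, so your equivalence-relation verification (and your careful reading of the degenerate triples, which indeed must be excluded since $\sim$ is irreflexive in a simple graph) simply fills in the routine details of the same approach.
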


The existence of a cpeo can be characterized using an auxiliary directed graph $H$ on the set of colors $[r]$. 
\begin{lemma}
Define a directed graph $H=([r], E^H)$ by placing an edge $(h\to k)\in E^H$ if and only if there exists $v\in V_k$, and distinct $w,w'\in V_k\cup V_h$ satisfying
\[
w\sim v\sim w'\quad\mbox{ and }\quad w\not\sim w'.
\]
A cpeo exists if and only if this directed graph $H$ is a directed acyclic graph (DAG). Any topological ordering of
$H$ provides a valid cpeo.
\end{lemma}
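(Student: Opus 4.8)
The plan is to read each directed edge of $H$ as a \emph{precedence constraint} on the color classes, and to show that a cpeo is precisely a linear order of the colors compatible with all of these constraints; the existence question then becomes the acyclicity question for $H$. The starting point is the elementary reformulation already isolated in the chordality condition preceding the statement: an ordering $\eta$ is a cpeo if and only if for every vertex $v$ and every pair $w,w'\in\nb(v)$ with $w\not\sim w'$, at least one of $w,w'$ lies in a color class strictly earlier than $c(v)$ in $\eta$. Thus every ``bad cherry'' $w\sim v\sim w'$ with $w\not\sim w'$ imposes a constraint on the positions of $c(v),c(w),c(w')$, and the whole proof is the translation of these constraints into $H$ and back.

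For the forward implication (a cpeo exists $\Rightarrow H$ is a DAG) I would show that each edge $(h\to k)$ of $H$ is \emph{forced}. An edge $(h\to k)$ arises from some $v\in V_k$ with two non-adjacent neighbors $w,w'\in V_k\cup V_h$. When the class $V_k$ is eliminated it is still present (together with itself), so any neighbor of $v$ lying in $V_k$ has not yet been removed; hence simpliciality of $v$ in $G[V_{\eta_i}\cup\cdots\cup V_{\eta_r}]$, where $V_{\eta_i}=V_k$, can hold only if $V_h$ has already been removed, i.e. $h$ precedes $k$ in $\eta$. The degenerate case $h=k$ gives a self-loop and a vertex that is never simplicial, so no cpeo exists. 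Consequently any cpeo refines all precedences $h\to k$, so a directed cycle in $H$ would force a color to precede itself; taking the contrapositive yields the forward direction and shows, simultaneously, that every cpeo is a topological ordering of $H$.

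For the backward implication I would take a topological ordering $\eta$ of the DAG $H$ and verify, by reverse induction on the position $i$ from $r$ down to $1$, that every $v\in V_{\eta_i}$ is simplicial in $G[V_{\eta_i}\cup\cdots\cup V_{\eta_r}]$. If this fails, some $v\in V_k$ (with $V_k=V_{\eta_i}$) has non-adjacent neighbors $w,w'$ in classes at positions $\ge i$. I would then split on their colors. If $w,w'$ share a color, or if one of them lies in $V_k$ itself, then $\{w,w'\}\subset V_k\cup V_h$ for a single $h$, so $(h\to k)\in E^H$ with $h$ at a position $\ge i$; this contradicts that $\eta$ is a topological ordering (or produces a self-loop, contradicting that $H$ is a DAG). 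This settles all ``two-class'' obstructions and, via Corollary~\ref{corol:cliques}, the requirement that each $G[V_i]$ be a disjoint union of cliques.

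The step I expect to be the main obstacle is the remaining case, in which $w,w'$ lie in two \emph{distinct} classes, both different from $V_k$: such a ``multi-class'' cherry does not by itself produce a single edge of $H$, and the constraint it imposes is genuinely disjunctive (\emph{one} of the two outer colors must precede $c(v)$). The crux is to show that acyclicity of $H$ already forces enough chordal structure that these disjunctive obstructions cannot survive a topological ordering, equivalently that a topological ordering cannot exhibit a multi-class bad cherry without exhibiting a two-class one. I would attack this by passing to a minimal violating configuration and using chordality of $G$ to shorten the cherry -- replacing $w,w'$ by a pair of non-adjacent neighbors spanning fewer classes -- thereby reducing to the two-class case already handled; an equivalent route, matching Algorithm~\ref{alg:1}, is to prove that whenever $H$ is a DAG some entire color class is simplicial and its removal preserves acyclicity, and then induct on $r$. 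Controlling this reduction is precisely where the global (de)composability hidden in the local two-color conditions must be extracted, and it is the one place I expect the argument to require real work rather than bookkeeping.
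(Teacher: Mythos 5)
The paper states this lemma without proof (Section~\ref{sec:proofs} contains no argument for it), so your proposal can only be judged on its own merits. Your forward direction is correct and complete: every edge $h\to k$ of $H$ with $h\neq k$ forces the class $V_h$ to precede $V_k$ in any cpeo, a self-loop $k\to k$ makes its witnessing center vertex never simplicial, hence any cpeo linearizes $E^H$ and $H$ must be acyclic.

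The step you flag as ``the main obstacle'' is, however, a genuine and unfillable gap: the backward implication is false as stated, so no reduction can close it. The digraph $H$ records only cherries $w\sim v\sim w'$, $w\not\sim w'$, whose two endpoints lie in $V_{c(v)}\cup V_h$ for a \emph{single} $h$; a cherry whose endpoints lie in two distinct classes, both different from $c(v)$, imposes the disjunctive constraint ``$c(w)$ or $c(w')$ precedes $c(v)$'' but contributes no edge to $H$, and such disjunctions can be jointly unsatisfiable while $H$ is acyclic. Most starkly, if every color class is a singleton, then two distinct $w,w'\in V_k\cup V_h$ force one of them to equal $v$, contradicting $w\sim v$; hence $E^H=\emptyset$ for \emph{every} graph, so $H$ is always a DAG, while a cpeo is then just a peo and exists only when $G$ is chordal (take $G=C_4$ with four distinct colors). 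Both of your proposed repairs also fail on a chordal example: let $G$ be the disjoint union of the three paths $a_2-a_1-a_3$, $b_1-b_2-b_3$, $c_1-c_3-c_2$, subscripts indicating colors. Every cherry is three-colored, so $E^H=\emptyset$ and $H$ is a DAG; yet whichever color is placed first, the middle vertex of the corresponding path fails to be simplicial in $G$, so no cpeo exists. Here no color class is fully simplicial (defeating the ``eliminate a simplicial class and induct'' route), and the cherries cannot be shortened since they sit in different components (defeating the ``minimal configuration'' route). The final claim of the lemma fails even when a cpeo exists: for the single path $w-v-w'$ with three distinct colors, $H$ is edgeless, so every ordering is topological, but the ordering placing $c(v)$ first is not a cpeo. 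In short, the defect lies in the lemma itself --- $E^H$ as defined cannot encode the three-class cherries, whose constraints are disjunctions rather than precedences --- so the statement would need a stronger definition of $E^H$ or extra hypotheses before any proof along your (or any) lines could succeed.
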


\subsection{$2$-path regularity}\label{sec:2path}
Given an ordered partition of vertices  $(V_{\eta_1},\ldots,V_{\eta_r})$, which is not necessarily a cpeo, we define the set of vertices up to color $i$ as:
\[
V_{\leq i} \coloneq  \{u\in V\colon c(u)\leq i\}= V_{\eta_1} \cup \cdots \cup V_{\eta_i}.
\]
We now introduce a function to count specific $2$-paths in the graph.

For any extended edge \(\{v, w\} \in \tilde{E}\), the $2$-path function from $v$ to $w$, 
\[
m_{v\to w}\colon [r+R] \times [r+R] \to \mathbb{N}\cup\{0\},
\]
is defined as
\begin{align*}
m_{v\to w}(k, h) = \Bigl| \Big\{ u \in V_{\le c(v) \wedge c(w)} \colon c(v,u)=k\mbox{ and } c(u,w)=h\Big\} \Bigr|,
\end{align*}
where we write $a\wedge b = \min\{a,b\}$.
This function counts the number of paths of length $2$ from \(v\) to \(w\), where the intermediate vertex
$u$ belongs to a color class that appears no later than those of $v$ and $w$ and where the edge from \(v\) to the $u$ is colored \(k\) and the edge from $u$ to \(w\) is colored \(h\). 

For every  \(\{v, w\} \in \tilde{E}\), we introduce the symmetric $2$-path function 
\[
m_{v\leftrightarrow w}\colon [r+R]\times[r+R]\to \mathbb{N}\cup\{0\},
\]
defined by 
\begin{align*}
m_{v\leftrightarrow w}(k, h) = m_{v\to w}(k, h) + m_{v\to w}(h, k).
\end{align*}
This symmetrization ensures that the count of $2$-paths is unchanged when swapping the roles of $k$ and $h$ or when swapping the roles of $v$ and $w$; indeed we have $m_{v\to w}(h, k) = m_{w\to v}(k, h)$.

We note that both \(m_{v\to w}\) and \(m_{v\leftrightarrow w}\) depend on the vertex color ordering since the intermediate vertices are restricted to \(V_{\le c(v)\wedge c(w)}\).

For $i\in[r]$, define the set $F_i$ of extended-edge colors (including loop-colors) between vertices of color $i$, that is,
\begin{align}\label{eq:Fi}
F_i = \left\{c(v,w)\colon c(v)=i=c(w)\mbox{ and }\{v,w\}\in \tilde{E}\right\}.
\end{align}
Define also $F = \bigcup_{i\in [r]} F_i$.

\begin{defin}[$2$-path regularity]\label{def:2_path_regularity}
A colored graph \((G,\mathcal{C})\) is $2$-path regular with respect to the ordered partition $(V_{\eta_1},\ldots,V_{\eta_r})$ if the symmetric $2$-path function is constant on extended-edge color classes, i.e.,
\begin{enumerate}
    \item[(M1)] for all \(\{v, w\}, \{v', w'\} \in \tilde{E}\), we have
    \[
c(v,w)=c(v',w')\quad\implies\quad     m_{v\leftrightarrow w} = m_{v'\leftrightarrow w'}.
    \]
\end{enumerate}
The graph is said to be symmetric $2$-path regular if it is $2$-path regular and, additionally, for any two vertices $v$ and $w$ of the same color, the $2$-path count from $v$ to $w$ is the same as from $w$ to $v$:
\begin{enumerate}
 \item[(M2)] for all $v,w\in V$, 
 \[
    c(v)=c(w)\quad\implies\quad 
    m_{v\to w}\big|_{F\times F} = m_{w\to v}\big|_{F\times F}.
    \]
\end{enumerate}
\end{defin}

The role of the set $F$ is that colors in $F$ never connect distinct vertex-color classes:
if $k\in F_i$ and $c(v,w)=k$, then necessarily $c(v)=c(w)=i$, see Lemma \ref{lem:edge_regularity_on_diagonal}.
Consequently, for $c(v)=c(w)=i$ one has
\[
m_{v\to w}\big|_{F\times F}
= m_{v\to w}\big|_{F_i\times F_i}
\quad\text{and}\quad
m_{v\to w}(k,h)=0 \text{ whenever } (k,h)\notin F_i\times F_i.
\]
Moreover, for $k,h\in F_i$ we may restrict the intermediate vertex to $V_i$,
\begin{align*}
m_{v\to w}(k, h) = \Bigl| \Big\{ u \in V_{i} \colon c(v,u)=k\mbox{ and } c(u,w)=h\Big\} \Bigr|.
\end{align*}




\subsection{Color Elimination-Regular Graph}\label{sec:cer}

\begin{defin}
Let $(G,\mathcal{C})$ be a colored graph with a vertex partition $\mathcal{V}=\{V_1,\ldots,V_r\}$. 

We say that $(G,\mathcal{C})$ is color elimination-regular (CER) if there exists an ordered partition of its vertices, $(V_{\eta_1},\ldots,V_{\eta_r})$, that satisfies two conditions simultaneously:
\begin{itemize}
    \item[(i)] The ordering is a color perfect elimination ordering (cpeo) for $G$. 
    \item[(ii)] The graph $(G,\mathcal{C})$ is $2$-path regular with respect to the same ordering.
\end{itemize}

A CER graph is symmetric if, in addition, $(G,\mathcal{C})$ is symmetric $2$-path regular with respect to the same cpeo.
\end{defin}

Recall from Corollary~\ref{corol:cliques} that if a graph admits a cpeo, then for each color class $V_i$ the induced subgraph $G[V_i]$ is a disjoint union of cliques. 
If $(G,\mathcal{C})$ is CER, we can refine this as follows.

\begin{corol}
Let $\mathcal{C}_i$ denote the coloring of $G[V_i]$ induced by $\mathcal{C}$.
Under (M1) and (M2), any two cliques of $G[V_i]$ are isomorphic as colored graphs.
\end{corol}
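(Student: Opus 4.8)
The plan is to fix a single color class $V_i$ and study the edge-colored graph $G[V_i]$. By Corollary~\ref{corol:cliques} the existence of a cpeo makes $G[V_i]$ a disjoint union of cliques $C_1,\ldots,C_m$. Since every vertex of $V_i$ carries the same vertex-color $i$, a colored-graph isomorphism between two cliques has only to preserve edge-colors, so the entire content of the statement lies in matching the edge-colorings. I would argue in two stages: first that all cliques share the same size and the same color-degree data, and then that these data can be upgraded to an explicit colored isomorphism.

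For the first stage I would apply (M1) to the loops. Every loop $\{v,v\}$ has color $c(v,v)=i$, so (M1) forces $m_{v\leftrightarrow v}=m_{w\leftrightarrow w}$ for all $v,w\in V_i$. Using the remark following Definition~\ref{def:2_path_regularity}, for $k,h\in F_i$ the intermediate vertex lies in $V_i$, and since $c(v,u)=c(u,v)$ one gets $m_{v\to v}(k,h)=0$ for $k\neq h$ and $m_{v\to v}(k,k)=d_k(v)$, where $d_k(v)=|\{u\in V_i\colon c(v,u)=k\}|$ is the color-$k$ degree of $v$ inside $V_i$. Hence $m_{v\leftrightarrow v}(k,k)=2\,d_k(v)$, and (M1) yields $d_k(v)=d_k$ independent of $v$ for every $k\in F_i$. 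As $G[V_i]$ is a disjoint union of cliques, the total degree $\sum_k d_k$ equals $s-1$, where $s$ is the size of the clique containing $v$; uniformity of the $d_k$ therefore forces all cliques to have a common size $s$ and a common color-degree profile $(d_k)_{k\in F_i}$.

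For the second stage I would build a colored isomorphism $\phi\colon C_a\to C_b$ by a greedy back-and-forth extension. I would start by sending an arbitrary vertex of $C_a$ to an arbitrary vertex of $C_b$, which is already a partial colored isomorphism because both cliques have $s$ vertices of the single vertex-color $i$ with identical color-degree data. Given a partial map preserving all edge-colors on a matched subset, the edge-version of (M1) guarantees that for any edge-color and any color pattern $(k,h)$ the number of vertices $u$ realizing $c(v,u)=k$, $c(u,w)=h$ around an edge of that color agrees in $C_a$ and $C_b$, while (M2) supplies the symmetry $m_{v\to w}(k,h)=m_{v\to w}(h,k)$ that makes these counts orientation-free. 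These invariants let me choose an image for a new vertex consistent with all previously matched edges, and induction on the number of matched vertices should then produce a full colored isomorphism.

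The hard part will be exactly this promotion of ``equal local $2$-path data'' to a genuine colored isomorphism. Matching color-degrees and symmetric $2$-path counts is a statement about the intersection parameters of the edge-colored cliques, and in full generality equal parameters need not force isomorphism (as strongly regular graphs with identical parameters illustrate). I therefore expect the delicate point to be an honest verification that, within the CER setting, the extension step never gets stuck, i.e. that the counts furnished by (M1) and (M2) are fine enough to pin down each clique's coloring up to isomorphism and not merely up to parameters. I would address this by constructing the isomorphism one color at a time and checking at each stage that the partial colored matching extends, invoking (M1) and (M2) to guarantee equinumerous choices for the next vertex.
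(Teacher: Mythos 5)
The paper states this corollary without proof, so there is no official argument to compare against; judged on its own merits, your stage~1 is correct and is, in fact, all that the hypotheses can deliver. Applying (M1) to loops (which all carry the vertex color $i$) forces every $v\in V_i$ to have the same color-degree profile $(d_k)_{k\in F_i}$, hence all cliques share the same size; applying (M1) and (M2) to edges shows further that the induced colorings of any two cliques have identical colored $2$-path counts, i.e.\ identical intersection numbers. In the language of Section~\ref{sec:intersection}, the cliques are \emph{algebraically} isomorphic as symmetric association schemes.

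The gap is your stage~2, exactly where you feared, and it cannot be repaired: equal intersection numbers cannot be upgraded to a colored isomorphism, and the statement as written is in fact false. Take $V=V_1$ of size $32$ with a single vertex color, let $G$ be the disjoint union of two $16$-cliques $C_1$ and $C_2$, and color the pairs inside $C_1$ as ``edge''/``nonedge'' according to the Shrikhande graph and the pairs inside $C_2$ according to the $4\times 4$ rook's graph. Both are $\mathrm{srg}(16,6,2,2)$, so every $2$-path count inside either clique is determined by these shared parameters, while all cross-clique counts vanish; moreover every vertex of a disjoint union of cliques is simplicial, so the trivial one-class ordering is a cpeo. Thus (M1) and (M2) hold and the colored graph is symmetric CER, yet $C_1$ and $C_2$ are not isomorphic as colored graphs, because the Shrikhande and rook's graphs are non-isomorphic (the paper itself invokes the Shrikhande graph in the remark following this corollary, for a different purpose). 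Consequently your back-and-forth extension must get stuck at some stage, no matter how the extension order is chosen. What \emph{is} provable is (a) your stage~1, the equality of all colored intersection numbers, and (b) the full isomorphism claim under stronger hypotheses, e.g.\ for RCOP colorings, where any $\sigma\in\Gamma$ with $\sigma(v)=v'$ maps the clique of $v$ onto the clique of $v'$ and is automatically color-preserving.
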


\begin{remark}
Consider a CER complete graph  with a single vertex color.  In the next section we show that any coloring arising from a transitive subgroup $\Gamma$ has this property. This prompts the question: are there CER colorings in this setting that do not come from a subgroup?

 The answer is yes. Any such coloring (with undirected edge colors) determines a symmetric association scheme, and it is well known that not every association scheme is Schurian (i.e., arises as the orbitals of a subgroup $\Gamma$). 
 
 A concrete example comes from the Shrikhande graph, a strongly regular graph with parameters $(\nu, k, \lambda,\mu)=(16,6,2,2)$. Let $A$ be the  adjacency matrix of the Shrikhande graph on the vertex set $V=[16]$. For a strongly regular graph one has (here $J$ is the all-ones $16\times 16$ matrix and $I$ is the identity matrix)
\[
AJ=JA=kJ\quad\mbox{and}\quad A^2 = kI +\lambda A+\mu(J-I-A). 
\]
Define a $2$-edge-coloring of the complete graph $G=K_{16}$ by giving an unordered pair \({v,w}\) the color “edge’’ if \(A_{vw}=1\) and the color “nonedge’’ if \(A_{vw}=0\).

Let \(\mathrm{Aut}(\mathrm{Sh})\) be the automorphism group of the Shrikhande graph. A permutation of \(V\) preserves our two edge-color classes if and only if it is an element of \(\mathrm{Aut}(\mathrm{Sh})\), so every color-preserving permutation lies in \(\mathrm{Aut}(\mathrm{Sh})\). However, the action of \(\mathrm{Aut}(\mathrm{Sh})\) on unordered pairs of distinct vertices has three orbits: one consisting of edges and two distinct orbits of nonedges.  Consequently, our $2$-color partition is coarser than the orbital partition of any subgroup of \(\mathrm{Aut}(\mathrm{Sh})\). Therefore this symmetric CER does not come from any subgroup, i.e., it is non-Schurian.
\end{remark}

Below we collect some basic properties and caveats concerning CER graphs.
\begin{itemize}
    \item The $2$-path conditions (M1) and (M2) alone do not imply the existence of a cpeo. 
    For example, a $4$-cycle with a single vertex color and a single edge color satisfies
    (M1)-(M2) with respect to any ordering, but it is not decomposable and hence admits no cpeo.

    \item A CER graph need be neither edge-regular nor vertex-regular in the sense of 
    Section~\ref{sec:ev_colorings}. Thus the class of CER graphs strictly extends RCOP graphs
    and is not characterized by standard regularity conditions.

   \item The choice of cpeo is essential: a colored graph may admit two different cpeos, one 
    for which (M1) and (M2) hold and another for which they fail. This phenomenon is illustrated 
    by the following $3$-vertex example, where each vertex has a distinct color, the loops carry 
    distinct colors, the two edges incident to vertex~1 share the same color, and the remaining 
    edge has a different color. For one ordering of the vertex colors the graph is CER, whereas 
    for a different cpeo the $2$-path counts are no longer constant on extended-edge color classes.

\begin{center}
\begin{tikzpicture}[
  -, shorten >=1pt, auto, node distance=2.5cm, thick,
  main node/.style={circle, draw, font=\sffamily\Large\bfseries},
  every loop/.style={looseness=10, min distance=10mm, -}
]

\node[main node] (1) {1};
\node[main node] (2) [below right of=1] {2};
\node[main node] (3) [above right of=2] {3};

\path[every node/.style={font=\sffamily\small}]
    (1) edge[loop left]  node {Blue} (1)
    (2) edge[loop right] node {Red} (2)
    (3) edge[loop right] node {Green} (3)
    (1) edge              node[left] {Orange} (2)
    (1) edge              node[above]  {Orange} (3)
    (2) edge              node[right] {\ Purple}  (3);

\end{tikzpicture}

\end{center}
Since the graph is complete, any ordering of the color classes is a cpeo. However, the
$2$-path regularity conditions depend on the chosen ordering:
\begin{itemize}
    \item {Valid ordering:} $\eta = (\text{Blue}, \text{Red}, \text{Green})$, so
    $c(1)=1$, $c(2)=2$, $c(3)=3$ and $V_{\leq 1}=\{1\}$. Consider the two orange edges
    $\{1,2\}$ and $\{1,3\}$. For both of them we have
    \[
      V_{\leq c(1)\wedge c(2)} = V_{\leq 1}
      = V_{\leq c(1)\wedge c(3)} = V_{\leq 1} = \{1\},
    \]
    so the only possible intermediate vertex is $u=1$. Thus the $2$-paths through the orange
    edges have the same color patterns (orange-blue and blue-orange), and
    $m_{1\leftrightarrow 2} = m_{1\leftrightarrow 3}$; in particular (M1) holds.

    \item {Invalid ordering:} $\eta' = (\text{Red}, \text{Green}, \text{Blue})$, so
    $c(2)=1$, $c(3)=2$, $c(1)=3$, and hence
    \[
      V_{\leq c(1)\wedge c(2)} = V_{\leq 1} = \{2\}, \qquad
      V_{\leq c(1)\wedge c(3)} = V_{\leq 2} = \{2,3\}.
    \]
    For the orange edge $\{1,2\}$, every $2$-path must pass through vertex~2 (e.g.\ $1-2-2$
    or $2-2-1$). For the orange edge $\{1,3\}$, $2$-paths may pass through $2$ or $3$
    (e.g.\ $1-2-3$, $3-2-1$, $1-3-3$, $3-3-1$), so the
    corresponding $2$-path counts differ. In particular,
    $m_{1\leftrightarrow 2} \neq m_{1\leftrightarrow 3}$, and (M1) fails for $\eta'$.
 \end{itemize}
\end{itemize}

\begin{example}\label{exp:not_nice}
We now give an explicit example of a colored graph that is CER but not symmetric CER.
Consider the colored RCON space
\[
\mathcal{Z}_{\mathcal{G}}^{\mathcal{C}} = \left\{
    \begin{bmatrix}
        \alpha& a & c & d & c & b \\
        a & \alpha& b & c & d & c \\
        c & b & \alpha& a & c & d \\
        d & c & a & \alpha& b & c \\
        c & d & c & b & \alpha& a \\
        b & c & d & c & a & \alpha
    \end{bmatrix}\colon 
\alpha, a, b, c, d \in \mathbb{R} \right\}.
\]
The associated colored complete graph $(\mathcal{G},\mathcal{C})$ is a CER graph, but it fails to be symmetric CER.
\end{example}

Perhaps the most important subclass of CER graphs consists of decomposable graphs whose colorings arise from permutation subgroups; we study them in the next section.

\subsection{Decomposable RCOP models}\label{sec:decomp}
We say that a permutation subgroup $\Gamma$ on $V=\{1,\ldots,p\}$ is transitive if for any $i,j\in V$ there exists $\sigma\in\Gamma$ such that $\sigma(i)=j$.
Furthermore, a transitive subgroup $\Gamma$ is generously transitive if for any distinct $i,j\in V$ there exists $\sigma\in\Gamma$ such that $\sigma(i)=j$ and $\sigma(j)=i$.

More generally, for an arbitrary (not necessarily transitive) subgroup $\Gamma$ with orbit decomposition
$V=\cup_{i\in [r]}  V_i$, we say that $\Gamma$ is generously transitive if for every orbit $V_i$
the induced action $\Gamma|_{V_i}$ is generously transitive on $V_i$. We also note that cyclic subgroups are generally not generously transitive.

We begin with basic invariance properties of the $2$-path function for colorings induced by permutation subgroups.

\begin{lemma}\label{lem:GG}
Assume that $\Gamma\subset\mathrm{Aut}(G)$ and consider the corresponding colored graph $(G,\mathcal{C})$. Fix any ordering of vertex colors $(V_{\eta_1},\ldots,V_{\eta_r})$ for some permutation $\eta\in\mathfrak{S}_r$, and let $m$ be the $2$-path function computed with respect to this ordering. 
    \begin{itemize}
        \item[(i)] For any $\{v,w\}\in \tilde{E}$ and $\sigma\in\Gamma$,
        \[
      m_{v\to w} = m_{\sigma(v)\to \sigma(w)}.  
        \]
        \item[(ii)] If $\Gamma$ is generously transitive or cyclic, then 
        \[
        c(v)=c(w) \implies  m_{v\to w}\big|_{F\times F} = m_{w\to v}\big|_{F\times F}. 
        \]
    \end{itemize}
\end{lemma}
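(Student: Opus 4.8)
The plan is to prove (i) first as the fundamental invariance statement, and then derive (ii) from it, treating the generously transitive and cyclic cases separately.

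For (i), the key observation is that every $\sigma\in\Gamma$ acts as a symmetry of the colored graph: because the coloring is defined by the $\Gamma$-orbits on $\tilde{E}$, we have $c(\sigma(a),\sigma(b))=c(a,b)$ for every extended edge $\{a,b\}$, and in particular $c(\sigma(u))=c(u)$ for every vertex. Since the vertex-color classes are exactly the $\Gamma$-orbits on $V$, the automorphism $\sigma$ maps each $V_j$ onto itself, hence preserves every level set $V_{\le i}$, and the quantities $c(v)\wedge c(w)$ and $c(\sigma(v))\wedge c(\sigma(w))$ coincide. I would then check that $u\mapsto\sigma(u)$ restricts to a bijection between $\{u\in V_{\le c(v)\wedge c(w)}\colon c(v,u)=k,\ c(u,w)=h\}$ and the analogous set defining $m_{\sigma(v)\to\sigma(w)}(k,h)$: membership in the level set is preserved because $\sigma(V_{\le i})=V_{\le i}$, and the two color conditions are preserved because $\sigma$ preserves edge colors. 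Applying the same reasoning to $\sigma^{-1}\in\Gamma$ supplies the inverse map, so the two counts agree for all $(k,h)$, which is (i).

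For (ii) in the generously transitive case, the statement is immediate from (i): given $c(v)=c(w)=i$, generous transitivity of $\Gamma|_{V_i}$ furnishes $\sigma\in\Gamma$ with $\sigma(v)=w$ and $\sigma(w)=v$, and (i) then yields $m_{v\to w}=m_{\sigma(v)\to\sigma(w)}=m_{w\to v}$ as full functions, a fortiori on $F\times F$.

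The cyclic case is where the real work lies. Using Lemma~\ref{lem:edge_regularity_on_diagonal} (colors in $F$ never join distinct vertex-color classes), I may restrict the intermediate vertex to $V_i$, and it then suffices to prove the symmetry $m_{v\to w}(k,h)=m_{v\to w}(h,k)$ for $k,h\in F_i$, since $m_{w\to v}(k,h)=m_{v\to w}(h,k)$ by the identity already recorded in the text. Writing $\Gamma=\langle g\rangle$, the orbit $V_i$ carries a transitive cyclic action, so $g|_{V_i}$ is a single $n$-cycle with $n=|V_i|$; indexing $V_i=\{x_0,\ldots,x_{n-1}\}$ so that $g(x_j)=x_{j+1}$ (indices mod $n$) and using color-invariance under $g$, the color $c(x_a,x_b)$ depends only on the difference $b-a\in\mathbb{Z}/n$, say $c(x_a,x_b)=\gamma(b-a)$. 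With $v=x_a$, $w=x_b$ and $\delta=b-a$, the count $m_{v\to w}(k,h)$ becomes $|\{s\in\mathbb{Z}/n\colon \gamma(s)=k,\ \gamma(\delta-s)=h\}|$, and the involution $s\mapsto\delta-s$ on $\mathbb{Z}/n$ exchanges the two conditions, identifying this set with the one counting $m_{v\to w}(h,k)$. This yields the desired symmetry. The main obstacle is precisely this cyclic case: unlike the generously transitive case, no single $\sigma\in\Gamma$ need swap $v$ and $w$, so one cannot invoke (i) directly and must instead exploit the circulant structure of the edge-coloring on each orbit.
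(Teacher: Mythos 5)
Your proof is correct and takes essentially the same route as the paper's: part (i) via the color- and orbit-preserving change of variables $u\mapsto\sigma(u)$ (with $\sigma^{-1}(V_{\le i})=V_{\le i}$), the generously transitive case read off directly from (i), and the cyclic case by exploiting that the generator acts as a single $n$-cycle on each orbit, where your involution $s\mapsto\delta-s$ is exactly the paper's substitution $t\mapsto d-t$ (you merely route it through $m_{v\to w}(k,h)=m_{v\to w}(h,k)$ plus the swap identity instead of proving $m_{v\to w}=m_{w\to v}$ directly). One small repair: to restrict the intermediate vertex to $V_i$ you should not cite Lemma~\ref{lem:edge_regularity_on_diagonal}, which assumes the graph is CER (hence admits a cpeo, unavailable here since $G$ need not be decomposable); the correct justification is that RCOP colorings are edge-regular by the orbit structure --- any edge of color $k\in F_i$ lies in the $\Gamma$-orbit of an edge inside $V_i$, and $\Gamma$ preserves $V_i$ --- which is the fact the paper's own proof uses implicitly at the same step.
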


We are now ready to state the main result relating RCOP models to our new class of CER graphs.
\begin{lemma}\label{lem:RCOP}
Assume that $\Gamma\subset\mathrm{Aut}(G)$ and consider the corresponding colored graph $(G,\mathcal{C})$.
\begin{itemize}
\item[(i)] $(G,\mathcal{C})$ is $2$-path regular with respect to any ordering of the vertex color classes.
\item[(ii)] A cpeo can be constructed from any peo of $G$ as follows: \\
Let $(v_{\sigma_1},\ldots,v_{\sigma_p})$ be a peo of $G$, and set $\eta_1 = c(v_{\sigma_1})$. 
Given $\eta_1,\ldots,\eta_{k-1}$ for some $2 \le k \le r$, 
take the first element $v_k$ in the peo from the set $V \setminus \bigcup_{i=1}^{k-1} V_{\eta_i}$, and put $\eta_k = c(v_k)$.  The resulting ordered partition $(V_{\eta_1},\ldots,V_{\eta_r})$ is a cpeo.
\item[(iii)] If $G$ is decomposable, then $(G,\mathcal{C})$ is a CER graph with respect to any cpeo. 
\item[(iv)] If $G$ is decomposable and $\Gamma$ is generously transitive or cyclic, then $(G,\mathcal{C})$ is a symmetric CER graph with respect to any cpeo. 
\end{itemize}
\end{lemma}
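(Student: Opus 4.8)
The plan is to prove the four claims in order, establishing (i) and (ii) first and then assembling (iii) and (iv) quickly from them. The structural fact I would lean on throughout is that, since $\mathcal{C}$ is the orbit coloring of $\Gamma$ acting on $\tilde E$, each vertex color class $V_i$ is a single $\Gamma$-orbit. Consequently every $\sigma\in\Gamma\subset\mathrm{Aut}(G)$ fixes each $V_i$ setwise, hence fixes every union $W_i = V_{\eta_i}\cup\cdots\cup V_{\eta_r}$ setwise and restricts to an automorphism of the induced subgraph $G[W_i]$. This single observation is what lets transitivity within a color class interact cleanly with the elimination structure.

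For part (i) I would argue directly from Lemma \ref{lem:GG}(i). If $c(v,w)=c(v',w')$, then $\{v,w\}$ and $\{v',w'\}$ lie in the same $\Gamma$-orbit, so there is $\sigma\in\Gamma$ with $\{\sigma(v),\sigma(w)\}=\{v',w'\}$. In the case $\sigma(v)=v',\,\sigma(w)=w'$, Lemma \ref{lem:GG}(i) gives $m_{v\to w}=m_{v'\to w'}$, and symmetrizing yields $m_{v\leftrightarrow w}=m_{v'\leftrightarrow w'}$. In the case $\sigma(v)=w',\,\sigma(w)=v'$ it gives $m_{v\to w}=m_{w'\to v'}$, and since the symmetric $2$-path function is invariant under swapping its two endpoints (because $m_{v\to w}(h,k)=m_{w\to v}(k,h)$), again $m_{v\leftrightarrow w}=m_{w'\leftrightarrow v'}=m_{v'\leftrightarrow w'}$. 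Thus (M1) holds, and as the argument never refers to the chosen vertex-color ordering, $2$-path regularity holds with respect to \emph{every} ordering.

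Part (ii) is the technical heart, and the step I expect to require the most care. I would first show that the greedily chosen representative $v_i$ (the first peo-vertex lying in $W_i$) is simplicial in $G[W_i]$: writing $v_i=v_{\sigma_t}$, every vertex preceding it in the peo lies in $\bigcup_{j<i}V_{\eta_j}$ and hence outside $W_i$, so $W_i\subseteq\{v_{\sigma_t},\ldots,v_{\sigma_p}\}$; since $v_{\sigma_t}$ is simplicial in $G[\{v_{\sigma_t},\ldots,v_{\sigma_p}\}]$ by the peo property, and simpliciality passes to induced subgraphs containing the vertex, $v_i$ is simplicial in $G[W_i]$. I would then propagate this across the whole color class: for any other $v\in V_{\eta_i}=V_{c(v_i)}$, transitivity of $\Gamma$ on the orbit $V_{\eta_i}$ supplies $\sigma\in\Gamma$ with $\sigma(v_i)=v$, and since $\sigma$ restricts to an automorphism of $G[W_i]$ it carries the simplicial vertex $v_i$ to the simplicial vertex $v$. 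Hence every vertex of $V_{\eta_i}$ is simplicial in $G[W_i]$, which is exactly the cpeo condition. The delicate bookkeeping is ensuring $W_i$ equals the peo-tail used, so that the right induced subgraph is available before transitivity is invoked.

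Finally, parts (iii) and (iv) assemble the pieces. If $G$ is decomposable it admits a peo, so part (ii) produces a cpeo; given any cpeo, condition (i) of the CER definition holds by definition, while condition (ii) ($2$-path regularity) holds by part (i) of this lemma, yielding CER with respect to any cpeo. For (iv), the generously-transitive-or-cyclic hypothesis is precisely what Lemma \ref{lem:GG}(ii) requires in order to deliver (M2), namely $m_{v\to w}\big|_{F\times F}=m_{w\to v}\big|_{F\times F}$ whenever $c(v)=c(w)$; combining (M2) with the CER conclusion of (iii) gives symmetric CER with respect to any cpeo.
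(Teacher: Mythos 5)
Your proposal is correct and follows essentially the same route as the paper: part (i) is the identical two-case argument via Lemma \ref{lem:GG}(i), parts (iii) and (iv) are the same assembly from (i), (ii) and Lemma \ref{lem:GG}(ii). For part (ii) the paper argues iteratively (the first peo vertex is simplicial in the current graph, automorphisms propagate this across its color class, then remove that class and repeat), whereas you verify directly for each $i$ that $W_i=V_{\eta_i}\cup\cdots\cup V_{\eta_r}$ lies in the peo-tail starting at the representative $v_i$ and then propagate by transitivity of $\Gamma$ on the orbit $V_{\eta_i}$ — the same two ingredients, just phrased non-recursively and with more of the bookkeeping made explicit.
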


\subsection{Intersection algebra of diagonal blocks}\label{sec:intersection}

In this section we offer a new view on the properties of the diagonal blocks of matrix color spaces $\coloredZ$ defined in Section \ref{sec:matrix_subspaces}.

Recall that $c(v,w) = k \iff \{v,w\}\in \tilde{E}_k$.

Let \((G,\mathcal{C})\) be a CER graph.
Fix $i\in [r]$ and let $n_i=|V_i|$. Let $\mathcal{C}_i$ denote the coloring of $G[V_i]$ induced by $\mathcal{C}$.  Consider the corresponding diagonal block $\mathcal{Z}_i:=\mathcal{Z}_{G[V_i]}^{\mathcal{C}_i}$.
Let $F_i$ denote the set of extended-edge colors between vertices of color $i$, recall \eqref{eq:Fi}.  
For each $k\in F_i$, define the matrix $A_k^{(i)}\in\{0,1\}^{n_i\times n_i}$ by
\[
(A_k^{(i)})_{vw} =\begin{cases}
1, & \text{if }c(v,w) = k\\
0, & \text{otherwise},
\end{cases}
\]
for $v,w\in V_i$.

It is easy to see that the set $\{A_k^{(i)}\colon k\in F_i\}$ forms a basis of $\mathcal{Z}_i$.

By Proposition \ref{prop:T_diag_properties} (3), under (M1) for any $k,h\in F_i$, we have $[A_k^{(i)} A_h^{(i)} + A_h^{(i)} A_k^{(i)}]_{v,w} = m_{v\leftrightarrow w}(k,h)$. Under (M1), the function $\{v,w\}\mapsto m_{v\leftrightarrow w}(k,h)$ is constant on extended-edge color classes. If $c(v,w)=c$, denote this common value by $\eta_{kh}^c$. In this way, we obtain for $k,h\in F_i$,
\[
\frac12(A_k^{(i)} A_h^{(i)} + A_h^{(i)} A_k^{(i)})= \sum_{c\in F_i} \eta_{kh}^c A_c^{(i)}. 
\]
The $\eta_{kh}^c$ are called the intersection numbers. 
If $G[V_i]$ is a complete graph, then $\mathcal{Z}_i=\mathrm{span}\{A_k^{(i)}\colon k\in F\}$ is a Jordan algebra scheme in the sense of \cite{JordanScheme0,JordanScheme1}. 

Under both (M1) and (M2), we have $A_k^{(i)} A_h^{(i)}=A_h^{(i)} A_k^{(i)}$ (see Proposition \ref{prop:T_diag_properties} (4)).
If $G[V_i]$ is a complete graph, then $\mathrm{span}\{A_k\colon k\in F_i\}$ forms a Bose-Mesner algebra; otherwise, $\Zsp_i$ is a direct sum of Bose-Mesner algebras (cf. Corollary \ref{corol:cliques}).  Bose-Mesner algebras are special sets of matrices which arise from a combinatorial structure known as an association scheme, \cite{BM59}. The theory of association schemes arose in statistics, in the theory of experimental design for the analysis of variance. 

Assume (M1) and (M2). The space $\Zsp_i$ is commutative and therefore simultaneously diagonalizable. This implies that there exists a family $\{c_\alpha^{(i)}\}_{\alpha\in[d_i]}$ of mutually orthogonal projection matrices satisfying 
\[
\sum_{\alpha\in[d_i]} c_\alpha^{(i)} = I_{n_i},\quad c_\alpha^{(i)} c_\beta^{(i)} = \delta_{\alpha\beta} c_{\alpha}^{(i)}
\]
such that any $x\in \Zsp_i$ can be written as 
\[
x = \sum_{\alpha\in[d_i]} \lambda_\alpha c_\alpha^{(i)}
\]
for some $\lambda\in \R^{d_i}$. Note that we have $d_i=|F_i|$. 


One can find $\{c_\alpha^{(i)}\}_{\alpha\in[d_i]}$  by diagonalization of a matrix valued in $\Zsp_i$. The cost of such operation is $O(n_i^3)$. 
Below, we present a way to obtain the same objective with $O(|F_i|^3)$ complexity.  Typically, we have $|F_i|\ll n_i$.

We number the elements of $\{A_m\colon m\in F_i\}$ as $\{A_1^{(i)},\ldots,A_{d_i}^{(i)}\}$. Define $d_i\times d_i$ matrices $P^{(i)}$ and $Q^{(i)}$ (called the first and second eigenmatrix in association-scheme convention) by 
\begin{align}\label{eq:APQ}
A_m^{(i)} = \sum_{\alpha\in[d_i]} P_{m\alpha}^{(i)}\,c_\alpha^{(i)}\qquad\mbox{and}\qquad c_\alpha^{(i)} = \frac1{n_i}\sum_{m\in[d_i]} Q_{\alpha m}^{(i)} A_m^{(i)}.
\end{align}
Our aim is to find matrix $Q^{(i)}$. The elements of $Q^{(i)}$ are called Krein parameters.

Writing
\[
A_m^{(i)} = \frac1{n_i}  \sum_{\alpha\in[d_i]} \sum_{n\in[d_i]} P_{m\alpha}^{(i)} Q_{\alpha n}^{(i)} A_n^{(i)},
\]
we obtain
\[
\sum_{\alpha=1}^d P_{m\alpha}^{(i)} Q_{\alpha n}^{(i)} = n_i \,\delta_{mj},
\]
which is equivalent to  $Q^{(i)} P^{(i)} = P^{(i)} Q^{(i)} = n_i \,I_{d_i}$. 
Moreover,
\begin{align*}
A_n^{(i)} c_\alpha^{(i)} &=  A_n^{(i)} \left(\frac1{n_i}\sum_{m\in[d_i]} Q_{\alpha m}^{(i)} A_m^{(i)}\right) = 
\frac1{n_i}\sum_{m\in[d_i]} Q_{\alpha m}^{(i)} \sum_{\ell\in[d_i]} \eta_{mn}^\ell A_\ell^{(i)} \\
&=\frac1{n_i} \sum_{\ell\in[d_i]} \left(\sum_{m\in[d_i]} \eta_{nm}^\ell Q_{\alpha m}^{(i)}\right) A_\ell^{(i)}.
\end{align*}
On the other hand,
\[
A_n^{(i)} c_\alpha^{(i)} = \sum_{\beta\in[d_i]} P_{n\beta}^{(i)}c_\beta^{(i)} c_\alpha^{(i)} =  P_{n\alpha}^{(i)}c_\alpha^{(i)} = \frac1{n_i}\sum_{\ell\in[d_i]} P_{n\alpha}^{(i)} Q_{\alpha \ell}^{(i)} A_\ell^{(i)}, 
\]
which by comparing the $A_\ell^{(i)}$-coefficients implies that 
\begin{align}\label{eq:eigen}
\sum_{m\in[d_i]} \eta_{nm}^\ell Q_{\alpha m}^{(i)} = P_{n\alpha} ^{(i)}Q_{\alpha\ell}^{(i)},\qquad \alpha,\ell,n\in[d_i].
\end{align}
For $n\in [d_i]$ we define the $d_i\times d_i$ (intersection) matrix $L_n$ by
\[
(L_{n})_{\ell m} = \eta_{nm}^\ell.
\]
Note that $L_n^{(i)}$ is the matrix of the linear map $\Zsp_i\ni X \mapsto A_n^{(i)} X$ written in the basis $\mathrm{span}\{ A_k^{(i)}\colon k\in[d_i]\}$. 

Then, \eqref{eq:eigen} can be rewritten as 
\[
L_n^{(i)}\, q_\alpha^{(i)} = P_{n\alpha}^{(i)}\, q_\alpha^{(i)},\qquad \alpha,n\in[d_i],
\]
where $q_\alpha^{(i)} = (Q_{\alpha1}^{(i)},\ldots,Q_{\alpha d_i}^{(i)})^\top$. This implies that each $q_\alpha^{(i)}$ is an eigenvector of any matrix $L_n^{(i)}$ and the corresponding eigenvalue if $P_{n\alpha}^{(i)}$. Thus, $(L_n)_{n=1}^{d_i}$ are simultaneously diagonalizable, i.e., there exists a matrix $V^{(i)}$ such that 
\[
(V^{(i)})^{-1} L_n^{(i)} V^{(i)} = \mathrm{diag}(P_{n1}^{(i)},\ldots,P_{nd_i}^{(i)}). 
\]
The common eigenbasis $V^{(i)}$ (and the eigenvalue matrix $P^{(i)}$) can be obtained in $O(d_i^3)$-time by diagonalizing a generic linear combination
\[
L^{(i)}(x) = \sum_{n\in[d_i]} x_n L_n^{(i)},\qquad x\in \R^{d_i}. 
\]
where $x\in\R^{d_i}$. The eigenvalues of 
$L^{(i)}(x)$ are the linear forms 
\[
\lambda_\alpha(x) = \sum_{n\in[d_i]} x_n P_{n\alpha}^{(i)}.
\]
In order to find the common eigenbasis $V^{(i)}$ and $P^{(i)}$, we propose the Random Method.

\noindent\textbf{Random Method}\\
Assume the columns $(P_{n\alpha}^{(i)})_{n\in[d_i]}$ are pairwise distinct (the generic case). 
The set of parameters \(x\) for which \(L^{(i)}(x)\) has a repeated eigenvalue is the zero set of the discriminant of the characteristic polynomial of \(L^{(i)}(x)\); in particular it is a proper algebraic subset of \(\R^{d_i}\) and hence has Lebesgue measure zero.

Therefore, if \(x\) is sampled from any absolutely continuous  distribution on \(\R^{d_i}\), (e.g., $x\sim\mathrm{Unif}([0,1]^{d_i})$),  then \(L^{(i)}(x)\) has a simple spectrum with probability one. Diagonalizing \(L^{(i)}(x)\) yields a common eigenbasis \(V^{(i)}\) for \(\{L^{(i)}_n\}_{n\in[d_i]}\).

Note that the columns of \(V^{(i)}\) are proportional to the rows of \(Q^{(i)}\). Thus,
\[
(V^{(i)})^\top P^{(i)} = n_i\,\mathrm{diag}(\alpha_1,\ldots,\alpha_{d_i}),
\]
which implies
\[
Q^{(i)} = (V^{(i)})^\top \mathrm{diag}(\alpha_1,\ldots,\alpha_{d_i})^{-1}.
\]
Having determined \(Q^{(i)}\), we compute \(c_{\alpha}^{(i)}\) from the second equation of \eqref{eq:APQ}.

\section{General block matrix spaces admitting explicit gamma-like integrals}\label{sec:bcspace}

We now introduce Block-Cholesky spaces, a class of block-structured linear subspaces of $\mathrm{Sym}(p)$ that generalizes the construction in \cite{IshiEntropy}.

\begin{defin}\label{def:I-space}
Let $p\in\N$ and fix an ordered partition $(n_i)_{i\in[r]}\in \N^r$ of $p=\sum_{i\in[r]} n_i$. For a symmetric matrix \(x\in \mathrm{Sym}(p)\), we consider its block decomposition with respect to this partition as
\[
x =
\begin{bmatrix}
X_{11} & X_{12} & \cdots & X_{1r} \\
X_{21} & X_{22} & \cdots & X_{2r} \\
\vdots & \vdots & \ddots & \vdots \\
X_{r1} & X_{r2} & \cdots & X_{rr}
\end{bmatrix}, \,\, \mbox{with $X_{kh}\in\mathbb{R}^{n_k\times n_h}$ and $X_{hk} = X_{kh}^\top$ for all $1\le k\le h\le r$.}
\]

Given this block decomposition of $x\in \mathrm{Sym}(p)$, we define block lower triangular matrix $\tri(x)$ and block diagonal matrix $\bdiag(x)$ by
\[
\tri(x)=
\begin{bmatrix}
X_{11} & 0 & \cdots & 0 \\
X_{21} & X_{22} & \cdots & 0 \\
\vdots & \vdots & \ddots & \vdots \\
X_{r1} & X_{r2} & \cdots & X_{rr}
\end{bmatrix},\qquad 
\bdiag(x)=
\begin{bmatrix}
X_{11} & 0 & \cdots & 0 \\
0 & X_{22} & \cdots & 0 \\
\vdots & \vdots & \ddots & \vdots \\
0 & 0 & \cdots & X_{rr}
\end{bmatrix},
\]
respectively.

Let $\mathcal{Z}$ be a linear subspace of $\mathrm{Sym}(p)$, structured according to the block decomposition induced by partition $(n_i)_{i\in[r]}$. 
For $i \in[r]$, we define the subspaces
 \begin{align*}
 M_i(\mathcal{Z}) &= \{x \in \mathcal{Z}\colon X_{kh} = 0 \mbox{ unless }(k,h) = (i,i)\},\\
 L_i(\mathcal{Z}) & = \{x \in \mathcal{Z}\colon X_{kh} = 0 \mbox{ unless } k > h = i \mbox{ or } h > k = i \}.
\end{align*}
We say that $\mathcal{Z}$ is a Block-Cholesky space (\BC-space), if the following conditions hold:
\begin{enumerate}
\item[(Z0)] $I_p \in \mathcal{Z}$ and
$\mathcal{Z} 
= \Bigl( \bigoplus_{i\in[r]} M_i(\mathcal{Z}) \Bigr) \oplus 
  \Bigl( \bigoplus_{i\in[r-1]} L_i(\mathcal{Z}) \Bigr)$,
    \item[(Z1)] $\tri(x)\tri(x)^\top\in\mathcal{Z}$ for all $x\in \mathcal{Z}$.
\end{enumerate}
If, in addition, 
\begin{enumerate}
    \item[(Z2)] $\bdiag(x) \bdiag(y)\in\mathcal{Z}$ for all $x,y\in\mathcal{Z}$,
\end{enumerate}
then $\mathcal{Z}$ is called a Diagonally Commutative Block-Cholesky space (\cBC-space).
\end{defin}
The integer $r$ is called the rank of the \BC-space. 

For each $i$, $M_i$ keeps only the $i$th diagonal block. Our $L_i$ keeps the off-diagonal blocks $(i,j)$ and $(j,i)$  with $j>i$ and zeros everything else. In particular, for $r=3$, $L_1$ retains the blocks $(1,2),(2,1),(1,3),(3,1)$ and $L_2$ retains only the blocks $(2,3),(3,2)$.

\begin{thm}\label{thm:ZGCisNice} 
Let $\mathcal{C}=(\mathcal{V},\mathcal{E})$ be a coloring of $G=(V,E)$. Suppose that there exists a color perfect elimination ordering $(V_{\eta_1},\ldots, V_{\eta_r})$ for some $\eta\in\mathfrak{S}_r$. Write $n_i = |V_{\eta_i}|$, $i\in [r]$, and relabel the vertices so that for each $i$,
\[
V_{\eta_i} = \left\{N_{i-1}+1,\ldots, N_{i}\right\},\qquad N_i = \sum_{k\in[i]} n_k,\qquad N_0=0.
\]
Define the space $\coloredZ$ with respect to this vertex-numbering.
\begin{itemize}
    \item[(i)] If $(G,\mathcal{C})$ is a CER graph, then $\mathcal{Z}_G^{\mathcal{C}}$ is a \BC-space. 
    \item[(ii)] If $(G,\mathcal{C})$ is a symmetric CER graph, then $\mathcal{Z}_G^{\mathcal{C}}$ is a \cBC-space. 
\end{itemize}    
\end{thm}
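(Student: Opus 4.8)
The plan is to verify the defining conditions (Z0), (Z1) and, for part~(ii), (Z2) directly, after reducing each to an entrywise statement about the $2$-path functions $m_{v\to w}$ and $m_{v\leftrightarrow w}$. Writing $x\in\coloredZ$ in the form $x_{vu}=\xi_{c(v,u)}$ (with $\xi_0=0$ on non-edges), the single computation that drives everything is the block-triangular product: from the entrywise description $\tri(x)_{vu}=x_{vu}\,\mathbf{1}[c(v)\ge c(u)]$ one obtains
\[
(\tri(x)\tri(x)^\top)_{vw}=\sum_{u\in V_{\le c(v)\wedge c(w)}}x_{vu}x_{uw}=\tfrac12\sum_{k,h}\xi_k\xi_h\,m_{v\leftrightarrow w}(k,h),
\]
where the last equality groups the intermediate vertices $u$ according to the color pair $(c(v,u),c(u,w))$ and symmetrizes the resulting double sum in $(k,h)$. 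A completely analogous, and simpler, computation handles $\bdiag(x)\bdiag(y)$, whose $i$-th diagonal block is $X_{ii}Y_{ii}$ and whose off-diagonal blocks vanish.

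For (Z0) I would first note that $I_p\in\coloredZ$, since the identity has equal (unit) diagonal entries and vanishing off-diagonal entries, so it trivially respects both the vertex-color and edge-color equalities. The substantive part is the direct-sum decomposition, which amounts to showing that every extended-edge color class is \emph{localized} to a single block group, i.e.\ it lies entirely inside one diagonal block $(i,i)$ or entirely inside one off-diagonal group $L_i$ (equivalently: no color mixes within-class and between-class edges, and every off-diagonal color has a constant minimal endpoint color). This I would extract from (M1) by probing with loop colors: evaluating the identity $m_{v\leftrightarrow w}=m_{v'\leftrightarrow w'}$ at the color pair $(\mathrm{loop}_i,t)$, where $\mathrm{loop}_i$ is the vertex color of $V_i$, forces the multiplicity of the path using the loop at the minimal endpoint to agree across the color class; since by Lemma~\ref{lem:edge_regularity_on_diagonal} a loop color touches only its own class, this pins down the minimal endpoint color of $t$ and separates within-class from between-class colors. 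Once localization holds, zeroing all blocks outside a given group preserves membership in $\coloredZ$, which yields the claimed direct sum.

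Condition (Z1) is then immediate from the displayed computation. For the \emph{zero pattern}, suppose $\{v,w\}\notin\tilde E$ with $v\neq w$; any nonzero term in $\sum_{u}x_{vu}x_{uw}$ would require $u\sim v$ and $u\sim w$ with $c(u)\le c(v)\wedge c(w)$, but then the cpeo chordality condition (with $u$ as the minimal-color vertex) forces $v\sim w$, a contradiction, while the degenerate terms $u=v$ and $u=w$ vanish because $x_{vw}=0$. Hence $(\tri(x)\tri(x)^\top)_{vw}=0$ off the graph. For the \emph{equality constraints}, if $c(v,w)=c(v',w')$ then $m_{v\leftrightarrow w}=m_{v'\leftrightarrow w'}$ by (M1), so the two entries coincide; the diagonal case is identical with $v=w$. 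Thus $\tri(x)\tri(x)^\top\in\coloredZ$, proving~(i). This is precisely where decomposability enters: the $4$-cycle example shows that (M1)--(M2) alone cannot enforce the off-diagonal zeros, so the cpeo is indispensable here.

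For part~(ii) it remains to check (Z2). Since $\bdiag(x)\bdiag(y)$ is block diagonal with $i$-th block $X_{ii}Y_{ii}$ and $X_{ii},Y_{ii}\in\mathcal{Z}_i$, I would invoke Proposition~\ref{prop:T_diag_properties}(4): under (M1) and (M2) the generators $A_k^{(i)}$ commute, so $\mathcal{Z}_i$ is a commutative matrix algebra (a direct sum of Bose--Mesner algebras). Commutativity makes $X_{ii}Y_{ii}$ symmetric, and closure of $\mathcal{Z}_i$ under products, via the intersection-number expansion $A_k^{(i)}A_h^{(i)}=\sum_c\eta_{kh}^c A_c^{(i)}$, keeps it in $\mathcal{Z}_i$; assembling over $i$ places the product in $\bigoplus_{i\in[r]}M_i(\coloredZ)\subseteq\coloredZ$ by (Z0). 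The main obstacle throughout is the clean separation of the two inputs feeding (Z0) and (Z1): the algebraic input (M1)/(M2), which governs the equality constraints and the commutative diagonal algebra, versus the purely combinatorial input (cpeo), which alone guarantees the off-diagonal zeros. Keeping these two roles disentangled, and recognizing that the entire analytic content collapses into the single $2$-path identity above, is the crux of the argument.
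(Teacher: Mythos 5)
Your proposal is correct and follows essentially the same route as the paper's proof: your displayed product computation is Proposition~\ref{pro:m} combined with polarization, your loop-color probing for the direct sum in (Z0) reproduces Lemmas~\ref{lem:edge_regularity_on_diagonal} and~\ref{lem:L_i}, your (Z1) argument (cpeo chordality for the zero pattern, (M1) for the equality constraints) is Lemma~\ref{lem:Z1}, and your (Z2) argument via (M2)-commutativity and the intersection-number expansion is the content of Proposition~\ref{prop:T_diag_properties}(4) and Lemma~\ref{lem:Z2}. The only cosmetic difference is that you work with a general $x\in\coloredZ$ and symmetrize coefficients $\xi_k\xi_h$, whereas the paper reduces to the basis matrices $J^k$ first; these are equivalent.
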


In view of Lemma \ref{lem:RCOP}, we immediately obtain the following result. 
\begin{corol}
Assume that $G$ is decomposable and $\Gamma$ is a subgroup of $\mathrm{Aut}(G)$.  Then, there exists a cpeo. Relabel vertices in $V$ according to this cpeo as in Theorem \ref{thm:ZGCisNice}. Then, $\mathcal{Z}_G^\Gamma$ is a \BC-space. 

If additionally $\Gamma$ is generously transitive on its $V$-orbits, then $\mathcal{Z}_G^\Gamma$ is a \cBC-space.
\end{corol}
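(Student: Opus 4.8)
The plan is to chain Lemma~\ref{lem:RCOP} into Theorem~\ref{thm:ZGCisNice}, using the fact recorded in Section~\ref{sec:matrix_subspaces} that when $\mathcal{C}$ is the coloring given by the orbits of $\Gamma$ one has $\mathcal{Z}_G^\Gamma = \mathcal{Z}_G^{\mathcal{C}}$. First I would use decomposability of $G$, which is equivalent to the existence of a perfect elimination ordering $(v_{\sigma_1},\ldots,v_{\sigma_p})$. Feeding this peo into the explicit construction of Lemma~\ref{lem:RCOP}(ii) produces an ordered partition $(V_{\eta_1},\ldots,V_{\eta_r})$ of the orbit (color) classes that is a cpeo; this already establishes the existence claim of the corollary. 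I would then relabel the vertices as prescribed in Theorem~\ref{thm:ZGCisNice}, setting $V_{\eta_i}=\{N_{i-1}+1,\ldots,N_i\}$, and define $\mathcal{Z}_G^{\mathcal{C}}$ with respect to the resulting block partition.

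With this cpeo fixed, the hypotheses of Theorem~\ref{thm:ZGCisNice} follow from the remaining parts of Lemma~\ref{lem:RCOP}. By Lemma~\ref{lem:RCOP}(i) the RCOP coloring is $2$-path regular with respect to any ordering of the color classes, hence in particular with respect to the cpeo constructed above; together with the cpeo this is precisely the CER property, as stated in Lemma~\ref{lem:RCOP}(iii). Theorem~\ref{thm:ZGCisNice}(i) then gives directly that $\mathcal{Z}_G^\Gamma = \mathcal{Z}_G^{\mathcal{C}}$ is a \BC-space. For the second assertion, the extra hypothesis that $\Gamma$ is generously transitive on each of its $V$-orbits is exactly what Lemma~\ref{lem:RCOP}(iv) needs to upgrade $(G,\mathcal{C})$ to a \emph{symmetric} CER graph with respect to the same cpeo; feeding this into Theorem~\ref{thm:ZGCisNice}(ii) yields that $\mathcal{Z}_G^\Gamma$ is a \cBC-space.

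Since every step is a direct appeal to an already-established result, I do not anticipate a genuine mathematical obstacle — the statement is essentially a transcription of Lemma~\ref{lem:RCOP} through Theorem~\ref{thm:ZGCisNice}. The only points demanding care are bookkeeping: confirming that the coloring to which both results are applied is indeed the one induced by the $\Gamma$-orbits, and that the vertex relabeling required by Theorem~\ref{thm:ZGCisNice} is compatible with (and induced by) the cpeo coming out of Lemma~\ref{lem:RCOP}(ii). Both are purely notational and leave the subspace $\mathcal{Z}_G^\Gamma$ itself unchanged.
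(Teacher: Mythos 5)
Your proposal is correct and follows exactly the paper's route: the paper derives this corollary immediately by combining Lemma~\ref{lem:RCOP} (parts (ii)--(iv), which produce the cpeo from a peo and establish the CER/symmetric CER property for the orbit coloring) with Theorem~\ref{thm:ZGCisNice}, which is precisely your chain of reasoning. Your bookkeeping remarks about $\mathcal{Z}_G^\Gamma=\mathcal{Z}_G^{\mathcal{C}}$ and the relabeling are the same implicit identifications the paper relies on.
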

We also have the following easy result. 
\begin{corol} \label{lemma:commutativity}
Assume that $\mathcal{Z}$ satisfies (Z2). Then, for $x, y \in \mathcal{Z}$, one has 
\[
\bdiag(x) \cdot \bdiag(y) = \bdiag(y) \cdot \bdiag(x).
\]
\end{corol}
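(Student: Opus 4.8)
The plan is to exploit two facts simultaneously: that $\mathcal{Z}$ is by definition a subspace of the symmetric matrices $\mathrm{Sym}(p)$, and that (Z2) keeps the product of two block-diagonal elements inside $\mathcal{Z}$. The key realization is that symmetry of a matrix product, combined with symmetry of its two factors, already forces commutativity.

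First I would record that $\bdiag(x)$ and $\bdiag(y)$ are themselves symmetric matrices. Indeed, for any $x\in\mathcal{Z}\subset\mathrm{Sym}(p)$ the block relation $X_{hk}=X_{kh}^\top$ forces each diagonal block $X_{ii}$ to satisfy $X_{ii}^\top=X_{ii}$; since $\bdiag(x)$ is the block-diagonal matrix with these symmetric blocks on its diagonal and zeros elsewhere, it is symmetric. The same argument applies to $\bdiag(y)$.

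Next I would invoke (Z2) directly: the product $\bdiag(x)\,\bdiag(y)$ lies in $\mathcal{Z}$. Because $\mathcal{Z}\subset\mathrm{Sym}(p)$, this product is therefore a symmetric matrix, i.e.
\[
\bigl(\bdiag(x)\,\bdiag(y)\bigr)^{\top} = \bdiag(x)\,\bdiag(y).
\]
On the other hand, transposing the product and using the symmetry of each factor established above gives
\[
\bigl(\bdiag(x)\,\bdiag(y)\bigr)^{\top}
= \bdiag(y)^{\top}\,\bdiag(x)^{\top}
= \bdiag(y)\,\bdiag(x).
\]
Comparing the two displays yields $\bdiag(x)\,\bdiag(y)=\bdiag(y)\,\bdiag(x)$, which is the claim.

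There is essentially no serious obstacle here; the entire content is the elementary linear-algebra fact that two symmetric matrices whose product happens to be symmetric must commute. The only point requiring care is the very first step, namely checking that $\bdiag(\cdot)$ applied to a symmetric matrix is again symmetric, which relies on the off-diagonal blocks being discarded and the diagonal blocks inheriting symmetry from $x$. Once that is in place, the conclusion is immediate from (Z2) together with the ambient symmetry of $\mathcal{Z}$.
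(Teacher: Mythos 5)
Your proof is correct: the symmetry of the diagonal blocks, the closure under products guaranteed by (Z2), and the ambient symmetry of $\mathcal{Z}\subset\mathrm{Sym}(p)$ combine exactly as you describe to force commutativity. The paper states this corollary as an ``easy result'' without writing out a proof, and your argument is precisely the intended one, so there is nothing to add.
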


\begin{example}\label{ex:notgraph} There are \BC- and \cBC-spaces, which cannot be written in the form of $\mathcal{Z}_G^{\mathcal{C}}$ for any colored graph $(G,\mathcal{C})$. 
One checks that the following space satisfies (Z0), (Z1) and (Z2),
\[
\mathcal{Z} =
\left\{
\left[
\begin{array}{cc|cc}
 a & 0 & b & c \\ 
0 & a & -c & b \\
\hline
b & -c & d & e \\
c & b & e & d 
\end{array}
\right]\colon 
a, b,c,d,e \in \R\right\},
\]
with $r=2$, $n_1 = n_2 = 2$ and $p=n_1+n_2=4$. 
\end{example}

\begin{example}\label{ex:Z1}
Recall the linear subspace $\Zsp= \Zsp_{G}^{\mathcal{C}}$
    from Example \ref{exp:not_nice}. 
It satisfies (Z0) and (Z1), but not (Z2).
Here we have $r = 1$ and $n_1 = p = 6$.
\end{example}

\subsection{Structure constants of \texorpdfstring{$\mathcal{Z}$}{Z}}\label{sec:structure_constants}

Assume that $\mathcal{Z}$ is a \BC-space. On $\mathcal{Z}$ we consider the Lebesgue measure $\dd x$ normalized with respect to the trace inner product.
Our aim is to find explicit formulas for a gamma-like integral on $\Zsp$, i.e. integral of the form
\[
\int_{\mathcal{Z} \cap \mathrm{Sym}^+(p)}(\det x)^s  e^{- \tr(Ax) } \,\dd x,
\]
where $s\in\R$ and $A\in\mathrm{Sym}^+(p)$. Our formulas (see Theorem \ref{thm:integral}) hold on any $\BC$-space and they depend on objects that we collectively call structure constants. Obtaining these structure constants in general is a difficult task. However, if $\Zsp=\coloredZ$ is a \cBC-space (i.e. $(G,\mathcal{C})$ is a symmetric CER graph), then we offer an efficient method for calculating all necessary ingredients. 

 \begin{remark}\label{rem:Leb} 
 Every $x\in\mathcal{Z}$ can be written  uniquely as 
\[
x = \sum_{k} \alpha_k J_k,
\]
where $(J_k)_k$ is an orthonormal basis of $\mathcal{Z}$. Then we put
\[
\dd x = \prod_k \dd \alpha_k.
\]
In the case of an uncolored graph $\mathcal{Z}=\mathcal{Z}_G$, this implies that, $x=(k_{ij})_{i,j\in V}$,
\[
\dd x = 2^{-|E|/2} \prod_{v\in V} \dd k_{vv}\prod_{\{i,j\}\in E} \dd k_{ij},
\]
which differs by the multiplicative constant $2^{-|E|/2}$ from the usual normalization \cite{A-KM05, ULR18, MML23}. 

Consider now a colored space $\mathcal{Z}=\coloredZ$ with a single vertex color and a single edge color. Then 
\[
(J_1,J_2) = \left(\frac{1}{\sqrt{p}} I_p, \frac{1}{\sqrt{2|E|}}A\right),
\]
where $A$ is the adjacency matrix of $G=(V,E)$, is an orthonormal basis of $\coloredZ$. Then, 
\[
x = ( k_{ij})_{i,j\in V} = k_{1,1}\sqrt{p} J_1 + k_{1,2} \sqrt{2|E|} J_2, 
\]
which implies that 
\[
\dd x = \sqrt{2 p |E|} \dd k_{1,1} \dd k_{1,2}. 
\]
We note, however, that the ultimate goal is to compute ratios of the normalizing constants, causing these normalizations to eventually cancel out.
\end{remark}

Then each $M_i(\mathcal{Z})$ is a subalgebra of the Euclidean Jordan algebra 
 $\mathrm{Sym}(p)$. Indeed, (Z1) implies that $M_i(\mathcal{Z})$ is closed under the Jordan product $x\circ y=(xy+yx)/2$. 
Let $d_i$ be the rank of $M_i(\mathcal{Z})$ as a Jordan algebra,
 and $\{ c^{(i)}_{\alpha} \}_{\alpha\in[d_i]}$ be a Jordan frame of $M_i(\mathcal{Z})$,
 that is, a maximal family of mutually orthogonal projection matrices in $M_i(\mathcal{Z})$, i.e.,
 \[
 \sum_{\alpha\in[d_i]} c_\alpha^{(i)}= e^{(i)}\qquad\mbox{and}\qquad c_\alpha^{(i)}c_\beta^{(i)}=\delta_{\alpha\beta} c_\alpha^{(i)},
 \]
 where $e^{(i)}$ is the identity element of $M_i(\mathcal{Z})$: the $(i,i)$-block component of $e^{(i)}$ is the identity matrix $I_{n_i}$,
 and the other block components are zero.

 \begin{remark}
In Section~\ref{sec:intersection}, the Jordan frame elements $c_\alpha^{(i)}$
were defined as matrices in $\mathcal{Z}_i\subset \mathrm{Sym}(n_i)$. Throughout this section,
to simplify the notation for $p\times p$ matrices,
we identify $c_\alpha^{(i)}$ with its canonical embedding into
$\mathrm{Sym}(p)$: the $p \times p$ matrix whose $(i,i)$-block is
the original $n_i \times n_i$ idempotent and all other blocks are zero.
\end{remark}
 
Let $\mu_{i,\alpha}$ be the rank of the projection matrix
 $c^{(i)}_\alpha \in \mathrm{Sym}(p)$.
Put $\tilde{e}^{(i)}_\alpha :=  c^{(i)}_\alpha/\sqrt{\mu_{i,\alpha}}$
 so that $\| \tilde{e}^{(i)}_\alpha \|_2 = 1$.
Putting $c^{(i)}_{>\alpha} := \sum_{\beta > \alpha} c^{(i)}_\beta$,
 we take an orthonormal basis $\{ \tilde{f}^{(i)}_\gamma \}_{\gamma \in J_{i,\alpha}}$
 of a linear space
 \begin{align}\label{eq:defhh}
    \hh_{i,\alpha} := c^{(i)}_{>\alpha} M_i(\mathcal{Z}) c^{(i)}_\alpha 
 \oplus L_i(\mathcal{Z}) c^{(i)}_\alpha \subset \R^{p \times p} 
  \end{align} 
 with respect to the trace inner product,
 where $J_{i,\alpha}$ is an index set, which can be empty.

 Under (Z2), by commutativity, we have $ c^{(i)}_{>\alpha} M_i(\mathcal{Z}) c^{(i)}_\alpha =  c^{(i)}_{>\alpha} c^{(i)}_\alpha  M_i(\mathcal{Z}) =\{0\}$, which implies that under (Z2) we have
 \begin{align}\label{eq:defhh_simple}
    \hh_{i,\alpha} = L_i(\mathcal{Z}) c^{(i)}_\alpha.
  \end{align} 
 
Let $m_{i,\alpha} := | J_{i,\alpha} | = \dim \hh_{i,\alpha}$. 
For $A \in \mathrm{Sym}(p)$,
 define 
 $\lambda_{i,\alpha}(A)
 := \tr(A \tilde{e}^{(i)}_{\alpha} \tilde{e}^{(i)}_{\alpha})$.
If $m_{i,\alpha} > 0$, 
 define $v_{i,\alpha}(A) \in \R^{1 \times m_{i,\alpha} }$
 and $\psi_{i,\alpha}(A) \in \mathrm{Sym}(m_{i,\alpha})$ by
 $(v_{i,\alpha})_{1\gamma}
:= \tr (A \tilde{e}^{(i)}_{\alpha}  \transp{(\tilde{f}^{(i)}_\gamma)} )$
and
 $\psi_{i,\alpha}(A)_{\gamma \gamma'}
:= \tr\, (A\tilde{f}^{(i)}_{\gamma} \transp{ (\tilde{f}^{(i)}_{\gamma'}) })$
 respectively for $\gamma, \gamma' \in J_{i,\alpha}$.
Finally, 
 define $\phi_{i,\alpha}(A) \in \mathrm{Sym}(1 + m_{i,\alpha})$ by
\[
\phi_{i,\alpha}(A) := \begin{cases} 
\begin{bmatrix} \lambda_{i,\alpha}(A) & v_{i,\alpha}(A) \\
 \transp{v_{i,\alpha}(A)} & \psi_{i,\alpha}(A) \end{bmatrix}  & \mbox{if } m_{i,\alpha} >0,\\
\lambda_{i,\alpha}(A) & \mbox{if } m_{i,\alpha} = 0. 
\end{cases}
\]

\begin{example} 
 
In Example \ref{ex:notgraph}, we have $r=2$, $d_1=1$, $d_2=2$, $\mu_{1,1}=2$, $\mu_{2,1}=\mu_{2,2}=1$ and
\begin{gather*}
\tilde{e}^{(1)}_1 = \frac{1}{\sqrt{2}} \begin{bmatrix} 
1 & 0 & 0 & 0 \\ 0 & 1 & 0 & 0 \\ 0 & 0 & 0 & 0 \\ 0 & 0 & 0 & 0
\end{bmatrix}, \quad
\tilde{e}^{(2)}_1 = \frac{1}{2} \begin{bmatrix} 
0 & 0 & 0 & 0 \\ 0 & 0 & 0 & 0 \\ 0 & 0 & 1 & 1 \\ 0 & 0 & 1 & 1
\end{bmatrix}, \quad
\tilde{e}^{(2)}_2 = \frac{1}{2} \begin{bmatrix} 
0 & 0 & 0 & 0 \\ 0 & 0 & 0 & 0 \\ 0 & 0 & 1 & -1 \\ 0 & 0 & -1 & 1
\end{bmatrix}
\end{gather*}
Moreover,
\[
\tilde{f}^{(1)}_1 = \frac{1}{\sqrt{2}} \begin{bmatrix} 
0 & 0 & 0 & 0 \\ 
0 & 0 & 0 & 0 \\ 
1 & 0 & 0 & 0 \\ 
0 & 1 & 0 & 0
\end{bmatrix}, \quad
\tilde{f}^{(1)}_2 = \frac{1}{\sqrt{2}} \begin{bmatrix} 
0 & 0 & 0 & 0 \\ 
0 & 0 & 0 & 0 \\ 
0 & -1 & 0 & 0 \\ 
1 & 0 & 0 & 0
\end{bmatrix}
\]
forms an orthonormal basis of $\hh_{1,1}=L_1(\mathcal{Z})c_1^{(1)}$, while $\hh_{2,\alpha}=\{0\}$ so that $J_{2,\alpha}=\emptyset$ for $\alpha=1,2$.
Thus, 
$m_{1,1}=2$ and $m_{2,1}=m_{2,2}=0$.

With $A=(a_{ij})_{i,j}\in\mathrm{Sym}(4)$, we have 
\[
\phi_{1,1}(A) =   \frac12 \begin{bmatrix}
  a_{11}+a_{22} & a_{13}+a_{24} &  a_{14}-a_{23} \\ 
  a_{13}+a_{24} & a_{33}+a_{44} & 0 \\
  a_{14}-a_{23} & 0 & a_{33}+a_{44}
\end{bmatrix},\quad \begin{array}{ll} \phi_{2,1}(A) = \begin{bmatrix}\frac{a_{33}+a_{44}}{2}+a_{34}\end{bmatrix},\vspace{1mm}\\ \vspace{1mm} \phi_{2,2}(A) = \begin{bmatrix}\frac{a_{33}+a_{44}}{2}-a_{34}\end{bmatrix}.\end{array}
\]
\end{example}

\begin{defin}
    The structure constants of \BC-space $\mathcal{Z}$ are $(d_i, (\mu_{i,\alpha}, m_{i,\alpha})_{\alpha\in[d_i]})_{i\in[r]}$, where $d_i$ is the Jordan rank of $M_i(\mathcal{Z})$, $\mu_{i,\alpha}$ is the rank of the projection $c_{\alpha}^{(i)}$ and $m_{i,\alpha}$ is the dimension of the linear space $\hh_{i,\alpha}$ defined in \eqref{eq:defhh}.
\end{defin}

\subsection{Generalized Cholesky decomposition}\label{sec:gen_cholesky}

We now introduce a generalized Cholesky decomposition on $\mathcal{P}=\Zsp\cap\mathrm{Sym}^+(p)$.

Recall that the linear space $\hh_{i,\alpha} \subset \R^{p \times p}$ is
 spanned by $\{ \tilde{f}^{(i)}_\gamma \}_{\gamma \in J_{i,\alpha}}$.
Let 
\[
\hh=\bigoplus_{i\in[r]} \bigoplus_{\alpha\in[d_i]}(\R c^{(i)}_\alpha \oplus \hh_{i,\alpha}),
\]
 and
 $\hh^+$ a subset of $\hh$ consisting of elements $T$ 
 of the form
$T = \sum_{i\in[r]} \sum_{\alpha\in[d_i]} (t_{i,\alpha} \tilde{e}^{(i)}_\alpha + \sum_{\gamma \in J_{i,\alpha}} \tau_{i,\gamma} \tilde{f}^{(i)}_\gamma )$
 with $t_{i,\alpha}>0$ and $\tau_{i,\gamma} \in \R$.

\begin{remark}\label{rem:Tri+}
Let \(\tri(\mathcal{Z})^{+}\) denote the cone of block lower triangular matrices whose diagonal blocks are positive definite. 

In general, if at least one of the Jordan algebras $M_i(\mathcal{Z})$ is non-commutative, then 
$\tri(\mathcal{Z})^+$ and $\hh^+$ are distinct cones in $\R^{p\times p}$. The cone $\hh^+$ is adapted to the Peirce decomposition associated with the Jordan frames $\{c^{(i)}_\alpha\}$. In an appropriate orthonormal basis of $\R^p$, elements of $\hh^+$ can be made genuinely lower triangular, while elements of $\tri(\mathcal{Z})^+$ are in general only block lower triangular. 

Under (Z2), each $M_i(\mathcal{Z})$ is a commutative Euclidean Jordan algebra, and the two cones $\tri(\mathcal{Z})^+$ and $\hh^+$ coincide. 

In Theorem \ref{thm:group} we find necessary and sufficient condition under which $\tri(\coloredZ)^+$ forms a group. 
\end{remark}

\begin{example}
In the Example \ref{ex:notgraph}, we have
\[
    \sum_{i\in[r]} \sum_{\alpha\in[d_i]} (t_{i,\alpha} \tilde{e}^{(i)}_\alpha + \sum_{\gamma \in J_{i,\alpha}} \tau_{i,\gamma} \tilde{f}^{(i)}_\gamma )=\begin{bmatrix}
        \frac{t_{1,1}}{\sqrt{2}} & 0 & 0 & 0 \\
        0 & \frac{t_{1,1}}{\sqrt{2}} & 0 & 0 \\
        \frac{\tau_{1,1}}{2} & - \frac{\tau_{1,2}}{2} & \frac{t_{2,1}+t_{2,2}}{2} &\frac{t_{2,1}-t_{2,2}}{2} \\
     \frac{\tau_{1,2}}{2} & - \frac{\tau_{1,1}}{2} & \frac{t_{2,1}-t_{2,2}}{2} &\frac{t_{2,1}+t_{2,2}}{2} 
    \end{bmatrix}\in\hh.
\]
\end{example}

\begin{thm}\label{thm:new_Cholesky}
    Assume (Z0) and (Z1).  
Let $x \in \mathcal{Z} \cap \mathrm{Sym}^+(p)$.

Then, there exists a unique $T \in \hh^+$ for which $x = T \transp{T}$.
Writing this element as
\[
T = \sum_{i\in[r]} \sum_{\alpha\in[d_i]} (t_{i,\alpha} \tilde{e}^{(i)}_\alpha + \sum_{\gamma \in J_{i,\alpha}} \tau_{i,\gamma} \tilde{f}^{(i)}_\gamma ),
\]
we have 
\begin{enumerate}
    \item[(i)]    
\begin{align*}
  \det x
 = \prod_{i\in[r]} \prod_{\alpha\in[d_i]} 
 \Bigl( \frac{t_{i,\alpha}^2} {\mu_{i,\alpha} } \Bigr)^{\mu_{i,\alpha}};
    \end{align*}
    \item[(ii)] for $A\in\mathrm{Sym}(p)$, 
\begin{equation*}
 \tr(xA) =
\sum_{i\in[r]} \sum_{\alpha\in[d_i]} 
\begin{bmatrix} t_{i,\alpha} & (\tau_{\alpha}^{(i)})^\top \end{bmatrix}
\phi_{i,\alpha}(A)
\begin{bmatrix} t_{i,\alpha} \\ \tau_{\alpha}^{(i)} \end{bmatrix},
\end{equation*}
where $\tau_{\alpha}^{(i)}=(\tau_{i,\gamma})_{\gamma\in J_{i,\alpha}}\in \R^{m_{i,\alpha}}$.
If $m_{i,\alpha}=0$, we interpret the corresponding term in the sum as $\lambda_{i,\alpha}(A) t_{i,\alpha}^2$. 
 \item[(iii)] \begin{equation*} 
\begin{aligned}
\dd x = \prod_{i\in[r]} \prod_{\alpha\in[d_i]}
 \Bigl\{ 
2^{1 + m_{i,\alpha}/2} \mu_{i,\alpha}^{-(1+m_{i,\alpha})/2} t_{i,\alpha}^{1 + m_{i,\alpha}}
 \, \dd t_{i,\alpha} \dd\tau^{(i)}_\alpha
 \Bigr\},
\end{aligned}
\end{equation*}
where $\dd\tau^{(i)}_\alpha$ denotes the Lebesgue measure on $\R^{m_{i,\alpha}}$ (see Remark \ref{rem:Leb}). 
\end{enumerate}
\end{thm}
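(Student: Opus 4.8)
The plan is to prove existence and uniqueness of the factorization first, and then read off (i)--(iii) as consequences. The starting observation is that the frame idempotents $\{c^{(i)}_\alpha\}$ are mutually orthogonal and sum to the identity, so there is an orthonormal basis of $\R^p$, ordered by the pairs $(i,\alpha)$ (block index first, then frame index), in which each $c^{(i)}_\alpha$ is the coordinate projection onto a subspace $U^{(i)}_\alpha$ of dimension $\mu_{i,\alpha}$. In this refined basis every $T\in\hh$ is \emph{genuinely} lower triangular: the term $t_{i,\alpha}\tilde e^{(i)}_\alpha$ is a scalar on $U^{(i)}_\alpha$, the intra-block part $c^{(i)}_{>\alpha}M_i(\mathcal{Z})c^{(i)}_\alpha$ maps $U^{(i)}_\alpha$ into $\bigoplus_{\beta>\alpha}U^{(i)}_\beta$, and the part $L_i(\mathcal{Z})c^{(i)}_\alpha$ maps $U^{(i)}_\alpha$ into the blocks strictly below $i$. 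Against this backdrop uniqueness is immediate: since $\hh^+$ sits inside the set of lower-triangular matrices with positive diagonal, the classical uniqueness of the Cholesky factor forces at most one $T\in\hh^+$ with $x=T\transp{T}$. It remains to show such a $T$ exists, i.e.\ that the classical Cholesky factor of any $x\in\mathcal{Z}\cap\mathrm{Sym}^+(p)$ actually lies in $\hh$.

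Existence I would establish by induction, peeling off the first frame subspace $U^{(1)}_1$. Writing $x=\begin{bmatrix}P & Q\\ \transp{Q} & R\end{bmatrix}$ with respect to $\R^p=U^{(1)}_1\oplus (U^{(1)}_1)^{\perp}$, the decisive structural input is that $c^{(1)}_1$ is a \emph{primitive} idempotent of the Euclidean Jordan algebra $M_1(\mathcal{Z})$ (which is a Jordan algebra by (Z1)); hence the corner Peirce space $c^{(1)}_1 M_1(\mathcal{Z})c^{(1)}_1=\R c^{(1)}_1$ is one-dimensional and $P=\lambda I_{\mu_{1,1}}$ for a scalar $\lambda>0$. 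The Cholesky step then sets the diagonal coordinate $t_{1,1}=\sqrt{\lambda\mu_{1,1}}$ and identifies the first sub-diagonal column $\transp{Q}/\sqrt\lambda$ with an element of $\hh_{1,1}=c^{(1)}_{>1}M_1(\mathcal{Z})c^{(1)}_1\oplus L_1(\mathcal{Z})c^{(1)}_1$, which by (Z0) is exactly the space of admissible first columns. The hard part, and the place where (Z1) is essential, is to show that the Schur complement $S=R-\transp{Q}Q/\lambda$ again lies in the reduced space obtained from $\mathcal{Z}$ by deleting $U^{(1)}_1$ (with block $1$ now carrying the frame $\{c^{(1)}_2,\ldots,c^{(1)}_{d_1}\}$), and that this reduced space again satisfies (Z0)--(Z1). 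I expect to verify this closure by matching the quadratic correction $\transp{Q}Q/\lambda$ against the corresponding blocks of $\tri(x)\transp{\tri(x)}\in\mathcal{Z}$ furnished by (Z1); granting it, the induction hypothesis supplies the remaining coordinates $(t_{i,\alpha},\tau^{(i)}_\alpha)$ and assembles $T\in\hh^+$.

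With the factorization in hand, (i) is read off from the refined basis: $T$ being lower triangular gives $\det x=(\det T)^2$, and the only diagonal entries of $T$ are the $\mu_{i,\alpha}$ equal values $t_{i,\alpha}/\sqrt{\mu_{i,\alpha}}$ on each $U^{(i)}_\alpha$, so $\det T=\prod_{i,\alpha}(t_{i,\alpha}/\sqrt{\mu_{i,\alpha}})^{\mu_{i,\alpha}}$. For (ii) I would expand $\tr(xA)=\tr(T\transp{T}A)=\sum_{s,s'}\theta_s\theta_{s'}\tr(A\,g_s\transp{g_{s'}})$ over the basis vectors $g_s\in\{\tilde e^{(i)}_\alpha\}\cup\{\tilde f^{(i)}_\gamma\}$ with coefficients $\theta_s\in\{t_{i,\alpha},\tau_{i,\gamma}\}$. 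Every such basis vector satisfies $g_s=g_s c^{(i)}_\alpha$ for its own pair $(i,\alpha)$, whence $g_s\transp{g_{s'}}=g_s\,c^{(i)}_\alpha c^{(j)}_\beta\,\transp{g_{s'}}=0$ whenever the pairs $(i,\alpha)\ne(j,\beta)$, by orthogonality of the idempotents; hence the quadratic form is block diagonal over the pairs $(i,\alpha)$, and within the $(i,\alpha)$ block its Gram entries $\tr(A\,g_s\transp{g_{s'}})$ are precisely the entries $\lambda_{i,\alpha}(A)$, $v_{i,\alpha}(A)$, $\psi_{i,\alpha}(A)$ of $\phi_{i,\alpha}(A)$, yielding the stated formula.

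Finally, (iii) is the Jacobian of the change of variables $(t_{i,\alpha},\tau^{(i)}_\alpha)\mapsto x=T\transp{T}$. Because of the triangular dependence in the refined ordering (each pair $(i,\alpha)$ influences only its own and later blocks), the differential $\dd x=(\dd T)\transp{T}+T\transp{(\dd T)}$ has a block-triangular coordinate matrix, so its determinant factorizes over the pairs $(i,\alpha)$. For each pair the scalar $t_{i,\alpha}$ governs the $\mu_{i,\alpha}$ equal diagonal entries of $T$ on $U^{(i)}_\alpha$ together with the $m_{i,\alpha}$ off-diagonal directions $\tau^{(i)}_\alpha$; tracking the factor $2t_{i,\alpha}$ from the diagonal, one factor of $t_{i,\alpha}$ per off-diagonal direction, and the normalizations $\mu_{i,\alpha}^{-1/2}$ inherited from $\tilde e^{(i)}_\alpha$ produces the local weight $2^{1+m_{i,\alpha}/2}\mu_{i,\alpha}^{-(1+m_{i,\alpha})/2}t_{i,\alpha}^{1+m_{i,\alpha}}$. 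I expect the bookkeeping of these constants and powers, rather than any conceptual difficulty, to be the most laborious part; the genuine obstacle remains the Schur-complement closure in the existence step, where condition (Z1) does the essential work.
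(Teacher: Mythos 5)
Your uniqueness argument and your readings of (i)--(iii) are fine: in the orthonormal basis adapted to the Jordan frames, elements of $\hh$ are genuinely lower triangular, so uniqueness does follow from classical Cholesky uniqueness, and your determinant, trace and Jacobian computations are in substance the same as the paper's (which likewise rest on the orthogonality $g_s\transp{g_{s'}}=0$ for distinct frame pairs and on the triangular dependence of $x=T\transp{T}$ on the coordinates $(t_{i,\alpha},\tau^{(i)}_\alpha)$). The problem is the existence half, which you have only outlined: the step you yourself call the genuine obstacle --- closure of the Schur complement --- is exactly where all the content of the theorem sits, and your proposed way of settling it does not work as stated.

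Concretely, after peeling off the single primitive idempotent $c^{(1)}_1$ you must show that $S=R-\transp{Q}Q/\lambda$ lies in a reduced space, on $U^{(1)}_{>1}$ together with blocks $2,\dots,r$, which again satisfies (Z0)--(Z1). Two distinct difficulties are hidden here. First, $\transp{Q}Q/\lambda$ is a compression by the \emph{sub}-projection $c^{(1)}_1$, whereas (Z1) only controls products built from the coarse block triangularization $\tri$: for $\ell\in L_1(\mathcal{Z})$, (Z1) gives $\ell\, e^{(1)}\ell\in\mathcal{Z}$ directly, but not $\ell\, c^{(1)}_1\ell\in\mathcal{Z}$. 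To obtain the latter one needs auxiliary closure identities, e.g.\ polarized and iterated applications of (Z1) giving $\ell w+w\ell\in L_1(\mathcal{Z})$ and then $\ell w\ell\in\mathcal{Z}$ for every $w\in M_1(\mathcal{Z})$ (applied with $w=c^{(1)}_1$), and when $M_1(\mathcal{Z})$ is noncommutative one must additionally control the cross terms $c^{(1)}_{>1}m\,c^{(1)}_1\ell+\ell\, c^{(1)}_1 m\,c^{(1)}_{>1}$, which requires Jordan-algebra identities such as $m\,c^{(1)}_1 m\in M_1(\mathcal{Z})$ (quadratic representation); none of this is produced by simply ``matching against $\tri(x)\transp{\tri(x)}$''. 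Second, even once $S$ is shown to lie in the compressed space, that space carries a \emph{refined} block partition, so its condition (Z1) involves a different triangular projection and does not follow formally from (Z1) for $\mathcal{Z}$; your induction needs a precisely formulated class of reduced objects, with axioms you verify, or it does not close. The paper circumvents exactly this by a two-level induction: a coarse induction over whole blocks, where $T_1x_1\transp{T_1}=y_1x_1^{<-1>}y_1\in\mathcal{Z}$ follows from the identities above with the full block projection and the Schur complement stays in a bona fide \BC-space of rank $r-1$, and a separate intra-block induction over the Jordan frame, where the only closure needed is $aba\in M_i(\mathcal{Z})$ inside a Jordan algebra with no $L$-part. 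Until you state and prove the closure lemma your one-idempotent-at-a-time reduction requires, the existence of $T\in\hh^+$ is not established.
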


\subsection{Integral formula}\label{sec:integral}

We normalize the Lebesgue measure $\dd x$ on the vector space $\mathcal{Z}$  with respect to the trace inner product (recall Remark \ref{rem:Leb}).

\begin{thm} \label{thm:integral}
Assume (Z0) and (Z1). 

For $A \in \mathrm{Sym}^+(p)$, 
 the integral 
\[
\int_{\mathcal{Z} \cap \mathrm{Sym}^+(p)}(\det x)^s  e^{- \tr(Ax) } \,\dd x
\]
converges
if and only if 
\[
s > \max\left\{-\frac{2+m_{i,\alpha}}{2\,\mu_{i,\alpha}}\colon i\in[r], \alpha\in[d_i]\right\}.
\]
In such case, the integral
equals
\[ e^{-p_{\mathcal{Z}} s - q_{\mathcal{Z}} } 
  \prod_{i\in[r]} \prod_{\alpha\in[d_i]}
  \Bigl\{ \Gamma\Bigl(\mu_{i,\alpha} s + 1 + \frac{m_{i,\alpha}}{2} \Bigr)
 \Bigr( \frac{\det \phi_{i,\alpha}(A) }{ \det \psi_{i,\alpha}(A) }
 \Bigr)^{-\mu_{i,\alpha} s - 1 - m_{i,\alpha}/2}
 (\det \psi_{i,\alpha}(A) )^{-1/2}
\Bigr\},
\]
where 
\begin{align*}
p_{\mathcal{Z}} 
&=\sum_{i\in[r]} \sum_{\alpha\in[d_i]} \mu_{i,\alpha} \log \mu_{i,\alpha},\\
q_{\mathcal{Z}}
&= \frac12 \sum_{i\in[r]} \sum_{\alpha\in[d_i]} 
 \{ (\log \mu_{i,\alpha} - \log(2\pi)) m_{i,\alpha} +\log \mu_{i,\alpha} \},  
\end{align*} 
 and we interpret $\det \psi_{i,\alpha}(A) = 1$ if $m_{i,\alpha} = 0$.
\end{thm}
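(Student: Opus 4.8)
The plan is to evaluate the integral by pushing it forward through the generalized Cholesky decomposition of Theorem~\ref{thm:new_Cholesky}, which turns the matrix integral over $\Zsp\cap\mathrm{Sym}^+(p)$ into a product of elementary scalar integrals. By that theorem every $x\in\Zsp\cap\mathrm{Sym}^+(p)$ is written uniquely as $x=T\transp T$ with $T\in\hh^+$, and $\hh^+$ is globally parametrized by the coordinates $(t_{i,\alpha})_{i\in[r],\alpha\in[d_i]}$ with $t_{i,\alpha}>0$ together with $(\tau_{i,\gamma})_{\gamma\in J_{i,\alpha}}\in\R$. I would substitute formulas (i)--(iii) for $(\det x)^s$, $e^{-\tr(Ax)}$ and $\dd x$. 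Since $\det x$ and the Jacobian factor as products over the pairs $(i,\alpha)$ while $\tr(Ax)$ is a sum over $(i,\alpha)$, the whole integrand factorizes and the integral splits into a product of integrals indexed by $(i,\alpha)$.

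For a fixed pair, abbreviating $\mu=\mu_{i,\alpha}$, $m=m_{i,\alpha}$ and $\phi_{i,\alpha}(A)=\left[\begin{smallmatrix}\lambda & v\\ \transp v & \psi\end{smallmatrix}\right]$, the corresponding factor equals
\[
2^{1+m/2}\mu^{-(1+m)/2}\mu^{-\mu s}\int_0^\infty\!\!\int_{\R^m} t^{2\mu s+1+m}\,e^{-(\lambda t^2+2t v\tau+\transp\tau\psi\tau)}\,\dd\tau\,\dd t.
\]
I would integrate over $\tau$ first: completing the square, $\lambda t^2+2tv\tau+\transp\tau\psi\tau=\transp{(\tau+t\psi^{-1}\transp v)}\psi(\tau+t\psi^{-1}\transp v)+t^2\det\phi/\det\psi$ via the Schur-complement identity $\lambda-v\psi^{-1}\transp v=\det\phi/\det\psi$, so the Gaussian integral produces $\pi^{m/2}(\det\psi)^{-1/2}e^{-t^2\det\phi/\det\psi}$; the substitution $u=t^2$ then reduces the $t$-integral to $\tfrac12\Gamma(\mu s+1+\tfrac m2)(\det\phi/\det\psi)^{-\mu s-1-m/2}$, which converges exactly when $\mu s+1+\tfrac m2>0$, i.e. $s>-\frac{2+m_{i,\alpha}}{2\mu_{i,\alpha}}$. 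As all factors are strictly positive, the product converges if and only if each factor does, yielding the stated threshold as a maximum over $(i,\alpha)$.

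It remains to assemble the product over all $(i,\alpha)$ and collect constants. The constant part of each factor simplifies via $2^{1+m/2}\cdot\tfrac12\cdot\pi^{m/2}=(2\pi)^{m/2}$, so each factor contributes $(2\pi)^{m/2}\mu^{-(1+m)/2}\mu^{-\mu s}$ besides the $\Gamma$-, $(\det\phi/\det\psi)$- and $(\det\psi)^{-1/2}$-terms. Taking logarithms, the coefficient of $s$ is $-\sum_{i,\alpha}\mu_{i,\alpha}\log\mu_{i,\alpha}=-p_\Zsp$ and the constant term is $\sum_{i,\alpha}\bigl(\tfrac{m_{i,\alpha}}2\log(2\pi)-\tfrac{1+m_{i,\alpha}}2\log\mu_{i,\alpha}\bigr)=-q_\Zsp$, which matches the claimed expression.

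The main obstacle is justifying the positive-definiteness of $\phi_{i,\alpha}(A)$, on which the Gaussian integration, the Schur-complement identity and the positivity of $\det\phi/\det\psi$ all depend. I would resolve it by observing that $\langle X,Y\rangle_A:=\tr(AX\transp Y)$ is a genuine inner product on $\R^{p\times p}$ (writing $A=\transp B B$ gives $\tr(AX\transp X)=\|BX\|_F^2>0$ for $X\neq0$, since $A\in\mathrm{Sym}^+(p)$), and that $\phi_{i,\alpha}(A)$ is precisely the Gram matrix of the linearly independent family $\{\tilde e^{(i)}_\alpha\}\cup\{\tilde f^{(i)}_\gamma\}_{\gamma\in J_{i,\alpha}}$ with respect to $\langle\cdot,\cdot\rangle_A$; hence $\phi$ and its principal submatrix $\psi$ are positive definite. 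Everything else is a careful but routine tracking of the multiplicative constants produced by the normalization of $\dd x$.
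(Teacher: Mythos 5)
Your proposal is correct and follows essentially the same route as the paper: push the integral through the generalized Cholesky decomposition of Theorem~\ref{thm:new_Cholesky}, factor it over the pairs $(i,\alpha)$, evaluate each factor by completing the square via the Schur complement (Gaussian integral in $\tau$, then the substitution $u=t^2$ giving the Gamma factor, exactly the content of the paper's Lemma~\ref{lemma:exp_quad}), and collect the constants into $p_{\mathcal{Z}}$ and $q_{\mathcal{Z}}$. Your Gram-matrix argument for the positive definiteness of $\phi_{i,\alpha}(A)$ under the inner product $\langle X,Y\rangle_A=\tr(AX\transp{Y})$ is a welcome explicit justification of a hypothesis the paper invokes only implicitly when applying its lemma.
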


\begin{example}[Strongly regular graph model]
Let $G_0$ be a (primitive) strongly regular graph with parameters
\[
\mathrm{srg}(p,k,\lambda,\mu),
\]
and adjacency matrix $B \in \mathbb{R}^{p\times p}$. Thus
\[
BJ_p = J_pB = kJ_p, 
\qquad
B^2 = kI_p + \lambda B + \mu(J_p - I_p - B),
\]
where $J_p$ is the all-ones $p\times p$ matrix. The eigenvalues of $B$ are
\[
k,\ \theta_1,\ \theta_2\mbox{ with multiplicities }
1,\ f_1,\ f_2.
\]
We have \cite{srg}
\[
\theta_{1/2} = \frac12((\lambda-\mu)\pm \sqrt{\Delta}),\qquad \Delta = (\lambda-\mu)^2+4(k-\mu)
\]
and
\[
f_{1/2} = \frac12\left(p-1\mp\frac{2k+(p-1)(\lambda-\mu)}{\sqrt{\Delta}} \right).
\]
Let 
\begin{align*}
E_0 &= \frac{1}{(k-\theta_1)(k-\theta_2)} (B-\theta_1 I_p)(B-\theta_2 I_p) = \frac{1}{p}J_p,\\
E_1 &= \frac{1}{(\theta_1-k)(\theta_1-\theta_2)} (B-k I_p)(B-\theta_2 I_p),\\
E_2 &= \frac{1}{(\theta_2-k)(\theta_2-\theta_1)} (B-k I_p)(B-\theta_1 I_p).
\end{align*}
These are the unique matrices satisfying
\[
E_i E_j=\delta_{ij}E_i,\quad
E_0+E_1+E_2=I_p,\quad B=k E_0+\theta_1 E_1+\theta_2 E_2.
\]
We form the colored complete graph $(G,\mathcal{C})$ as follows:
\begin{itemize}
    \item The vertex set is $V = \{1,\dots,p\}$ with a single vertex color class $V_1=V$.
    \item The underlying graph $G$ is the complete graph $K_p$.
    \item For $\{i,j\}$ with $i\neq j$, we give the edge color ``edge'' if $\{i,j\}$ is an edge of $G_0$ and color ``nonedge'' otherwise.
\end{itemize}
The associated colored matrix space is
\[
\mathcal{Z}
= \mathcal{Z}_G^{\mathcal{C}}= \mathrm{span}\{I_p, B, J_p-I_p-B\}=\mathrm{span}\{E_0,E_1,E_2\}.
\]

Since $E_0,E_1,E_2$ are minimal idempotents in a commutative Euclidean Jordan algebra, 
$\mathcal{Z}$ is a \cBC-space of rank $r=1$ in the sense of Definition~\ref{def:I-space}. We now identify the corresponding structure constants.

We have a single block $i=1$ and a Jordan frame
\[
c^{(1)}_1 = E_0,\quad c^{(1)}_2 = E_1,\quad c^{(1)}_3 = E_2.
\]
Their ranks are
\[
\mu_{1,1} = \mathrm{rank}(E_0) = 1,\qquad
\mu_{1,2} = \mathrm{rank}(E_1) = f_1,\qquad
\mu_{1,3} = \mathrm{rank}(E_2) = f_2.
\]
Thus the Jordan rank of $M_1(\mathcal{Z})=\mathcal{Z}$ is $d_1 = 3$.

Because there is only a single block and no off-diagonal part, we have
$L_1(\mathcal{Z})=\{0\}$ and hence
\[
\hh_{1,\alpha} = L_1(\mathcal{Z}) c^{(1)}_\alpha = \{0\},\qquad
m_{1,\alpha} = \dim \hh_{1,\alpha} = 0,\quad \alpha=1,2,3.
\]
Therefore, the structure constants of $\mathcal{Z}$ are
\[
r=1,\quad
d_1 = 3,\quad
(\mu_{1,\alpha})_{\alpha=1}^3 = (1,f_1,f_2),
\quad
(m_{1,\alpha})_{\alpha=1}^3 = (0,0,0).
\]

Let $A\in\mathrm{Sym}^+(p)$ be arbitrary.
Following Section~\ref{sec:structure_constants}, set
\[
\tilde e^{(1)}_\alpha 
:= \frac{c^{(1)}_\alpha}{\sqrt{\mu_{1,\alpha}}}
= \frac{E_{\alpha-1}}{\sqrt{\mu_{1,\alpha}}},\qquad \alpha=1,2,3.
\]
Since $m_{1,\alpha}=0$, the corresponding matrices $\phi_{1,\alpha}(A)$
are scalars:
\[
\phi_{1,\alpha}(A) 
= \lambda_{1,\alpha}(A) 
= \mathrm{tr}(A\,\tilde e^{(1)}_\alpha \tilde e^{(1)}_\alpha)
= \frac{1}{\mu_{1,\alpha}}\mathrm{tr}(A E_{\alpha-1}),
\qquad \alpha=1,2,3.
\]

We normalize the Lebesgue measure $\dd x$ on $\mathcal{Z}$ 
with respect to the trace inner product as in Remark~\ref{rem:Leb}. 
By Theorem~\ref{thm:integral}, for $s\in\mathbb{R}$ the integral
\[
I(A,s)
:= \int_{\mathcal{Z}\cap\mathrm{Sym}^+(p)} (\det x)^s e^{-\mathrm{tr}(Ax)} \,\dd x
\]
converges if and only if
\[
s > \max_{\alpha=1,2,3} \left\{-\frac{2 + m_{1,\alpha}}{2\mu_{1,\alpha}}\right\}
= \max\left\{
-1,\; -\frac{1}{f_1},\; -\frac{1}{f_2}
\right\}
= -\frac{1}{\max(f_1,f_2)}.
\]
In that case,
\[
I(A,s)
= e^{-p_{\mathcal{Z}}s-q_{\mathcal{Z}}}
  \prod_{\alpha=1}^3
  \Gamma\bigl(\mu_{1,\alpha}s + 1\bigr)
  \bigl(\det \phi_{1,\alpha}(A)\bigr)^{-\mu_{1,\alpha}s - 1},
\]
where
\[
p_{\mathcal{Z}}
= \sum_{\alpha=1}^3 \mu_{1,\alpha}\log\mu_{1,\alpha}
= f_1\log f_1 + f_2\log f_2,
\]
and
\[
q_{\mathcal{Z}}
= \frac12 \sum_{\alpha=1}^3 \log\mu_{1,\alpha}
= \frac12\log(f_1f_2).
\]
In particular, if $A=I_p$, then $\phi_{1,\alpha}(I_p)= 1$ for $\alpha=1,2,3$ and we recover the special case
\[
I(I_p,s)
= (f_1f_2)^{-1/2}\, f_1^{-f_1s}\,f_2^{-f_2s}\,
   \Gamma(s+1)\,\Gamma(f_1s+1)\,\Gamma(f_2s+1).
\]
\end{example}

\subsection{Computation of structure constants and determinant factors} 

In this section we introduce an algorithm that efficiently computes all
quantities needed to evaluate the integral from Theorem~\ref{thm:integral}.
Throughout we assume (Z2), so that $\mathcal{Z}$ is a \cBC-space. Since
our main application is the computation of normalizing constants
in the Bayesian model associated with a symmetric CER graph
$(G,\mathcal{C})$, we restrict here to $\mathcal{Z} = \coloredZ$, i.e., we restrict to the matrix spaces \cBC \,$\cap$ RCON. 

Under (Z2), Corollary~\ref{lemma:commutativity} implies that
$M_i(\mathcal{Z})$ is a commutative matrix algebra for each $i\in[r]$.
Hence $M_i(\mathcal{Z})$ is spanned by a Jordan frame
$\{c^{(i)}_\alpha\}_{\alpha\in[d_i]}$.
In Section~\ref{sec:intersection} we introduced a Random Method which yields the Jordan frames
$\{c^{(i)}_\alpha\}_{\alpha\in[d_i]}$ for all $i\in[r]$
with complexity $\sum_{i\in[r]} O(d_i^3)$.
We recall that
\[
  \mu_{i,\alpha} = \mathrm{rank}(c^{(i)}_\alpha),\qquad
  c^{(i)}_\alpha c^{(i)}_\beta = \delta_{\alpha\beta} c^{(i)}_\alpha.
\]

The next step, which we call the Gram-Cholesky method, computes the remaining structure constants $m_{i,\alpha}$
and, for a given $A\in\mathrm{Sym}^+(p)$, the determinant terms
\[
 \det \psi_{i,\alpha}(A)
 \quad\mbox{and}\quad
 \frac{\det \phi_{i,\alpha}(A)}{\det\psi_{i,\alpha}(A)}.
\]
We now describe this method in detail, assuming that the Jordan frames
$\{c^{(i)}_\alpha\}_{\alpha\in[d_i]}$ are already known.

\textbf{Gram-Cholesky method}\\
\noindent\textbf{Step 1: Factorization of the projections.}
Fix $i\in[r]$ and $\alpha\in[d_i]$.
Compute an orthonormal factor $W_{i,\alpha}\in\R^{p\times \mu_{i,\alpha}}$ such that
\[
  W_{i,\alpha}^\top W_{i,\alpha} = I_{\mu_{i,\alpha}},
  \qquad
  c_\alpha^{(i)} = W_{i,\alpha} W_{i,\alpha}^\top.
\]
Only the rows of $W_{i,\alpha}$ corresponding to the $i$th block of $\Zsp$ are non-zero.
Equivalently, one may work with the restricted factor
$\widehat W_{i,\alpha}\in\R^{n_i\times\mu_{i,\alpha}}$ on the $(i,i)$ diagonal block,
so that $(c_\alpha^{(i)})_{ii} = \widehat W_{i,\alpha} \widehat W_{i,\alpha}^\top$, where $(c_\alpha^{(i)})_{ii}$ is the $(i,i)$-block of $c_\alpha^{(i)}$.

\noindent\textbf{Step 2: Constructing the spaces $\hh_{i,\alpha}$.}
Let $\{B_1^{(i)},\dots,B_{q_i}^{(i)}\}$ be a basis of $L_i(\mathcal{Z})$.
For a CER graph these are the adjacency matrices connecting color class $i$
to classes $j>i$. Since we are in a \cBC-space, 
\[
  \hh_{i,\alpha} = L_i(\mathcal{Z})\,c_\alpha^{(i)}
  = \mathrm{span}\{B_t^{(i)} c_\alpha^{(i)} \colon t\in[q_i]\}.
\]
To exploit the low rank of $c_\alpha^{(i)}$, we define
\[
  \widehat U_t := B_t^{(i)} W_{i,\alpha}\in\R^{p\times \mu_{i,\alpha}},
  \qquad t\in[q_i].
\]
Then $B_t^{(i)}c_\alpha^{(i)} = \widehat U_t W_{i,\alpha}^\top$, and
\[
  \langle B_t^{(i)}c_\alpha^{(i)}, B_s^{(i)}c_\alpha^{(i)}\rangle_F
  = \tr( B_t^{(i)}c_{\alpha}^{(i)}(B_s^{(i)}c_{\alpha}^{(i)})^\top)
  = \tr(\widehat U_t \widehat U_s^\top).
\]

If all $\widehat U_t$ are zero, then $\hh_{i,\alpha} = \{0\}$ and
we set $m_{i,\alpha}=0$.
Otherwise, we form the Gram matrix
\[
  G_{i,\alpha}\in\R^{q_i\times q_i},\qquad
  (G_{i,\alpha})_{ts}
  = \tr(\widehat U_t \widehat U_s^\top),\quad t,s\in[q_i].
\]
Perform a Cholesky decomposition with complete pivoting of $G_{i,\alpha}$ (the pivoted Cholesky factorization of each Gram
matrix $G_{i,\alpha}$ can be computed in \textsf{R} using the function
\lstinline|chol| from the \textsf{Matrix} package with the option
\lstinline|pivot = TRUE|).
This yields a permutation matrix $P$ and a rank
$m_{i,\alpha} = \mathrm{rank}(G_{i,\alpha})$ such that
the top-left $m_{i,\alpha}\times m_{i,\alpha}$ block of the permuted matrix
is non-singular:
\[
  G_{i,\alpha}
  = P^\top
    \begin{bmatrix}
      \widetilde G_{i,\alpha} & *\\ * & *
    \end{bmatrix}
    P,\qquad \widetilde G_{i,\alpha}\in\R^{m_{i,\alpha}\times m_{i,\alpha}}.
\]
 The corresponding set of pivot indices
\[
  \mathcal{I}_{i,\alpha}
  = \{t_1,\dots,t_{m_{i,\alpha}}\}\subset [q_i]
\]
indexes a basis $\{B_{t_k}^{(i)}c_\alpha^{(i)}\}_{k=1}^{m_{i,\alpha}}$
of $\hh_{i,\alpha}$, and $\widetilde G_{i,\alpha}$ is their Gram matrix.

\noindent\textbf{Step 3: $A$-weighted Gram matrices and mixed terms.}
Fix $A\in\mathrm{Sym}^+(p)$.
For notational convenience, we write
\[
  U_k := B_{t_k}^{(i)} c_\alpha^{(i)}
       = \widehat U_{t_k} W_{i,\alpha}^\top,
  \qquad k\in[m_{i,\alpha}],
\]
and reuse $\widehat U_{t_k}$ for the compressed factors.

The $A$-weighted Gram matrix on the basis $\{U_k\}_{k=1}^{m_{i,\alpha}}$ is
\[
  \Psi_{i,\alpha}(A) \in \R^{m_{i,\alpha}\times m_{i,\alpha}},
  \qquad
  (\Psi_{i,\alpha}(A))_{kl}
  = \tr(A U_k U_l^\top)
  = \tr(A \widehat U_{t_k} \widehat U_{t_l}^\top).
\]
We also define the mixed vector
$V_{i,\alpha}(A)\in\R^{1\times m_{i,\alpha}}$ by
\[
  (V_{i,\alpha}(A))_{1k}
  = \frac{1}{\sqrt{\mu_{i,\alpha}}}\,\tr(A c_\alpha^{(i)} U_k^\top)
  = \frac{1}{\sqrt{\mu_{i,\alpha}}}\,
    \tr(A W_{i,\alpha} \widehat U_{t_k}^\top),
  \qquad k\in[m_{i,\alpha}].
\]
Here $V_{i,\alpha}(A)$ is the coordinate vector of mixed terms in the basis $\{U_k\}$; it differs from the vector $v_{i,\alpha}(A)$ defined in Section~\ref{sec:structure_constants}, which is expressed in the orthonormal basis $\{\tilde f_\gamma^{(i)}\}$.

Finally, recalling $\tilde e^{(i)}_\alpha = c_\alpha^{(i)}/\sqrt{\mu_{i,\alpha}}$, we have
\[
  \lambda_{i,\alpha}(A)
  = \tr(A \tilde e^{(i)}_\alpha (\tilde e^{(i)}_\alpha)^\top)
  = \frac{1}{\mu_{i,\alpha}}\,\tr(Ac_\alpha^{(i)}).
\]
Since $c_\alpha^{(i)}$ is supported only on block $(i,i)$, this can be computed as
\[
  \lambda_{i,\alpha}(A)
  = \frac{1}{\mu_{i,\alpha}}\,\tr(A_{ii} (c_\alpha^{(i)})_{ii})
  = \frac{1}{\mu_{i,\alpha}}\,\tr(\widehat W_{i,\alpha}^\top A_{ii}
                             \widehat W_{i,\alpha}),
\]
where $A_{ii}$ is the $(i,i)$-block of $A$.

\noindent\textbf{Step 4: Determinant formulas.}
If $m_{i,\alpha}=0$, then we set
\[
  \det \psi_{i,\alpha}(A) = 1,
  \qquad
  \frac{\det \phi_{i,\alpha}(A)}{\det \psi_{i,\alpha}(A)}
  = \lambda_{i,\alpha}(A).
\]

Assume now that $m_{i,\alpha}>0$.
Let $\widetilde G_{i,\alpha}$ be as in Step~2, i.e. the Gram matrix of
$\{U_k\}_{k=1}^{m_{i,\alpha}}$ with respect to the Frobenius inner product.
Then:
\begin{align*}
  \det \psi_{i,\alpha}(A)
  &= \frac{\det \Psi_{i,\alpha}(A)}
          {\det \widetilde G_{i,\alpha}},\\[0.4em]
  \frac{\det \phi_{i,\alpha}(A)}{\det \psi_{i,\alpha}(A)}
  &= \lambda_{i,\alpha}(A)
     - V_{i,\alpha}(A)
       (\Psi_{i,\alpha}(A))^{-1}
       V_{i,\alpha}(A)^\top.
\end{align*}

In practice one computes $V_{i,\alpha}(A)\Psi_{i,\alpha}(A)^{-1}
V_{i,\alpha}(A)^\top$ and $\det \Psi_{i,\alpha}(A)$ via a Cholesky factorization. Write
$\Psi_{i,\alpha}(A) = L L^\top$, with $L$ lower triangular, and solve
\[
  L\, y = V_{i,\alpha}(A)^\top
\]
for $y$ by forward substitution. Then
\[
  V_{i,\alpha}(A) \Psi_{i,\alpha}(A)^{-1} V_{i,\alpha}(A)^\top
  = V_{i,\alpha}(A) L^{-\top} L^{-1} V_{i,\alpha}(A)^\top
  = y^\top y.
\]
The determinant $\det\Psi_{i,\alpha}(A)$ is given by
$\det\Psi_{i,\alpha}(A) = \prod_{j\in[m_{i,\alpha}]} L_{jj}^2$.

\begin{thm}\label{thm:GC}
The Gram-Cholesky method described above correctly computes the
structure constants $m_{i,\alpha}$ and, for every $A\in\mathrm{Sym}^+(p)$,
the determinants $\det\psi_{i,\alpha}(A)$ and
$\det\phi_{i,\alpha}(A)/\det\psi_{i,\alpha}(A)$.
\end{thm}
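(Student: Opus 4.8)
The plan is to recast every quantity appearing in the theorem as Gram-type data for a single weighted inner product, and then to transport these data between two bases of $\hh_{i,\alpha}$ by an explicit change-of-basis matrix. Fix $i\in[r]$, $\alpha\in[d_i]$ and $A\in\mathrm{Sym}^+(p)$, and introduce the bilinear form $\langle X,Y\rangle_A := \tr(A X \transp{Y})$ on $\R^{p\times p}$. Since $A=\transp{A}$, cyclicity of the trace gives $\tr(A X\transp{Y})=\tr(A Y\transp{X})$, so $\langle\cdot,\cdot\rangle_A$ is symmetric; and since $A\succ 0$ one has $\langle X,X\rangle_A=\tr(A X\transp{X})>0$ for $X\neq 0$, so it is a genuine inner product, the Frobenius product $\langle\cdot,\cdot\rangle_F$ being the case $A=I_p$. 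With this notation $\lambda_{i,\alpha}(A)=\langle \tilde e^{(i)}_\alpha,\tilde e^{(i)}_\alpha\rangle_A$, $(v_{i,\alpha}(A))_{1\gamma}=\langle \tilde e^{(i)}_\alpha,\tilde f^{(i)}_\gamma\rangle_A$, and $\psi_{i,\alpha}(A)$ is the $\langle\cdot,\cdot\rangle_A$-Gram matrix of the orthonormal basis $\{\tilde f^{(i)}_\gamma\}$ of $\hh_{i,\alpha}$, so that $\phi_{i,\alpha}(A)$ is precisely the $\langle\cdot,\cdot\rangle_A$-Gram matrix of the family $\{\tilde e^{(i)}_\alpha\}\cup\{\tilde f^{(i)}_\gamma\}_{\gamma}$.

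Next I would verify correctness of the combinatorial output $m_{i,\alpha}$. Under (Z2), \eqref{eq:defhh_simple} gives $\hh_{i,\alpha}=L_i(\mathcal{Z})c^{(i)}_\alpha=\mathrm{span}\{B^{(i)}_t c^{(i)}_\alpha:t\in[q_i]\}$, so $m_{i,\alpha}=\dim\hh_{i,\alpha}$ is the rank of this (possibly dependent) spanning family. Using $c^{(i)}_\alpha=W_{i,\alpha}\transp{W_{i,\alpha}}$ with $\transp{W_{i,\alpha}}W_{i,\alpha}=I_{\mu_{i,\alpha}}$, one checks $B^{(i)}_t c^{(i)}_\alpha=\widehat U_t\transp{W_{i,\alpha}}$ and hence $\langle B^{(i)}_t c^{(i)}_\alpha,B^{(i)}_s c^{(i)}_\alpha\rangle_F=\tr(\widehat U_t\transp{\widehat U_s})$, so $G_{i,\alpha}$ is exactly the Frobenius Gram matrix of the spanning family. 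Since the rank of a Gram matrix equals the dimension of the span, $\mathrm{rank}(G_{i,\alpha})=m_{i,\alpha}$; and a rank-revealing complete-pivoting Cholesky factorization of the positive semidefinite $G_{i,\alpha}$ returns this rank together with a pivot set $\mathcal{I}_{i,\alpha}$ whose leading principal block $\widetilde G_{i,\alpha}$ is nonsingular. Nonsingularity of $\widetilde G_{i,\alpha}$ forces $\{U_k\}=\{B^{(i)}_{t_k}c^{(i)}_\alpha\}$ to be linearly independent, and being $m_{i,\alpha}$ independent vectors in an $m_{i,\alpha}$-dimensional space, they form a basis of $\hh_{i,\alpha}$.

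I would then close the argument by change of basis. Let $C\in\R^{m_{i,\alpha}\times m_{i,\alpha}}$ be the invertible matrix expressing the new basis in the orthonormal one, $U_k=\sum_\gamma C_{\gamma k}\tilde f^{(i)}_\gamma$. Bilinearity of $\langle\cdot,\cdot\rangle_A$ immediately yields $\Psi_{i,\alpha}(A)=\transp{C}\psi_{i,\alpha}(A)C$, $V_{i,\alpha}(A)=v_{i,\alpha}(A)\,C$, and (taking $A=I_p$) $\widetilde G_{i,\alpha}=\transp{C}C$; here one also uses $\transp{W_{i,\alpha}}W_{i,\alpha}=I$ to confirm that the compressed traces $\tr(A\widehat U_{t_k}\transp{\widehat U_{t_l}})$ and $\tfrac{1}{\sqrt{\mu_{i,\alpha}}}\tr(A W_{i,\alpha}\transp{\widehat U_{t_k}})$ computed in Step 3 coincide with $\langle U_k,U_l\rangle_A$ and $\langle\tilde e^{(i)}_\alpha,U_k\rangle_A$. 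Taking determinants gives $\det\Psi_{i,\alpha}(A)=(\det C)^2\det\psi_{i,\alpha}(A)=\det\widetilde G_{i,\alpha}\cdot\det\psi_{i,\alpha}(A)$, which is the first formula. For the second, the Schur-complement expansion of $\phi_{i,\alpha}(A)$ along its first row and column gives $\det\phi_{i,\alpha}(A)/\det\psi_{i,\alpha}(A)=\lambda_{i,\alpha}(A)-v_{i,\alpha}(A)\psi_{i,\alpha}(A)^{-1}\transp{v_{i,\alpha}(A)}$, and substituting $V=vC$, $\Psi=\transp{C}\psi C$ shows $V_{i,\alpha}(A)\Psi_{i,\alpha}(A)^{-1}\transp{V_{i,\alpha}(A)}=v_{i,\alpha}(A)\psi_{i,\alpha}(A)^{-1}\transp{v_{i,\alpha}(A)}$ — both equal the squared $A$-norm of the $\langle\cdot,\cdot\rangle_A$-orthogonal projection of $\tilde e^{(i)}_\alpha$ onto $\hh_{i,\alpha}$, a basis-independent quantity — giving the second formula. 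The degenerate case $m_{i,\alpha}=0$ reduces to $\phi_{i,\alpha}(A)=\lambda_{i,\alpha}(A)$ and is immediate.

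The bulk of this is routine linear algebra (symmetry and positivity of $\langle\cdot,\cdot\rangle_A$, contragredient change of basis, and the Schur complement). The one step that needs genuine care is the rank-revealing guarantee for the complete-pivoting Cholesky factorization of the singular positive semidefinite matrix $G_{i,\alpha}$: namely, that the pivot set selects a maximal linearly independent subfamily and that the factorization halts exactly at the true rank, with $\widetilde G_{i,\alpha}$ the nonsingular leading block. In exact arithmetic this is classical for positive semidefinite matrices, and I would invoke that fact with a citation rather than re-prove it; I would also remark that this is precisely where the distinction between exact and numerical rank enters in a finite-precision implementation, though it does not affect the algebraic identities established above.
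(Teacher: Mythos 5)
Your proof is correct and follows essentially the same route as the paper's: identify $\psi_{i,\alpha}(A)$, $\phi_{i,\alpha}(A)$, $\Psi_{i,\alpha}(A)$, and $\widetilde G_{i,\alpha}$ as Gram matrices of the bilinear form $(X,Y)\mapsto\tr(AX\transp{Y})$ in orthonormal versus pivot bases of $\hh_{i,\alpha}$, transport them by the change-of-basis matrix $C$, take determinants, and finish with the Schur complement (the paper applies it to $\Phi_{i,\alpha}(A)$ after first showing $\det\phi/\det\psi=\det\Phi/\det\Psi$, while you apply it to $\phi_{i,\alpha}(A)$ directly and then verify basis-independence of $V\Psi^{-1}\transp{V}$ — an immaterial reordering). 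Your explicit attention to the rank-revealing property of complete-pivoting Cholesky on positive semidefinite matrices is a point the paper asserts without comment, and invoking it as a classical fact is entirely adequate.
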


\section{Relations to RDAG models}
\label{sec:relations}

In this section we explain how our notions of color perfect elimination ordering (cpeo), $2$-path regularity, CER graphs, and \BC/\cBC-spaces relate to the restricted DAG (RDAG) models of \citet{Seigal23}. 

A directed graph is a pair $\mathcal{G}=(V,\vec{E})$ with $p=|V|<\infty$ and $\vec{E}\subset V\times V$; we write $j\to i$ for $(j,i)\in\vec{E}$ and $j\nrightarrow i$ otherwise. A directed acyclic graph (DAG) has no directed cycles. A coloring of a DAG is a map
\[
c\colon V\cup\vec{E}\to [r+R],
\]
where $r=|c(V)|$ and $R=|c(\vec E)|$. 
By \cite[Definition~2.1]{Seigal23}, the restricted DAG model $\mathrm{RDAG}_{\mathcal{G}}^c$ on $(\mathcal{G},c)$ consists of positive definite matrices of the form $\Psi=(I_p-\Lambda)^\top\Omega^{-1}(I_p-\Lambda)$, where $\Omega=(\omega_{ij})_{i,j}\in\R^{p\times p}$ is diagonal with positive entries, $\Lambda=(\lambda_{ij})_{i,j}\in\R^{p\times p}$ and
\begin{itemize}
    \item $\lambda_{ij}=0$ unless $j\to i$ in $\mathcal{G}$,
    \item $c(j\to i)=c(l\to k)\implies \lambda_{ij}=\lambda_{kl}$,
    \item $c(i)=c(j)\implies \omega_{ii}=\omega_{jj}$.
\end{itemize}
Following \cite[Definition~2.5]{Seigal23}, a coloring $c$ is compatible if \mbox{$c(V)\cap c(\vec{E})=\emptyset$} and
\[
c(j\to i)=c(l\to k)\implies c(i)=c(k).
\]
Under compatibility, \cite[Proposition~2.7]{Seigal23} shows that 
\[
\mathrm{RDAG}_{\mathcal{G}}^c = \{a^\top a \colon a\in A(\mathcal{G},c)\},
\]
where
\[
A(\mathcal{G},c)
= \left\{ a\in\mathrm{GL}_p \colon
\begin{array}{@{}ccl@{}}
i\neq j \ \text{and}\ j\nrightarrow i &\Longrightarrow& a_{ij}=0\\
c(i)=c(j) &\Longrightarrow& a_{ii}=a_{jj}\\
c(j\to i)=c(l\to k) &\Longrightarrow& a_{ij}=a_{kl}
\end{array}
\right\}.
\]

Let $\ch(i) = \{k\in V\colon i\to k\in\mathcal{G}\}$. Given a vertex $i\in V$, let $(\mathcal{G}_i,c)$ be the colored subgraph on $\{i\}\cup\ch(i)$, with edges $i\to k$ for $k\in\ch(i)$ and inherited colors. For an edge $j\to i$, let $(\mathcal{G}_{(j\to i)},c)$ be the colored directed multigraph on $\{i\}\cup (\ch(i)\cap\ch(j))$ having, for each $k\in\ch(i)\cap\ch(j)$, two parallel edges $i\to k$ colored $c(i\to k)$ and $c(j\to k)$. Writing $(\mathcal{G}_1,c_1)\cong(\mathcal{G}_2,c_2)$ for isomorphism of colored graphs, \cite[Theorem~3.4]{Seigal23} states that, under compatibility,
$\mathrm{RDAG}_{\mathcal{G}}^c$ coincides with an RCON model if and only if
\begin{align}\label{eq:i-iii}
\begin{split}
(i)\ & \ \mbox{there is no induced subgraph $i\to k\leftarrow j$ with $i$ and $j$ non-adjacent;}\\
(ii)\ & \ c(i)=c(j)\implies (\mathcal{G}_i,c)\cong (\mathcal{G}_j,c);\\
(iii)\ & \ c(j\to i)=c(l\to k)\implies (\mathcal{G}_{(j\to i)},c)\cong (\mathcal{G}_{(l\to k)},c).
\end{split}
\end{align}
The undirected graph $G=(V,E)$  underlying this RCON model is the skeleton of $\mathcal{G}=(V,\vec{E})$ (so that $\{i,j\}\in E$ if and only if $(i\to j)\in\vec{E}$ or $(j\to i)\in\vec{E}$) and the coloring $\mathcal{C}$ of $G$ is inherited from $(\mathcal{G},c)$. 
We obtain the following characterization of such graphs. 

\begin{thm}\label{thm:RDAG} Let $(\mathcal{G},c)$ be a colored DAG with compatible coloring, and let $(G,\mathcal{C})$
be its skeleton colored graph (i.e., $G$ is the skeleton of $\mathcal{G}$ and $\mathcal{C}$ is inherited from $c$).
Then $(\mathcal{G},c)$ satisfies (i)--(iii) in \eqref{eq:i-iii} if and only if
$(G,\mathcal{C})$ is a symmetric CER graph and $G$ has no edges between vertices of the same color.
\end{thm}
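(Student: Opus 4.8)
The plan is to set up a precise dictionary between the combinatorics of the given oriented DAG and the cpeo-structure of its skeleton, and then read conditions (i)--(iii) off the $2$-path counts. The key translations are: a reverse-topological ordering of $\mathcal{G}$ is a perfect elimination ordering of $G$; under the color-ordering in which children precede parents, the intermediate vertices counted by $m_{v\to w}$ for a skeleton edge $\{v,w\}$ are exactly the common children $\ch(v)\cap\ch(w)$; the colored multigraph $(\mathcal{G}_{(j\to i)},c)$ records, for each common child, the \emph{unordered} pair of colors on its two parallel edges, so its isomorphism type is precisely the symmetrized count $m_{v\leftrightarrow w}$; and the out-star $(\mathcal{G}_i,c)$ records the multiset of (child-color, edge-color) pairs at $i$, which I will match to the diagonal loop-values $m_{i\to i}(e,e)$. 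Throughout, the hypothesis that $G$ has no monochromatic edges is what forces every such intermediate vertex to have strictly smaller color than both endpoints, so it is genuinely a common child, and it also collapses the color set $F$ of Definition~\ref{def:2_path_regularity} to the loop-colors, which I use below.

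For the forward implication I would first extract the vertex ordering. From (i), which is exactly the statement that every vertex has a clique of parents, the skeleton $G$ is decomposable and any reverse-topological order is a peo. From (ii) I would show, by induction on the out-height $h(v)$ (longest directed path out of $v$), that $c(v)=c(w)$ implies $h(v)=h(w)$: the isomorphism $(\mathcal{G}_v,c)\cong(\mathcal{G}_w,c)$ matches children color-for-color, and matched children already have equal heights by induction. Ordering the color classes by increasing height then yields an ordered partition with respect to which every edge runs from a later (higher) to an earlier (lower) class; in particular there are no monochromatic edges, and the simpliciality guaranteed by (i) shows this ordering is a cpeo. It remains to verify $2$-path regularity with respect to it. For the loops, $m_{i\to i}(e,e)$ equals the number of children of $i$ joined by an edge of color $e$, so (ii) gives (M1) on loop-colors; for the edges, the identification $m_{v\to w}\leftrightarrow(\mathcal{G}_{(j\to i)},c)$ turns (iii) into constancy of $m_{v\leftrightarrow w}$ on edge-color classes, i.e. (M1). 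Finally (M2) is automatic, since under no monochromatic edges $F$ consists only of loop-colors and $m_{v\to w}\big|_{F\times F}$ vanishes for $v\neq w$. Hence $(G,\mathcal{C})$ is a symmetric CER graph.

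For the converse I would start from a cpeo $(V_{\eta_1},\dots,V_{\eta_r})$ witnessing the symmetric CER property and orient every edge $\{v,w\}$ from its later-$\eta$ endpoint to its earlier one; since no edge is monochromatic this is well defined, and because the orientation strictly decreases the color position it is acyclic, giving a DAG whose skeleton is $G$. The one input needed to match the given colored DAG is that the edge color determines the child (lower) color: taking the loop at the lower endpoint as the intermediate vertex $u=v$, one checks $m_{v\leftrightarrow w}(c(v),c(v,w))=1$ while $m_{v\leftrightarrow w}(a,c(v,w))=0$ for every other loop-color $a$, so (M1) forces same-colored edges to share the child-color; this is exactly compatibility $c(j\to i)=c(l\to k)\Rightarrow c(i)=c(k)$. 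Condition (i) is then immediate from the cpeo: a common child $k$ of non-adjacent $i,j$ would contradict the simpliciality of $k$ in $G[V_{\ge c(k)}]$. Condition (ii) follows by combining the loop-case of (M1) (equal $e$-child counts for same-colored vertices) with the child-regularity just established (so the edge-color multiset determines the (child-color, edge-color) multiset), and (iii) is the edge-case of (M1) read through the multigraph identification.

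The main obstacle, and the step I would treat most carefully, is the exact equivalence (iii)$\Leftrightarrow$(M1): I must check that an isomorphism of the multigraphs $(\mathcal{G}_{(j\to i)},c)$ sees only the \emph{unordered} pair of colors $\{c(i\to k),c(j\to k)\}$ attached to each common child $k$, so that it corresponds to the symmetrized function $m_{v\leftrightarrow w}$ rather than to the finer ordered count $m_{v\to w}$, and conversely that (M1) recovers the full multigraph. Equally delicate is the requirement that the orientation be consistent with the color ordering: in the forward direction this consistency is not assumed but derived from (i) and (ii) via the height function, while in the converse it is built into the construction by orienting along $\eta$. Keeping the loop-colors and edge-colors separate in all the $2$-path bookkeeping---so that the intermediate vertices are precisely the common children and the set $F$ reduces to loop-colors---is what makes each of these identifications exact.
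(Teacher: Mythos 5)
Your forward direction is sound and follows the paper's route: like the paper, you identify the $2$-path counts $m_{v\leftrightarrow w}$ (taken with respect to an inverse topological order whose color classes form consecutive blocks) with counts of common children together with their unordered pair of incoming edge colors, you match condition (ii) to (M1) on loop colors and condition (iii) to (M1) on edge colors, and you observe that (M2) is vacuous once monochromatic edges are excluded. The one place you genuinely differ is in producing the block-consecutive inverse topological order: the paper (Lemma~\ref{lem:cpeoRDAG}) builds an auxiliary directed graph $H$ on the set of colors, shows $H$ is acyclic by iterating condition (ii) to create an impossible infinite directed walk, and topologically sorts $H$; you instead prove by induction that (ii) forces same-colored vertices to have equal out-height and order the classes by height. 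Your variant is correct, and it yields the absence of monochromatic edges (the paper's separate Lemma~\ref{lem:no-same-colour-edges}) as a free byproduct.

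The converse, however, has a genuine gap. The theorem fixes a colored DAG $(\mathcal{G},c)$ and asserts that \emph{this} DAG satisfies (i)--(iii) of \eqref{eq:i-iii} whenever its skeleton is symmetric CER without monochromatic edges. You discard the given orientation, re-orient every edge of $G$ along the cpeo (from the later color class to the earlier one), and verify (i)--(iii) for that constructed DAG. This proves an existential statement---\emph{some} compatible orientation of the skeleton satisfies (i)--(iii)---but not the stated one, since nothing in your argument relates the constructed orientation to the given $\mathcal{G}$. The two can genuinely differ: for a path $i - k - j$ with all vertices and edges distinctly colored, the skeleton is symmetric CER with no monochromatic edges, and the orientation $i\to k\leftarrow j$ is a compatible colored DAG containing an immorality, so (i) fails for it even though it holds for your cpeo-induced re-orientation. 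The paper's proof, by contrast, stays with the given DAG throughout: after checking that any vertex order consistent with the cpeo is a peo of $G$, it invokes Lemma~\ref{lem:revtop_is_peo} both to conclude that (i) holds for $\mathcal{G}$ and to identify that peo with an inverse topological order of $\mathcal{G}$, which is precisely what licenses the common-child reading of $m_{v\leftrightarrow w}$ for the given orientation and hence the equivalence of (M1) with (ii)--(iii). Some bridge of this kind between peos of the skeleton and the orientation of the given $\mathcal{G}$ is indispensable in the converse, and your proposal contains none.
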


As a consequence, we obtain the following strict inclusion of models
\[
\mathrm{RDAG}\cap \mathrm{RCON} \subsetneq \mathrm{\cBC}\cap \mathrm{RCON}.
\]
Indeed, note that the RCON space
\[
\Zsp = \left\{ \begin{bmatrix}
    a & b \\ b & a
\end{bmatrix}\colon a,b\in\R \right\}
\]
is a \cBC-space, but does not arise as the skeleton RCON model of any compatible colored DAG.  Indeed, for 
$p=2$ any compatible colored DAG has at most one directed edge, so the induced skeleton RCON constraints cannot enforce the equality 
$x_{11}=x_{22}$, while keeping $x_{12}$ free.

\cite{Seigal23} also formulates a condition under which the set $A(\mathcal{G},c)$ forms a group, \cite[Proposition~B.2,~The~butterfly~criterion]{Seigal23}. 
Concretely, for a directed edge $j\to i$ define the set of vertices
\[
b(j\to i) = \{k\in V \colon j\to k\to i \text{ is a directed path in }\mathcal{G}\}.
\]
The butterfly graph is the subgraph of $\mathcal{G}$ on $\{i,j\}\cup b(j\to i)$ with edges $j\to k$ and $k\to i$ for $k\in b(j\to i)$, inheriting all colors from $(\mathcal{G},c)$. The butterfly criterion asserts that, for a compatible coloring $c$, the set $A(\mathcal{G},c)$ is a group if and only if $\mathcal{G}$ is transitive (i.e.\ $k\to j$ and $j\to i$ imply $k\to i$) and any two directed edges of the same color have isomorphic butterfly graphs. When $A(\mathcal G,c)$ is a group, $\mathrm{RDAG}_{\mathcal G}^c$ is a Gaussian group model in the sense of \citet{GGM}. 

We now state the analogous criterion for $\tri(\Zsp)^+$ (recall Remark \ref{rem:Tri+}), the cone of block lower triangular matrices whose diagonal blocks are positive definite.

\begin{thm}\label{thm:group}
    Assume that $(G,\mathcal{C})$ is a CER graph. Let $\Zsp=\coloredZ$ be the corresponding \BC-space. Then, 
    $\tri(\Zsp)^+$ forms a group if and only if 
\begin{itemize}
    \item[(M3)] $c(v,w)=c(v',w') \implies \mu_{v\to w}=\mu_{v'\to w'}$, 
    where
\begin{align*}
\mu_{v\to w}(k, h) = \Bigl| \Big\{ u \in V \colon c(v,u)=k\mbox{ and } c(u,w)=h\mbox{ and } c(v)\leq c(u)\leq c(w)\Big\} \Bigr|.
\end{align*}
\end{itemize}
\end{thm}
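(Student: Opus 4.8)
The plan is to prove the equivalence in two stages: first reduce ``$\tri(\Zsp)^+$ is a group'' to the purely algebraic statement that the linear space $\tri(\Zsp)=\{\tri(x)\colon x\in\Zsp\}$ is closed under matrix multiplication, and then characterise that closure combinatorially as (M3). Note first that $I_p\in\tri(\Zsp)$, since $I_p\in\Zsp$ by (Z0) and $\tri(I_p)=I_p$. If $\tri(\Zsp)^+$ is a group it is closed under products; as it is a relatively open cone spanning $\tri(\Zsp)$, bilinearity of the product (polarization) promotes this to $\tri(\Zsp)\cdot\tri(\Zsp)\subseteq\tri(\Zsp)$. Conversely, if $\tri(\Zsp)$ is closed under multiplication it is a finite-dimensional unital associative matrix algebra, so for any invertible $T\in\tri(\Zsp)$ Cayley--Hamilton expresses $T^{-1}$ as a polynomial in $T$, whence $T^{-1}\in\tri(\Zsp)$; since the diagonal blocks of a block lower triangular product (resp.\ inverse) are the products (resp.\ inverses) of the diagonal blocks, once these are symmetric and positive definite the set $\tri(\Zsp)^+$ is stable under both operations and is a group. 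The symmetry of the diagonal blocks of a product is not automatic and will be supplied by (M3) below (it forces each $M_i(\Zsp)$ to be commutative).

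For the combinatorial step I would compute products entrywise. Any $T\in\tri(\Zsp)$ is block lower triangular, and by the defining equalities of $\coloredZ$ its entry $T_{vu}$ depends only on the extended-edge colour $c(v,u)$, vanishing on non-edges and on the strict upper part. Writing $(T_1)_{vu}=t^{(1)}_{c(v,u)}$, $(T_2)_{uw}=t^{(2)}_{c(u,w)}$ gives
\[
(T_1T_2)_{vw}=\sum_{k,h} N_{vw}(k,h)\,t^{(1)}_k\, t^{(2)}_h,
\]
where $N_{vw}(k,h)=|\{u\colon c(v,u)=k,\ c(u,w)=h,\ c(w)\le c(u)\le c(v)\}|$, and a relabelling identifies $N_{vw}(k,h)=\mu_{w\to v}(h,k)$ with the count of (M3). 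Because the scalars $t^{(1)}_k,t^{(2)}_h$ are free, the inclusion $T_1T_2\in\tri(\Zsp)$ for all $T_1,T_2$ is equivalent to: (a) $N_{vw}\equiv 0$ at every lower-triangular \emph{non-edge} position, and (b) $N_{vw}=N_{v'w'}$ whenever $(v,w)$, $(v',w')$ are lower-triangular positions carrying the same colour. I would show that (a) and (b) together are exactly (M3): constancy of $\mu$ on an edge-colour class is (b), while for a non-edge the opposite orientation has empty support, so the comparison in (M3) forces the nonzero orientation to vanish, which is (a)---the ``no fill-in'' (transitivity) property that a cpeo by itself does not guarantee (e.g.\ the path $\bullet-\bullet-\bullet$).

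The diagonal blocks require separate attention, and this is where (M3) yields the commutativity used in the first paragraph. Restricting (M3) to pairs $v,w$ in a single colour class $V_i$ forces the intermediate vertex into $V_i$, so that $N_{vw}$ reduces to the intersection numbers of Section~\ref{sec:intersection}; by Proposition~\ref{prop:T_diag_properties} this says $A^{(i)}_k A^{(i)}_h\in\Zsp_i$ and, via the reverse-orientation comparison, $A^{(i)}_kA^{(i)}_h=A^{(i)}_hA^{(i)}_k$. Hence every $M_i(\Zsp)$ is commutative, (Z2) holds, $(G,\mathcal C)$ is symmetric CER, and $\tri(\Zsp)^+=\hh^+$ (cf.\ Remark~\ref{rem:Tri+}); commuting symmetric positive definite diagonal blocks have symmetric positive definite products and inverses, closing the gap left above. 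Assembling (a), (b) and the diagonal commutativity gives closure of $\tri(\Zsp)$, and conversely closure returns each ingredient, completing the equivalence. As a check, specialising to graphs with no same-colour edges recovers, through Theorem~\ref{thm:RDAG}, the transitivity-plus-isomorphic-butterflies form of the criterion of \cite{Seigal23}.

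The step I expect to be the main obstacle is the exact matching in the middle paragraph, in particular the bookkeeping of the endpoint terms $u\in\{v,w\}$. These loop/diagonal contributions enter $N_{vw}$ on edge positions and must be shown to be automatically consistent across a colour class (so that they neither obstruct (b) nor spuriously forbid legitimate inter-class edges), while on non-edge positions they drop out and leave precisely the proper-path count that (M3) sets to zero. Proving that the constancy content of (M3), read with the correct orientation, is equivalent to the conjunction of transitivity, equality-preservation and diagonal commutativity---neither stronger nor weaker---is the crux; the algebra reduction and the Cayley--Hamilton inverse are then routine.
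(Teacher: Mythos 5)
Your proposal is correct and takes essentially the same route as the paper: reduce the group property of $\tri(\Zsp)^+$ to multiplicative closure of the linear space $\tri(\Zsp)$, compute products entrywise so that $(T_1T_2)_{vw}$ becomes the color-interval-restricted $2$-path count $\mu$, and identify closure of the span with the constancy condition (M3). You additionally supply details the paper compresses into ``not hard to show''---the polarization and Cayley--Hamilton arguments for the group$\iff$closure equivalence, and the explicit separation of non-edge vanishing (fill-in) from constancy on color classes---so your write-up is, if anything, more careful about the orientation and endpoint bookkeeping than the published proof.
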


\section{Conclusion}\label{sec:conclusion}

In this paper, we introduced a new subclass of RCON models \cite{HL08} defined by Color Elimination-Regular (CER) and symmetric CER graphs. On the algebraic side, these models are supported on Block-Cholesky (\BC) and Diagonally Commutative Block-Cholesky (\cBC) spaces, which provide a natural framework for expressing the Diaconis--Ylvisaker normalizing constants as gamma-type integrals over cones of structured precision matrices. For such spaces we derived an explicit closed-form formula for these integrals in terms of structure constants, and we developed an efficient methodology, combining the Random Method for diagonal blocks with the Gram-Cholesky procedure for off-diagonal interactions, to compute all ingredients needed for the evaluation of the normalizing constants.

Our theoretical analysis highlights connections between these undirected colored models and existing structures in algebraic statistics. We demonstrated that the combinatorial conditions defining CER graphs, particularly $2$-path regularity, naturally extend the structural constraints found in RDAG models \cite{Seigal23}, offering a unified perspective on tractability in both directed and undirected colored graphs. Additionally, in the symmetric case, our framework reveals that the diagonal blocks of the precision matrices form Bose-Mesner algebras, linking statistical inference to the theory of association schemes.

Several directions for future work follow naturally from our results. A first step is to design Markov chain Monte Carlo algorithms whose state space is restricted to CER or symmetric CER graphs, thereby enabling Bayesian structure learning within this tractable subclass. Beyond this, it would be desirable to develop Monte Carlo methods for approximating normalizing constants outside our class (for example for general $2$-path regular graphs), extending the approach of \cite{A-KM05}, as well as to derive analytic approximations in the spirit of \cite{MML23} for more general colored graphical models. Finally, a deeper investigation of the interplay between \BC-/\cBC-spaces, association schemes, and RDAG models may reveal further algebraic structure that can be exploited for efficient Bayesian inference in high-dimensional settings, thereby removing a major bottleneck for this class of models.

\section{Proofs}\label{sec:proofs}




\subsection{Proof of Lemma \ref{lem:GG}}
\begin{proof}[Proof of Lemma \ref{lem:GG}]
We have $c(v)=i$ if and only if $v\in V_{\eta_i}$. 
(i) follows from the fact that $\sigma$ is an automorphism of $(G,\mathcal{C})$. Indeed, we have 
\begin{align*}
 m_{\sigma(v)\to \sigma(w)}(k,h) &= 
\Bigl| \Big\{ u \in V_{\le c(\sigma(v)) \wedge c(\sigma(w))} \colon c(\sigma(v),u)=k\mbox{ and } c(u,\sigma(w))=h\Big\} \Bigr| \\
& = 
\Bigl| \Big\{ \sigma(u') \in V_{\le c(\sigma(v) \wedge c(\sigma(w))}) \colon c(\sigma(v),\sigma(u'))=k\mbox{ and } c(\sigma(u'),\sigma(w))=h\Big\} \Bigr| \\
& = 
\Bigl| \Big\{ u' \in \sigma^{-1}(V_{\le c(v) \wedge c(w)}) \colon c(v,u')=k\mbox{ and } c(u',w)=h\Big\} \Bigr|,
\end{align*}
where we have used the fact that $c(\sigma(v))=c(v)$ and $c(\sigma(v),\sigma(w))=c(v,w)$ for any $v,w\in V$ and $\sigma\in\Gamma$. 
By the first fact and the definition of $V_{\leq \cdot}$, we have 
\begin{align*}
V_{\le c(v) \wedge c(w)} &= \{u\in V\colon c(u)\leq c(v)\wedge c(w)\} = \{u\in V\colon c(\sigma(u))\leq c(v)\wedge c(w)\}  \\
&=  \sigma^{-1}(V_{\le c(v) \wedge c(w)}),
\end{align*}
which ends to proof of (i).

(ii) If $\Gamma$ is generously transitive, then (ii) follows directly from (i): in such case for any $v,w$ in the same $V$-orbit, there exists $\sigma\in\Gamma $ such that $(\sigma(v),\sigma(w))=(w,v)$.

For $v,w\in V_i$ (i.e., $v,w\in V$ with  $c(v)=c(w)=i$), we have for $k,h\in F_i$, 
\[
m_{v\to w}(k,h) = \{ u\in V_i\colon c(v,u)=k\mbox{ and }c(u,w)=h\}. 
\]
If $\Gamma$ is cyclic, then its restriction to each orbit $V_i$ is cyclic and transitive. Thus, there exists $\tau\in\Gamma$ for which $V_i= \{\tau^t(v)\}_{t=0}^{|V_i|-1}$ and there exists $d\in\{0,\ldots,|V_i|-1\}$ such that 
$w=\tau^d(v)$. Denote $n_i=|V_i|$. Then, 
\[
m_{v\to w}(k,h) = |\{ t\in \{0,\ldots,n_i-1\}\colon c(v,\tau^t(v))=k\mbox{ and }c(\tau^t(v),\tau^d(v))=h\}|. 
\]
Since $\tau\in\mathrm{Aut}(G,\mathcal{C})$, we have 
\[
c(v,\tau^t(v))=k \iff c(\tau^{d-t}(v),\tau^d(v))=k 
\]
and similarly
\[
c(\tau^t(v),\tau^d(v))=h\iff c(v,\tau^{d-t}(v))=h. 
\]
Thus,
\begin{align*}
m_{v\to w}(k,h) &= |\{ t\in \{0,\ldots,n_i-1\}\colon c(\tau^d(v),\tau^{d-t}(v))=k\mbox{ and }c(\tau^{d-t}(v),v)=h\}|\\
&=m_{w\to v}(k,h). 
\end{align*}


\end{proof}

\subsection{Proof of Lemma \ref{lem:RCOP}}

\begin{proof}[Proof of Lemma \ref{lem:RCOP}]
\begin{itemize}
    \item[(i)] Assume that $c(v,w)=c(v',w')$. Then, there exists $\sigma\in \Gamma$ such that $\{\sigma(v),\sigma(w)\}=\{v',w'\}$. We have two cases. Suppose that $\sigma(v) = v'$. Then, $\sigma(w)=w'$ and, by Lemma \ref{lem:GG} (i), 
\begin{align*}
m_{v\to w}(k, h) = m_{v'\to w'}(k, h). 
\end{align*}
On the other hand, if  $\sigma(v) = w'$ and $\sigma(w)=v'$, we arrive at 
\begin{align*}
m_{v\to w}(k, h) = m_{w'\to v'}(k, h). 
\end{align*}
In both cases, 
\[
m_{v\leftrightarrow w}(k, h) = m_{v\to w}(k,h)+m_{w\to v}(k,h) =   m_{v'\to w'}(k,h)+m_{w'\to v'}(k,h) = m_{v'\leftrightarrow w'}(k, h), 
\]
which implies $2$-path regularity. 
    \item[(ii)] The validity of this construction follows from the fact that $\Gamma$ is an automorphism of $G$. Thus, if $v_{\sigma_1}$ is simplicial in $G$, then all vertices in the same color are also simplicial in $G$. We iterate this procedure to obtain the cpeo. 
    \item[(iii)] Since $G$ is decomposable, there exists a perfect elimination ordering of its vertices. By (ii), there exist a cpeo. By (i), $(G,\mathcal{C})$ satisfies (M1) with respect to this cpeo. 
    \item[(iv)] By previous point and Lemma \ref{lem:GG} (ii), we see that $(G,\mathcal{C})$ is to symmetric $2$-path regular. Thus, the assertion follows from (iii). 
\end{itemize}
\end{proof}

\subsection{Proof of Theorem \ref{thm:ZGCisNice}}

The proof of Theorem \ref{thm:ZGCisNice} follows from Lemmas \ref{lem:Z1}, \ref{lem:Z2} and \ref{lem:Z0}.

We introduce some additional notation. Recall that $c(v,w) = k \iff \{v,w\}\in \tilde{E}_k$.
\begin{defin}[Base of $\coloredZ$]\label{def:base}
Let \((G,\mathcal{C})\) be a colored graph and let \((\tilde{G},\tilde{\mathcal{E}}) = \RefExt(G,\mathcal{C})\) denote its reflexive extension.
For each $k\in [r+R]$, define the matrix $J^k\in\{0,1\}^{p\times p},$ by
\[
J^k_{vw} =\begin{cases}
1, & \text{if }c(v,w) = k\\
0, & \text{otherwise},
\end{cases}
\]
for $v,w\in V$.

Since every matrix in \(\coloredZ\) is constant on color classes, the set $\{J^k\colon k\in [r+R]\}$ forms a basis of $\coloredZ$.

\end{defin}

\begin{prop}\label{pro:m}
For all $\{v,w\}\in \tilde{E}$ and all $k,h\in [r+R]$, we have 
\begin{align*}
\left[\tri\left(J^k\right)\tri\left(J^h\right)^\top\right]_{vw} = m_{v\to w}(k,h).     
\end{align*}
\end{prop}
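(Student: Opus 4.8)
The plan is to prove the identity by a direct entrywise computation of the matrix product, after recording how the block-triangular truncation acts on the base matrices $J^k$.

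First I would note that, in the vertex labeling induced by the cpeo $(V_{\eta_1},\ldots,V_{\eta_r})$, the color $c(v)$ of a vertex coincides with the index of the diagonal block containing $v$. The operator $\tri$ retains a block $(a,b)$ exactly when $a\geq b$ and acts as the identity on the diagonal blocks, so for all $v,u\in V$
\[
\tri(J^k)_{vu} = J^k_{vu}\,\mathbf{1}[c(v)\geq c(u)],
\]
and $J^k_{vu}=1$ if and only if $c(v,u)=k$. I would emphasize here that $J^k$ is symmetric, since $c(v,u)=c(u,v)$ depends only on the unordered extended edge $\{v,u\}$; this symmetry is what lets me read off the edge colors correctly once the transpose is taken.

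Next I would expand the product using $\tri(J^h)^\top_{uw}=\tri(J^h)_{wu}$:
\[
[\tri(J^k)\tri(J^h)^\top]_{vw}
= \sum_{u\in V}\tri(J^k)_{vu}\,\tri(J^h)_{wu}
= \sum_{u\in V} J^k_{vu}J^h_{wu}\,\mathbf{1}[c(v)\geq c(u)]\,\mathbf{1}[c(w)\geq c(u)].
\]
The two indicators collapse into the single condition $c(u)\leq c(v)\wedge c(w)$, equivalently $u\in V_{\leq c(v)\wedge c(w)}$, while the factor $J^k_{vu}J^h_{wu}$ is $1$ precisely when $c(v,u)=k$ and $c(u,w)=h$ (again using symmetry of $J^h$ to interpret $J^h_{wu}$ as the indicator of $c(u,w)=h$). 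Hence the sum counts exactly those $u\in V_{\leq c(v)\wedge c(w)}$ with $c(v,u)=k$ and $c(u,w)=h$, which is the definition of $m_{v\to w}(k,h)$.

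The computation is essentially routine, so there is no deep obstacle; the only point demanding care is the bookkeeping of the truncation convention. Specifically, I must verify that the diagonal blocks are kept in full, so that the inequalities $c(v)\geq c(u)$ and $c(w)\geq c(u)$ are non-strict and jointly encode membership in $V_{\leq c(v)\wedge c(w)}$ rather than a strict sub-level set, and I must track the symmetry of the base matrices when passing $J^h$ through the transpose. Getting either convention wrong would shift the level set or swap an edge color, so these are precisely the places where an error could enter.
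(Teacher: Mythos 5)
Your proof is correct and follows essentially the same route as the paper's: both compute the $(v,w)$ entry of $\tri(J^k)\tri(J^h)^\top$ directly, observe that the triangular truncation restricts the intermediate vertex $u$ to $V_{\leq c(v)\wedge c(w)}$, and identify $J^k_{vu}J^h_{uw}=1$ with the condition $c(v,u)=k$, $c(u,w)=h$. The only cosmetic difference is that you make the symmetry of $J^h$ explicit when handling the transpose, whereas the paper writes $[\tri(J^h)^\top]_{uw}$ directly; the substance is identical.
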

\begin{proof}[Proof of Proposition \ref{pro:m}]
Fix $k,h\in [r+R]$ and define 
\[
S \coloneq \tri(J^k)\cdot\tri(J^h)^\top.
\]
We have 
\begin{align*}
\tri(J^k)_{vu} &= \begin{cases}
    J_{vu}^k, & \mbox{ if }c(u)\leq c(v), \\ 0 & \mbox{otherwise}.
    \end{cases}
\intertext{and similarly}
[\tri(J^h)^\top]_{uw} &= \begin{cases}
    J_{uw}^h, & \mbox{ if }c(u)\leq c(w), \\ 0 & \mbox{otherwise}.
\end{cases} 
\end{align*}
Thus, for any $v,w\in V$, we obtain
\begin{align*}
S_{vw} &= \left[\tri(J^k)\cdot\tri(J^h)^\top\right]_{vw} = \sum_{u\in V} \tri(J^k)_{vu} [\tri(J^h)^\top]_{uw}\\
&= \sum_{u\in V_{\le c(v)\wedge c(w)}}J_{vu}^kJ_{uw}^h.
\end{align*}
By the definition of $J^k$ and $J^h$, the product $J_{vu}^kJ_{uw}^h$ is $1$ exactly when $c(v,u)=k$ and $c(u,w)=h$; otherwise, it is $0$. Hence, the sum counts the number of $2$-paths $v-u-w$ in the subgraph induced by the set $\{v,w\}\cup V_{\leq c(v)\wedge c(w)}$, where $c(v,u) = k$ and $c(u,w)=h$. Therefore, 
\[
S_{vw} = m_{v\to w}(k,h).
\]
\end{proof}

\begin{lemma}\label{lem:Z1}
If \((G,\mathcal{C})\) is a CER graph, then $\coloredZ$ satisfies condition (Z1). 
\end{lemma}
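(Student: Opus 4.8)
The plan is to verify condition (Z1) directly on the basis $\{J^k\}_{k\in[r+R]}$ of $\coloredZ$ from Definition~\ref{def:base}, using the entrywise identity of Proposition~\ref{pro:m}. Since $\tri$ is linear, writing $x=\sum_k \alpha_k J^k\in\coloredZ$ gives $\tri(x)\tri(x)^\top=\sum_{k,h}\alpha_k\alpha_h\,\tri(J^k)\tri(J^h)^\top$, so Proposition~\ref{pro:m} yields, for every $v,w\in V$,
\[
[\tri(x)\tri(x)^\top]_{vw} = \sum_{k,h}\alpha_k\alpha_h\,m_{v\to w}(k,h) = \tfrac12\sum_{k,h}\alpha_k\alpha_h\,m_{v\leftrightarrow w}(k,h),
\]
where the second equality symmetrizes in $(k,h)$ using $\alpha_k\alpha_h=\alpha_h\alpha_k$ and the definition $m_{v\leftrightarrow w}(k,h)=m_{v\to w}(k,h)+m_{v\to w}(h,k)$. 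The matrix $\tri(x)\tri(x)^\top$ is automatically symmetric, so it remains only to check the two defining constraints of $\coloredZ$: constancy of the entries on extended-edge color classes, and the correct zero pattern.

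The color-constancy constraint is immediate from $2$-path regularity and contributes nothing hard. Condition (M1) states that $c(v,w)=c(v',w')$ implies $m_{v\leftrightarrow w}=m_{v'\leftrightarrow w'}$ as functions on $[r+R]^2$; substituting this into the displayed formula gives $[\tri(x)\tri(x)^\top]_{vw}=[\tri(x)\tri(x)^\top]_{v'w'}$ whenever $\{v,w\},\{v',w'\}\in\tilde E$ share a color. This is precisely the equality constraint required for membership in $\coloredZ$.

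The zero-pattern constraint is the main obstacle, and it is exactly where the cpeo hypothesis is indispensable. I would argue from the direct expression $[\tri(x)\tri(x)^\top]_{vw}=\sum_{u\colon c(u)\le c(v)\wedge c(w)} x_{vu}\,x_{wu}$, read off from the definition of $\tri$ as in the proof of Proposition~\ref{pro:m}. Fix distinct $v,w$ with $\{v,w\}\notin E$; I must show every summand vanishes. A summand is nonzero only when $x_{vu}\neq0$ and $x_{wu}\neq0$, i.e. $u$ is equal or adjacent to both $v$ and $w$, with $c(u)\le c(v)\wedge c(w)$. The cases $u=v$ and $u=w$ each force $\{v,w\}\in\tilde E$, contradicting $v\nsim w$. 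For $u\notin\{v,w\}$ one has $u\sim v$, $u\sim w$ and $c(u)\le c(v),c(w)$, so $v$ and $w$ both lie in $V_{\ge c(u)}:=\{u'\in V\colon c(u')\ge c(u)\}$ and belong to the neighborhood of $u$ inside $G[V_{\ge c(u)}]$. Since the ordering is a cpeo, $u$ is simplicial in the subgraph induced by its own color class and all subsequent ones, namely $G[V_{\ge c(u)}]$, so this neighborhood is a clique and hence $v\sim w$ — a contradiction. Thus no nonzero summand exists and the off-support entry is $0$.

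Combining the three observations — automatic symmetry, equality of entries on color classes via (M1), and vanishing off the support via the cpeo — shows $\tri(x)\tri(x)^\top\in\coloredZ$ for all $x\in\coloredZ$, which is exactly (Z1). The only genuinely nontrivial point is the zero-pattern step, where decomposability, in the form of simpliciality along the cpeo, does the work; the equality constraints follow essentially for free from $2$-path regularity.
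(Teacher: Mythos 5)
Your proof is correct and follows essentially the same route as the paper's: both expand over the basis $\{J^k\}$ of $\coloredZ$ (the paper via polarization, you via direct quadratic expansion, which is the same substance), invoke Proposition~\ref{pro:m} to identify entries with symmetric $2$-path counts, deduce color-constancy from (M1), and kill the off-support entries using the cpeo. The only difference is presentational: where the paper tersely asserts that no $2$-paths exist between non-adjacent $v,w$ through $V_{\le c(v)\wedge c(w)}$, you spell out the underlying simpliciality argument, which is a faithful elaboration of the same step.
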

\begin{proof}
Let \((G,\mathcal{C})\) be a CER graph. Thus, there exists a vertex-color ordering that is a cpeo and $(G,\mathcal{C})$ is $2$-path regular with respect to this ordering. Without loss of generality, we assume that $(V_{1},\ldots,V_{\eta_r})$ is a cpeo.

Using the standard polarization identity and the linearity of the mapping $\tri$, together with the fact that $(J^k)_{k\in[r+R]}$ form a basis of $\coloredZ$, we see that condition (Z1) is equivalent to   
\begin{align}\label{eq:Z1}
\tri\left(J^k\right)\tri\left(J^h\right)^\top + \tri\left(J^h\right)\tri\left(J^k\right)^\top\in\coloredZ,
\end{align}
for all $k,h\in [r+R]$.

To prove \eqref{eq:Z1}, fix $k,h\in [r+R]$ and define $S = \tri(J^k)\cdot\tri(J^h)^\top$ so that \eqref{eq:Z1} is written as $S+S^\top\in\coloredZ$.  By Proposition \ref{pro:m}, we have $S_{vw} = m_{v\to w}(k,h)$.
 Using the identity $m_{w\to v}(k,h) = m_{v\to w}(h,k)$, we obtain  
 $(S+S^\top)_{vw} = m_{v\leftrightarrow w}(k,h)$. 
 
 Since $\coloredZ$ is already expressed in a block form dictated by vertex coloring, it suffices to verify the following two properties:
\begin{itemize}
    \item[(a)] (zero pattern) If all matrices in $\coloredZ$ have a zero in the $(v,w)$ entry, then \linebreak\mbox{$\left[S + S^\top\right]_{vw} = 0$}.
    \item[(b)] (entry equalities) If all matrices in $\coloredZ$ have equal entries at positions $(v,w)$ and $(v',w')$, then $[S + S^\top]_{vw} = [S + S^\top]_{v'w'}$.
\end{itemize}

For (a), suppose that $v$ and $w$ are distinct and $\{v,w\}\notin E$. Then there are no $2$-paths between $v$ and $w$ in the subgraph induced by the set $\{v,w\}\cup V_{\leq c(v)\wedge c(w)}$. Hence $m_{v\to w}\equiv m_{w\to v}\equiv 0$ and consequently $[S+S^\top]_{vw} = 0$. 

To show (b), suppose that $c(v,w) = c(v',w')$. 
By (M1), we have $m_{v\leftrightarrow w}(k,h) = m_{v'\leftrightarrow w'}(k,h)$, which directly implies
\[
[S+S^\top]_{vw} = [S+S^\top]_{v'w'}. 
\]
This completes the proof of (Z1) for the colored subspace $\coloredZ$.
\end{proof}

Recall the definition of $F_i$ from Section \ref{sec:2path}. 
For $i\in[r]$, we have
\[
F_i = \left\{c(v,w)\colon c(v)=i=c(w)\mbox{ and }\{v,w\}\in \tilde{E}\right\}
\]
and $F = \bigcup_{i\in [r]} F_i$.
We note $i\in F_i$ so the set $F_i$ is nonempty.

\begin{lemma}\label{lem:edge_regularity_on_diagonal}
Assume that $(G,\mathcal{C})$ is a CER graph. If $c(v,w)=k\in F_i$, then $c(v)=c(w)=i$.
\end{lemma}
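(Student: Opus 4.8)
The plan is to prove the statement directly from the $2$-path regularity condition (M1), which holds for a CER graph with respect to its cpeo. After relabeling so that the cpeo is the identity ordering, we have $c\colon V\to[r]$ and $V_{\le i}=V_1\cup\cdots\cup V_i$, and I recall the basic fact that loop colors $1,\dots,r$ color only loops, genuine edge colors lie in $[r+R]\setminus[r]$, and non-edges receive color $0$; hence $c(v,u)=j$ with $j\le r$ forces $u=v$ and $c(v)=j$. First I would dispose of the trivial case: the only loop color in $F_i$ is $i$ itself, so if $k\in F_i$ is a loop color then $k=i$, and $c(v,w)=i\le r$ forces $\{v,w\}$ to be a loop, whence $v=w$ and $c(v)=c(v,v)=i$. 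Thus it remains to treat the case $k>r$, where $\{v,w\}\in E$ is a genuine edge with $v\ne w$.

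The key idea is to exploit one carefully chosen value of the symmetric $2$-path function, namely $m_{\cdot\leftrightarrow\cdot}(i,k)$, pairing the loop color $i$ with the target edge color $k$. By the definition of $F_i$ there is a witness edge $\{v_0,w_0\}\in\tilde E$ with $c(v_0)=c(w_0)=i$ and $c(v_0,w_0)=k$. I would first compute $m_{v_0\leftrightarrow w_0}(i,k)$: since $i$ is a loop color, in $m_{v_0\to w_0}(i,k)$ the constraint $c(v_0,u)=i$ forces the intermediate vertex to be $u=v_0$ (admissible because $c(v_0)=i$ places it in $V_{\le i}$), so this term equals $1$; symmetrically $c(u,w_0)=i$ in $m_{v_0\to w_0}(k,i)$ forces $u=w_0$, contributing another $1$. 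Hence $m_{v_0\leftrightarrow w_0}(i,k)=2$.

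I would then transfer this value to $\{v,w\}$. Since $c(v,w)=k=c(v_0,w_0)$, condition (M1) gives $m_{v\leftrightarrow w}=m_{v_0\leftrightarrow w_0}$, so in particular $m_{v\leftrightarrow w}(i,k)=2$. Writing $a=c(v)$ and $b=c(w)$, I would evaluate $m_{v\leftrightarrow w}(i,k)=m_{v\to w}(i,k)+m_{v\to w}(k,i)$ directly. In $m_{v\to w}(i,k)$ the loop constraint $c(v,u)=i$ forces $u=v$, which is admissible only when $v\in V_{\le a\wedge b}$, i.e.\ when $a\le b$; so this term equals $1$ exactly when $a=i$ and $a\le b$, and $0$ otherwise. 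Symmetrically $m_{v\to w}(k,i)$ equals $1$ exactly when $b=i$ and $b\le a$. A short case check on the pair $(a,b)$ then shows that the sum equals $2$ if and only if $a=b=i$, giving $c(v)=c(w)=i$.

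The step that requires the most care is the bookkeeping of the restriction $u\in V_{\le c(v)\wedge c(w)}$ inside the $2$-path counts: it is precisely this ordering constraint (the conditions $a\le b$ and $b\le a$ above) that rules out a configuration such as $c(v)=i<c(w)$ from also producing the value $2$, and hence it is what forces \emph{both} endpoint colors rather than merely one. I would therefore make the admissibility conditions explicit throughout, since omitting them would weaken the conclusion to $c(v)=i$ or $c(w)=i$ alone. Notably, only (M1) and the loop-color convention are used; the cpeo chordality and condition (M2) play no role in this argument.
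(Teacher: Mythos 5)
Your proof is correct and follows essentially the same route as the paper: both arguments evaluate the symmetric $2$-path count $m_{\cdot\leftrightarrow\cdot}(i,k)$ at a witness pair inside $V_i$ (obtaining the value $2$ from the two forced loop-intermediate vertices), transfer this value to an arbitrary pair with $c(v,w)=k$ via (M1), and then use the fact that each of the two directed terms contributes at most $1$, so the value $2$ forces both $c(v)=i$ and $c(w)=i$. Your explicit bookkeeping of the admissibility constraint $u\in V_{\le c(v)\wedge c(w)}$ merely spells out what the paper's final step uses implicitly; there is no difference in substance.
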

\begin{proof}
Let $i\in [r]$ and $k\in F_i$. Then, there exists $v,w\in V_i$ such that $c(v,w)=k$. We have to show that for any $v',w'\in V$ such that $c(v',w') = k$ one has $c(v') = c(w') = i$.

Consider $2$-paths $v-u-w$ for $u\in V_{\le i}$. The only such path with $u-w$ edge being of color $i$ is for $u=w$ (as $i$ is a loop color). The color of the first edge in the path $u-w-w$, is $k$. So, $m_{v\to w}(k, i) =\Bigl| \{w \} \Bigr|= 1$. Similarly, $m_{v\to w}(i, k) =\Bigl| \{v \} \Bigr|= 1$. Therefore, we have $m_{v\leftrightarrow w}(i, k) = m_{v\to w}(k, i) + m_{v\to w}(i, k) = 2$.

Now, let $\{v',w'\}\in E_k$. We must show that $c(v')=c(w')=i$.
Since the graph is $2$-path regular, we have $m_{v\leftrightarrow w} = m_{v'\leftrightarrow w'}$. In particular, $m_{v'\leftrightarrow w'}(k,i) = m_{v\leftrightarrow w}(k,i) = 2$. Hence, there must be exactly two $2$-paths from $v'$ to $w'$ using colors $k$ and $i$, which gives:
\begin{align*}
 m_{v'\leftrightarrow w'}(k, i) =2.
\end{align*}
Again, since $i$ is a loop color, the only possible choice for intermediate vertex $u$ is $u=w'$ in the first case and $u=v'$ in the second, which forces $c(w') = i$ and $c(v') = i$, respectively, completing the proof of the claim.
\end{proof}

\begin{prop}\label{prop:T_diag_properties}
Assume that $(G,\mathcal{C})$ is a CER graph. Then, 
\begin{enumerate}
    \item For every $k\in F$ we have  $\bdiag(J^k) =J^k = \tri(J^k)$. 
    \item For every $k\in[r+R]\setminus F$ we have $\bdiag(J^k) =0$. 
    \item For all $k, h \in F$ we have 
    \[
\left[J^kJ^{h}\right]_{vw} =   m_{v\to w}(k,h). 
    \]
    \item Under (M2) for every $i\in [r]$ and for all $k,h\in F_i$ we have
    \[
J^kJ^h = J^hJ^k.
    \]
\end{enumerate}
\end{prop}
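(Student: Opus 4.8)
The plan is to prove the four parts in sequence, since part~(3) relies on part~(1) and part~(4) relies on part~(3). The single structural fact driving everything is Lemma~\ref{lem:edge_regularity_on_diagonal}: any extended edge whose color lies in $F$ joins two vertices of the same color class, so that if $k\in F_i$ then $J^k_{vw}=1$ forces $c(v)=c(w)=i$ and the support of $J^k$ is confined to the $(i,i)$ diagonal block. Parts~(1) and~(2) follow immediately from this support property: if $k\in F$, then $J^k$ is supported on one diagonal block, and such a matrix is fixed by both $\bdiag$ and $\tri$ (each operator retains the diagonal blocks), giving $\bdiag(J^k)=J^k=\tri(J^k)$; if $k\notin F$, then no extended edge of color $k$ stays within a color class, so $J^k$ has no entries in any diagonal block and $\bdiag(J^k)=0$.

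Part~(3) is then a short computation. Since colors are attached to unordered extended edges (Definition~\ref{def:base}), each $J^h$ is symmetric, so $(J^h)^\top=J^h$. For $k,h\in F$, part~(1) gives $\tri(J^k)=J^k$ and $\tri(J^h)^\top=(J^h)^\top=J^h$; substituting into Proposition~\ref{pro:m} yields $[J^kJ^h]_{vw}=[\tri(J^k)\tri(J^h)^\top]_{vw}=m_{v\to w}(k,h)$ for every $\{v,w\}\in\tilde E$. I would also record the companion fact that off the extended edges the product vanishes: a nonzero term $J^k_{vu}J^h_{uw}$ forces $c(v,u)=k\in F_i$ and $c(u,w)=h\in F_{i'}$, hence $c(v)=c(u)=i$ and $c(u)=c(w)=i'$, so $i=i'$ and $v,u,w$ lie in one color class; as $G[V_i]$ is a disjoint union of cliques (Corollary~\ref{corol:cliques}), $v$ and $w$ lie in a common clique, so $\{v,w\}\in\tilde E$.

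Part~(4) is the main point, and the place where condition (M2) is used. By parts~(1) and~(3), both $J^kJ^h$ and $J^hJ^k$ are supported on the $(i,i)$ block, so it suffices to compare entries $(v,w)$ with $v,w\in V_i$. For $\{v,w\}\in\tilde E$, part~(3) gives $[J^kJ^h]_{vw}=m_{v\to w}(k,h)$ and $[J^hJ^k]_{vw}=m_{v\to w}(h,k)$, so the claim reduces to $m_{v\to w}(k,h)=m_{v\to w}(h,k)$. Here I would chain two identities: the elementary symmetry $m_{w\to v}(k,h)=m_{v\to w}(h,k)$ (immediate from $c(v,u)=c(u,v)$ and $c(v)\wedge c(w)=c(w)\wedge c(v)$), and condition (M2), which applies because $c(v)=c(w)=i$ and $k,h\in F_i\subset F$ and yields $m_{v\to w}(k,h)=m_{w\to v}(k,h)$. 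Combining these, $m_{v\to w}(k,h)=m_{w\to v}(k,h)=m_{v\to w}(h,k)$. For the remaining entries with $v,w\in V_i$ but $\{v,w\}\notin\tilde E$ — that is, $v$ and $w$ in distinct cliques of $V_i$ — no intermediate $u$ can be adjacent to (or equal to) both, so both products vanish and equality is trivial.

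The only genuine obstacle is bookkeeping: ensuring that (M2) is invoked solely for pairs $(v,w)$ with $c(v)=c(w)$, and carefully disposing of the off-block and off-clique entries where the products vanish. Once Lemma~\ref{lem:edge_regularity_on_diagonal} and Corollary~\ref{corol:cliques} confine all nonzero entries to a single clique within one diagonal block, the symmetry identity together with (M2) closes the argument immediately.
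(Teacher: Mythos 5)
Your proof is correct and follows essentially the same route as the paper's: parts (1)--(2) from the definition of $F$ together with Lemma~\ref{lem:edge_regularity_on_diagonal}, part (3) by substituting part (1) into Proposition~\ref{pro:m}, and part (4) by combining part (3), condition (M2), and the symmetry identity $m_{v\to w}(h,k)=m_{w\to v}(k,h)$, with all remaining entries vanishing by Lemma~\ref{lem:edge_regularity_on_diagonal}. Your only addition is to treat explicitly, via Corollary~\ref{corol:cliques}, the case of same-colored but non-adjacent vertices, which the paper subsumes into its ``provided $c(v)=c(w)=i$'' case; this is careful bookkeeping, not a different argument.
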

\begin{proof}
Points (1) and (2) follow directly from the definition of $F$. For (3), by (1) and Proposition \ref{pro:m}, we have
\begin{align*}
   \left[J^kJ^{h}\right]_{vw} = \left[\tri(J^k)\tri(J^{h})^\top\right]_{vw} = m_{v\to w}(k,h). 
\end{align*}
For (4), for each $i\in[r]$, by (M2) and (3), we have for $k,h\in F_i$,
\[
[J^kJ^h]_{vw} = m_{v\to w}(k,h) = m_{w\to v}(k,h) = [J^hJ^k]_{vw}
\]
provided $c(v)=c(w)=i$. Otherwise, $[J^kJ^h]_{vw} = 0 = [J^hJ^k]_{vw}$ by Lemma \ref{lem:edge_regularity_on_diagonal}.
\end{proof}

\begin{lemma}\label{lem:Z2}
If \((G,\mathcal{C})\) is a symmetric CER graph, then $\coloredZ$ satisfies condition (Z2). 
\end{lemma}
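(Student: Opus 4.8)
The plan is to verify (Z2) on a basis and then reduce the product $\bdiag(J^k)\bdiag(J^h)$ to the matrix products already analyzed in Proposition~\ref{prop:T_diag_properties}. Since $\{J^k\colon k\in[r+R]\}$ is a basis of $\coloredZ$ and the map $(x,y)\mapsto\bdiag(x)\bdiag(y)$ is bilinear, it suffices to show $\bdiag(J^k)\bdiag(J^h)\in\coloredZ$ for all $k,h\in[r+R]$. If either $k\notin F$ or $h\notin F$, then by Proposition~\ref{prop:T_diag_properties}(2) the corresponding block-diagonal part vanishes and the product is the zero matrix, which lies in $\coloredZ$. Hence I may assume $k,h\in F$, in which case Proposition~\ref{prop:T_diag_properties}(1) gives $\bdiag(J^k)=J^k$ and $\bdiag(J^h)=J^h$, so the claim becomes $J^kJ^h\in\coloredZ$.

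First I would check symmetry: since each $J^k$ is symmetric, $(J^kJ^h)^\top=J^hJ^k$, and this equals $J^kJ^h$ by Proposition~\ref{prop:T_diag_properties}(4), which uses (M2) (for colors lying in different $F_i$'s both products vanish, so commutativity extends to all $k,h\in F$). It then remains to prove the zero pattern and the constancy of entries on color classes, exactly the conditions (a) and (b) used in the proof of Lemma~\ref{lem:Z1}, now applied to $J^kJ^h$. By Proposition~\ref{prop:T_diag_properties}(3) we have $[J^kJ^h]_{vw}=m_{v\to w}(k,h)$, so both conditions become statements about directed $2$-path counts.

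For the zero pattern (a), take $v\neq w$ with $\{v,w\}\notin E$ and show $m_{v\to w}(k,h)=0$. Any intermediate vertex $u$ would satisfy $c(v,u)=k\in F$ and $c(u,w)=h\in F$, which by Lemma~\ref{lem:edge_regularity_on_diagonal} forces $v,u,w$ to share a single vertex color $i$; since $G[V_i]$ is a disjoint union of cliques by Corollary~\ref{corol:cliques}, the adjacencies $v\sim u$ and $u\sim w$ (together with the degenerate cases $u=v$ or $u=w$ arising when $k$ or $h$ equals the loop color $i$) would force $v\sim w$, contradicting $\{v,w\}\notin E$. Hence no such $u$ exists and the entry is zero.

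The entry-equality step (b) is the main obstacle, and it is precisely where symmetric $2$-path regularity enters. Condition (M1) alone only tells us that the symmetrized counts $m_{v\leftrightarrow w}$ are constant on extended-edge color classes, whereas the entries of the un-symmetrized product $J^kJ^h$ are the directed counts $m_{v\to w}(k,h)$. The plan is to combine the identity $m_{w\to v}(k,h)=m_{v\to w}(h,k)$ with (M2) to deduce, for $c(v)=c(w)$ and $k,h\in F$, that $m_{v\to w}(k,h)=m_{v\to w}(h,k)$, whence $m_{v\to w}(k,h)=\tfrac12\,m_{v\leftrightarrow w}(k,h)$. Then for any $\{v,w\},\{v',w'\}$ with $c(v,w)=c(v',w')$ -- where, whenever the entry is nonzero, Lemma~\ref{lem:edge_regularity_on_diagonal} places all four vertices in a single class $V_i$ so that the counts are comparable -- condition (M1) gives $m_{v\leftrightarrow w}=m_{v'\leftrightarrow w'}$ and hence $m_{v\to w}(k,h)=m_{v'\to w'}(k,h)$. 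Together with (a) and the symmetry established above, this shows that $J^kJ^h$ is symmetric, vanishes off the support prescribed by $(G,\mathcal{C})$, and is constant on color classes; that is, $J^kJ^h\in\coloredZ$, which completes the verification of (Z2).
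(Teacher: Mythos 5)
Your proof is correct and follows essentially the same route as the paper's: reduce (Z2) by bilinearity and Proposition~\ref{prop:T_diag_properties}(1)--(2) to showing $J^kJ^h\in\coloredZ$ for $k,h\in F$, interpret the entries as directed $2$-path counts via part (3), use (M2) to identify $m_{v\to w}(k,h)$ with $\tfrac12 m_{v\leftrightarrow w}(k,h)$, and conclude by the (M1)-based zero-pattern and constancy argument of Lemma~\ref{lem:Z1}. The only cosmetic differences are that you make the symmetry of $J^kJ^h$ explicit via part (4) and justify the zero pattern through Lemma~\ref{lem:edge_regularity_on_diagonal} and Corollary~\ref{corol:cliques} rather than the cpeo chordality argument, details the paper leaves implicit in its reference to Lemma~\ref{lem:Z1}.
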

\begin{proof}
By linearity of $\bdiag$, and Proposition \ref{prop:T_diag_properties} (1) and (2),  condition (Z2) is equivalent to 
\begin{align}\label{eq:z2eq}
J^k J^h \in \coloredZ
\end{align}
for $k,h\in F$.
By Proposition \ref{prop:T_diag_properties} (3), we have $\left[J^kJ^{h}\right]_{vw} = m_{v\to w}(k,h)$.
Note that $m_{v\to w}(k,h)=0$ if $k \in F_i$ and $h\in F_{i'}$ with $i\neq i'$. Thus, it is left to prove $J^k J^h \in \coloredZ$ for $k,h\in F_i$ for every $i\in[r]$. Under (M2), we have $m_{v\to w}(k,h) = m_{v\leftarrow w}(k,h)=\frac12 m_{v\leftrightarrow w}(k,h)$ for $v,w\in V$ with $c(v)=c(w)=i$. Thus, \eqref{eq:z2eq} follows by the same argument as \eqref{eq:Z1} in the proof of Lemma \ref{lem:Z1}.
%
\end{proof}

\begin{lemma}\label{lem:L_i}
Assume that $(G,\mathcal{C})$ is a CER graph. If $c(v,w)=c(v',w')=k\in [r+R]\setminus F$, $c(v)\le c(w)$ and $c(v')\le c(w')$, then $c(v)=c(v')$.
\end{lemma}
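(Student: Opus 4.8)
The plan is to show that, for a color $k\in[r+R]\setminus F$, the lower vertex-color $c(v)$ of any edge $\{v,w\}$ of color $k$ (oriented so that $c(v)\le c(w)$) is completely determined by the symmetric $2$-path function $m_{v\leftrightarrow w}$. Once this is established, condition (M1) forces the conclusion immediately, since equal extended-edge colors yield equal symmetric $2$-path functions.

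First I would record two structural facts. Every loop color $i\in[r]$ lies in $F_i\subseteq F$, so $[r]\subseteq F$; hence $k\in[r+R]\setminus F$ is a genuine edge color, and $c(v,w)=k$ forces $c(v)\neq c(w)$ (otherwise $k\in F_{c(v)}$), so in fact $c(v)<c(w)$. Moreover, by the reflexive-extension construction each loop-color class consists only of loops, so for $\ell\in[r]$ the equality $c(x,u)=\ell$ forces $u=x$ and $c(x)=\ell$.

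Next I would compute $m_{v\leftrightarrow w}(\ell,k)$ for every loop color $\ell\in[r]$, writing $a=c(v)=c(v)\wedge c(w)$ so that the intermediate vertex ranges over $V_{\le a}$. In $m_{v\to w}(\ell,k)$ the first edge carries the loop color $\ell$, which forces $u=v$ and $\ell=a$; the single surviving path $v-v-w$ then gives $m_{v\to w}(\ell,k)=1$ if $\ell=a$ and $0$ otherwise. In $m_{v\to w}(k,\ell)$ the second edge carries the loop color $\ell$, forcing $u=w$; but $c(w)>a$ means $w\notin V_{\le a}$, so this count vanishes. Hence $m_{v\leftrightarrow w}(\ell,k)$ equals $1$ precisely when $\ell=a$, so $a=c(v)$ is the unique loop color $\ell\in[r]$ for which $m_{v\leftrightarrow w}(\ell,k)\neq0$. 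The identical computation applied to $\{v',w'\}$ shows that $c(v')$ is the unique loop color $\ell$ with $m_{v'\leftrightarrow w'}(\ell,k)\neq0$.

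Finally, since $c(v,w)=c(v',w')$, the $2$-path regularity condition (M1) gives $m_{v\leftrightarrow w}=m_{v'\leftrightarrow w'}$ as functions on $[r+R]\times[r+R]$; the unique index $\ell$ at which the common function is nonzero along the slice $(\,\cdot\,,k)$ is therefore shared, yielding $c(v)=c(v')$. The only delicate point is the bookkeeping of the intermediate-vertex range $V_{\le c(v)\wedge c(w)}$: it is precisely the orientation $c(v)\le c(w)$ that makes the count asymmetric, admitting the loop at the lower endpoint $v$ while excluding the loop at $w$, so that the slice $m_{v\leftrightarrow w}(\,\cdot\,,k)$ detects exactly the smaller color $c(v)$.
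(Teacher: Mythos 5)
Your proof is correct and takes essentially the same route as the paper: both exploit (M1) by evaluating the symmetric $2$-path function at pairs of the form (loop color, $k$), where the loop color forces the intermediate vertex to coincide with an endpoint, so the nonzero entry of that slice pins down $c(v)$ and transfers to $(v',w')$. The only cosmetic difference is that you first use $k\notin F$ to rule out $c(v)=c(w)$ and then phrase the conclusion as uniqueness of the nonvanishing loop color, whereas the paper keeps the (vacuous) case $c(v)=c(w)$, where the count equals $2$, as a second case.
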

\begin{proof}
Let $(G,\mathcal{C})$ be a CER graph, $c(v,w)=c(v',w')=k\in [r+R]\setminus F$, and $c(v)\le c(w)$, $c(v')\le c(w')$. Then, by (M1):
\[
m_{v\leftrightarrow w} = m_{v'\leftrightarrow w'}.
\]
In particular, $m_{v\leftrightarrow w}(c(v),k)=m_{v'\leftrightarrow w'}(c(v),k)$.

We have two cases:
\begin{enumerate}
    \item $c(v)<c(w)$. Then $m_{v\leftrightarrow w}(c(v),k) = 1$, which implies $c(v)=c(v')$.
    \item $c(v)=c(w)$. Then $m_{v\leftrightarrow w}(c(v),k) = 2$, which also implies $c(v)=c(v')$.
\end{enumerate}
\end{proof}

\begin{lemma}\label{lem:Z0}
Assume that $(G,\mathcal{C})$ is a CER graph. Then, $\coloredZ$ satisfies (Z0). 
\end{lemma}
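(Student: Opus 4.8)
The plan is to verify the two requirements of (Z0) separately: first that $I_p \in \coloredZ$, and then that the color-indicator basis of $\coloredZ$ splits cleanly across the summands $M_i(\coloredZ)$ and $L_i(\coloredZ)$.

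For the first requirement, I would observe that the identity matrix is nothing but the sum of the loop-color indicators: if $i\in[r]$ denotes the loop-color of the vertex class $V_{\eta_i}$, then $I_p=\sum_{i\in[r]}J^{i}$, and each $J^{i}\in\coloredZ$ by Definition~\ref{def:base}. Equivalently, $I_p$ has all diagonal entries equal to $1$ (respecting the within-color equality constraints on loops) and all off-diagonal entries equal to $0$ (respecting both the zero pattern and the edge-color equalities), so $I_p\in\coloredZ$.

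For the direct-sum decomposition, the key idea is that the basis $\{J^k:k\in[r+R]\}$ of $\coloredZ$ (Definition~\ref{def:base}) is compatible with the block structure: each $J^k$ is supported on a single block-type. I would split on whether $k\in F$. If $k\in F_i$ for some $i\in[r]$, then Lemma~\ref{lem:edge_regularity_on_diagonal} guarantees that every extended edge of color $k$ has both endpoints in $V_{\eta_i}$; hence $J^k$ is supported only on the $(i,i)$-block and thus $J^k\in M_i(\coloredZ)$. If instead $k\in[r+R]\setminus F$, I would orient each edge of color $k$ so that the smaller color-class index comes first; Lemma~\ref{lem:L_i} then forces all edges of color $k$ to join one fixed ordered pair of classes $(i,j)$ with $i<j$, so $J^k$ is supported only on the $(i,j)$- and $(j,i)$-blocks, giving $J^k\in L_i(\coloredZ)$ with $i\le r-1$.

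Finally I would assemble these observations. The block supports of the subspaces $M_1(\coloredZ),\ldots,M_r(\coloredZ)$ and $L_1(\coloredZ),\ldots,L_{r-1}(\coloredZ)$ are pairwise disjoint (the $M_i$ live on distinct diagonal blocks and each $L_i$ on distinct off-diagonal block-pairs), so their sum inside $\mathrm{Sym}(p)$ is automatically direct; moreover $L_r(\coloredZ)=\{0\}$ since there are no blocks to the right of the last, which is why the index set is $[r-1]$. Since every basis vector $J^k$ lands in exactly one of these summands, we obtain $\coloredZ\subseteq\bigl(\bigoplus_{i\in[r]}M_i(\coloredZ)\bigr)\oplus\bigl(\bigoplus_{i\in[r-1]}L_i(\coloredZ)\bigr)$, and the reverse inclusion is immediate since each summand sits inside $\coloredZ$ by construction. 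This establishes (Z0). The argument is essentially routine once Lemmas~\ref{lem:edge_regularity_on_diagonal} and~\ref{lem:L_i} are available; the only point demanding care is the case $k\notin F$, where one must invoke Lemma~\ref{lem:L_i} to rule out a single color connecting two different ordered pairs of color classes, as this is precisely what confines $J^k$ to one $L_i$ and makes the sum direct.
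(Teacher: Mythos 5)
Your proof follows essentially the same route as the paper's: the identity matrix is trivially in $\coloredZ$, and the decomposition required by (Z0) is reduced to the two structural facts supplied by Lemma~\ref{lem:edge_regularity_on_diagonal} (a color in $F$ confines its edges to one diagonal block) and Lemma~\ref{lem:L_i} (a color outside $F$ has a constant smaller endpoint class); the paper states exactly this reduction, and your basis-by-basis verification with the matrices $J^k$ merely makes it explicit.

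One caveat: you overstate what Lemma~\ref{lem:L_i} gives. It fixes only the \emph{smaller} of the two class indices, not the full ordered pair $(i,j)$: in a CER graph a single off-diagonal color may connect class $i$ to several different higher classes. The paper's own three-vertex example in Section~\ref{sec:cer} shows this: under the valid ordering, the two orange edges carry the same color yet join Blue--Red and Blue--Green, two distinct pairs of classes. So your intermediate claim that ``all edges of color $k$ join one fixed ordered pair $(i,j)$'' is false, and so is the assertion that Lemma~\ref{lem:L_i} rules out a color connecting two different ordered pairs. Fortunately the conclusion survives, because membership in $L_i(\coloredZ)$ requires only that every nonzero block of $J^k$ be of the form $(i,j)$ or $(j,i)$ with $j>i$ for one and the \emph{same} $i$ --- which is precisely what Lemma~\ref{lem:L_i} provides --- and the directness of the sum already follows from the pairwise disjoint block supports of the subspaces $M_i(\coloredZ)$ and $L_i(\coloredZ)$ themselves, not from any uniqueness of the larger index $j$.
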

\begin{proof}
It is clear that $I_p\in\coloredZ$. It remains to show that
\begin{enumerate}
    \item Suppose $c(v,w)=c(v',w')\in[r+R]$ and $c(v)=c(w)=i\in[r]$. Then, $c(v')=c(w')=i$.
    \item Suppose $c(v,w)=c(v',w')\in [r+R]\setminus F$, $c(v)\le c(w)$ and $c(v')\le c(w')$. Then $c(v)=c(v')$.
\end{enumerate}
These follow from Lemma \ref{lem:edge_regularity_on_diagonal} and Lemma \ref{lem:L_i}.
\end{proof}

\subsection{Proof of Theorem \ref{thm:new_Cholesky}}
Let $\tri(\mathcal{Z})^+$ be the subset of $\tri(\mathcal{Z})$ consisting of elements  whose $(i,i)$-block components are positive definite symmetric matrices for all $i\in[r]$.

\begin{lemma} \label{lem:gen_Cholesky0}
Assume (Z0) and (Z1).  
Let $x \in \mathcal{Z} \cap \mathrm{Sym}^+(p)$.\\
{\rm (i)} 
There exist $T_i \in \tri(L_i(\mathcal{Z})) = L_i(\mathcal{Z}) e^{(i)}$ for $i \in [r]$ 
 and $C \in \bdiag(\mathcal{Z}) \cap \mathrm{Sym}^+(p)$
 such that $x = (I_p+ \sum_{i\in[r-1]} T_i) C \transp{ (I_p + \sum_{i\in[r-1]} T_i)}$.\\
{\rm (ii)} 
There exists a unique $T \in \tri(\mathcal{Z})^+$ for which $x = T T^\top$.\\
{\rm (iii)} 
For $i \in [r]$ and $C_i \in M_i(\mathcal{Z})^+$,
 there exists $S_i \in \sum_{\alpha\in[d_i]} (\R c^{(i)}_\alpha \oplus (c^{(i)}_{>\alpha} M_i(\mathcal{Z}) c^{(i)}_\alpha )$ such that $S_i \transp{S_i} = C_i$.\\
{\rm (iv)}
There exists a unique $T \in \hh^+$ for which $x = T \transp{T}$.
\end{lemma}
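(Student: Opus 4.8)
The plan is to establish the four assertions in the order (i)\,$\Rightarrow$\,(ii) and (i)\,$+$\,(iii)\,$\Rightarrow$\,(iv), treating (iii) separately as a statement internal to each diagonal Jordan algebra $M_i(\Zsp)$. The single algebraic fact that drives everything is a \emph{module property}: for every $i\in[r]$, $\ell\in L_i(\Zsp)$ and $m\in M_i(\Zsp)$, the matrix $\ell m$ (supported on the off-diagonal blocks of column/row $i$) again lies in $L_i(\Zsp)$. I would obtain this by polarizing (Z1): since $\tri(x)\tri(x)^\top\in\Zsp$ for all $x$, the symmetric product $\tri(\ell)\tri(m)^\top+\tri(m)\tri(\ell)^\top$ belongs to $\Zsp$, and a direct block computation identifies it with the symmetrization of $\ell m$; its support then forces it into $L_i(\Zsp)$ via the direct-sum decomposition (Z0). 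Together with the fact that each $M_i(\Zsp)$ is a Euclidean Jordan algebra (hence closed under inverse and square root), this is all the closure one needs.

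For (i) I would argue by induction on the rank $r$, peeling off the first block. Write $x$ in block form with leading block $X_{11}\in M_1(\Zsp)^+$, and let $\ell$ be the $L_1$-component of $x$. Using the module property with $m=X_{11}^{-1/2}\in M_1(\Zsp)$, the element $a\in\Zsp$ whose $M_1$-part is $X_{11}^{1/2}$ and whose $L_1$-part is $\ell\,X_{11}^{-1/2}$ (all other components zero, which is legitimate by (Z0)) lies in $\Zsp$. Then $b:=\tri(a)\tri(a)^\top\in\Zsp$ by (Z1), and a short computation gives $b$ the blocks $b_{11}=X_{11}$, $b_{j1}=X_{j1}$, and $b_{kl}=X_{k1}X_{11}^{-1}X_{1l}$ for $k,l\ge 2$, so that $x-b=0\oplus S'$, where $S'$ is exactly the Schur complement of $X_{11}$. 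Hence $S'$ lies in $\Zsp':=\bigoplus_{i\ge 2}M_i(\Zsp)\oplus\bigoplus_{2\le i\le r-1}L_i(\Zsp)\subseteq\Zsp$, which one checks is itself a \BC-space of rank $r-1$: condition (Z1) for $\Zsp'$ is inherited because elements supported on blocks $\ge 2$ have zero first block-row/column, a property preserved by $x'\mapsto\tri(x')\tri(x')^\top$. The induction hypothesis applied to $S'$ supplies the off-diagonal factors for $i\ge 2$ and the block-diagonal $C$, and setting $T_1=\ell\,X_{11}^{-1}\in L_1(\Zsp)e^{(1)}$ completes $x=(I_p+\sum_i T_i)\,C\,(I_p+\sum_i T_i)^\top$.

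Statements (ii)--(iv) then follow by assembling block-diagonal refinements of $C=\bigoplus_i C_i$. For (ii) I take the symmetric positive-definite roots $C_i^{1/2}\in M_i(\Zsp)$ and set $T=(I_p+\sum_i T_i)\,\bdiag(C^{1/2})$; its diagonal blocks $C_i^{1/2}$ are symmetric positive definite and the off-diagonal blocks $(T_i)C_i^{1/2}$ lie in $L_i(\Zsp)$ by the module property, so $T\in\tri(\Zsp)^+$, with uniqueness following from the usual block-recursive argument (the $(1,1)$-block forces $T_{11}=X_{11}^{1/2}$, then one propagates). Assertion (iii) is the classical triangular factorization inside the Euclidean Jordan algebra $M_i(\Zsp)$ relative to the Jordan frame $\{c^{(i)}_\alpha\}$: the Peirce decomposition produces the solvable ``lower-triangular'' subalgebra $\sum_\alpha(\R c^{(i)}_\alpha\oplus c^{(i)}_{>\alpha}M_i(\Zsp)c^{(i)}_\alpha)$ whose associated group acts simply transitively on $M_i(\Zsp)^+$, yielding $S_i$ with positive diagonal coefficients and $S_iS_i^\top=C_i$. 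Finally, for (iv) I combine (i) and (iii): writing $C_i=S_iS_i^\top$ and $T=(I_p+\sum_i T_i)\,\bdiag(S)$, the diagonal blocks $S_i$ supply the terms $t_{i,\alpha}\tilde e^{(i)}_\alpha$ (with $t_{i,\alpha}>0$) and the $c^{(i)}_{>\alpha}M_i(\Zsp)c^{(i)}_\alpha$ parts of $\hh_{i,\alpha}$, while the off-diagonal blocks $T_iS_i=\sum_\alpha (T_iS_i)c^{(i)}_\alpha\in\bigoplus_\alpha L_i(\Zsp)c^{(i)}_\alpha$ supply the remaining parts of $\hh_{i,\alpha}$; hence $T\in\hh^+$ and $x=TT^\top$. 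Uniqueness of $T\in\hh^+$ follows from the uniqueness in (i) together with that of the Jordan-frame Cholesky in (iii) (equivalently, in a suitable orthonormal basis $\hh^+$ consists of genuine lower-triangular matrices with positive diagonal, for which Cholesky factorization is unique).

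The main obstacle I anticipate is keeping every intermediate factor inside the prescribed subspaces, which is exactly where (Z0) and (Z1) must be invoked; the module property $L_i(\Zsp)M_i(\Zsp)\subseteq L_i(\Zsp)$ is the crux making $\ell X_{11}^{-1}$, $T_iC_i^{1/2}$ and $T_iS_i$ land back in $L_i(\Zsp)$. The second delicate ingredient is the Euclidean-Jordan-algebra factorization in (iii): though classical, one must check that the ordering of the Jordan frame matches the ``$c^{(i)}_{>\alpha}$'' convention in the definition of $\hh_{i,\alpha}$, so that the diagonal coefficients are genuinely positive and the resulting $S_i$ is precisely the factor needed to realize $T\in\hh^+$ in (iv).
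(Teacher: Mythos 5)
Your proposal is correct, and at the skeleton level it is the same argument as the paper's: (i) by induction on the rank, peeling off the first block via a Schur complement; (ii) by inserting Jordan square roots $C_i^{1/2}$; (iii) by a triangular factorization relative to the Jordan frame; (iv) by combining (i) and (iii). The differences are in how the subspace-membership issues are handled, and they are worth recording. First, you isolate and prove the module property ($\ell\in L_i(\Zsp)$, $m\in M_i(\Zsp)$ implies the symmetrization of $\tri(\ell)m$ lies in $L_i(\Zsp)$) by polarizing (Z1) and sorting supports via (Z0); the paper uses exactly this kind of closure implicitly (e.g.\ in asserting $T_1=y_1x_1^{<-1>}\in\tri(L_1(\Zsp))$, $x'-T_1x_1\transp{T_1}\in\Zsp'$, and $T_iC_i^{<1/2>}\in\tri(\Zsp)$ without justification), so your explicit lemma actually fills in steps the paper leaves to the reader. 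Second, in (i) you construct $b=\tri(a)\tri(a)^\top$ with $a$ built from $X_{11}^{1/2}$ and $\ell X_{11}^{-1/2}$, so that $b\in\Zsp$ holds by (Z1) itself and $x-b\in\Zsp'$ is automatic; the paper instead computes $(I_p-T_1)x\transp{(I_p-T_1)}$ and must argue membership of $T_1x_1\transp{T_1}$ separately — your route is slightly cleaner on this point. Third, for (iii) you invoke the classical Faraut--Kor\'anyi fact that the triangular group attached to a Jordan frame acts simply transitively on the symmetric cone, whereas the paper runs a rank induction analogous to (i); both are sketches, and if you pursue yours you should verify that the abstract triangular-group action translates, in this special (matrix) realization, into the congruence factorization $C_i=S_i\transp{S_i}$ with $S_i$ in the prescribed linear set — the Peirce-component bookkeeping needed there is essentially the same work the paper's induction would do. Finally, you address uniqueness in (ii) and (iv) explicitly (block recursion, and genuine lower-triangularity of $\hh^+$ in an adapted basis), which the paper asserts but does not spell out.
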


\begin{proof}[Proof of Lemma \ref{lem:gen_Cholesky0}]

(i) We prove the statement by induction on the integer $r$,
 which we call the rank of the \BC-space $\mathcal{Z}$.
If $r=1$, then the statement holds trivially with $C = x$.
Let us consider the case $r>1$,
 assuming that the statement holds for
 any \BC-space whose rank is smaller than $r$.
Let $\mathcal{Z}' \subset \mathcal{Z}$ be
 the linear subspace
 $\Bigl( \bigoplus_{i=2}^r M_i(\mathcal{Z}) \Bigr) \oplus 
  \Bigl( \bigoplus_{i=2}^{r-1} L_i(\mathcal{Z}) \Bigr). $
Then $(1, i)$ block components of any $x' \in \mathcal{Z}'$ are zero for $i \in [r]$,
 and $x' \in \mathcal{Z}'$ is naturally identified with its $p' \times p'$ right-bottom submatrix,
 where $p' := p - n_1$.
In this way, $\mathcal{Z}'$ can be regarded as a linear subspace of $\mathrm{Sym}(p')$, a \BC-space of rank $r-1$. 
Define 
$ e' := I_p - e^{(1)} = \sum_{i=2}^{r} e^{(i)}\in  \mathcal{Z}'$.
Then we have $\mathcal{Z}' = e' \mathcal{Z} e'$.

For $x \in \mathcal{Z} \cap \mathrm{Sym}^+(p)$,
 we have  
$$ x = (e^{(1)} + e')x(e^{(1)} + e')
 = e^{(1)} x e^{(1)} + e' x e^{(1)} + e^{(1)} x e' + e' x e'
 = x_1 + y_1 + x', $$
 where
 \begin{align*}
 x_1 := e^{(1)} x e^{(1)}  \in M_1(\mathcal{Z}), \quad
 y_1 := e' x e^{(1)} + e^{(1)} x e' \in L_1(\mathcal{Z}), \quad
 x' := e' x e' \in \mathcal{Z}'.
\end{align*}
Assume that
 $x = (I_p+ \sum_{i\in[r-1]} T_i) C \transp{ (I_p + \sum_{i\in[r-1]} T_i)}$
 with $C = \sum_{i\in[r]} C_i,\,\, C_i \in M_i(\mathcal{Z})$.
Note that $C_1 = e^{(1)} C = C e^{(1)}$ and that $C' := \sum_{i=2}^r C_i$ is equal to $e' C =  C e'$.
Then we have $T_1 C' = T_1 e^{(1)} e' C = 0$ and $T_i C_1 = T_i e' e^{(1)} C_1 = 0$.
Thus
\begin{align*}
 x &= (I_p+ \sum_{i\in[r-1]} T_i) C_1 \transp{ (I_p + \sum_{i\in[r-1]} T_i)}
        +(I_p+ \sum_{i\in[r-1]} T_i) C' \transp{ (I_p + \sum_{i\in[r-1]} T_i)}\\
&= (I_p+T_1) C_1 \transp{ (I_p +T_1)}
        +(I_p+ \sum_{i\in[r-1]} T_i) e' C' e' \transp{ (I_p + \sum_{i\in[r-1]} T_i)}\\
&= C_1+T_1 C_1 + C_1 \transp{T_1} + T_1 C_1 \transp{T_1}
        +(e' + \sum_{i=2}^{r-1} T_i) C' \transp{ (e' + \sum_{i=2}^{r-1} T_i)}.
\end{align*}
Substituting the above to the equality $x_1 =e^{(1)} x e^{(1)}$ and noting that $e^{(1)} T_i = 0$,
 we have   
 $x_1  = e^{(1)} C_1 e^{(1)} = C_1$.
Furthermore, we have
\begin{align*}
  y_1 e^{(1)} = e' x e^{(1)} = T_1 C_1. 
\end{align*}     
Finally, we obtain
 $$
 x' = e' x e' = T_1 C_1 \transp{T_1} + (e' + \sum_{i=2}^{r-1} T_i) C' \transp{ (e' + \sum_{i=2}^{r-1} T_i)}.
$$
From these observations, 
 we deduce that $C_1 = x_1$ and $T_1 = y_1 x_1^{<-1>}$,
 where
 $x_1^{<-1>} \in M_1(\mathcal{Z})$ denotes the inverse of $x_1$ as an element of the Jordan algebra $M_1(\mathcal{Z})$.
Indeed, $x_1$ can be identified with the $n_1 \times n_1$ left-top submatrix of $x$,
 which is positive definite.
We see that
\begin{align*}
{} & (I_p - T_1) x \transp{ (I_p - T_1)}\\
&= (I_p - T_1) x_1 \transp{ (I_p - T_1)}
+(I_p - T_1) e'x e^{(1)} \transp{ (I_p - T_1)}
+(I_p - T_1) e^{(1) }x e'\transp{ (I_p - T_1)}\\
& \quad +(I_p - T_1) e' x  e' \transp{ (I_p - T_1)}\\
&= (I_p - T_1) x_1 \transp{ (I_p - T_1)}
+e' x e^{(1)} \transp{ (I_p - T_1)}
+ (I_p - T_1) e^{(1) }x e'
+ e' x  e' \\
&= (x_1 - T_1 x_1 - x_1 \transp{T_1} + T_1 x_1 \transp{T_1})
+ T_1 x_1 \transp{ (I_p - T_1)}
+ (I_p - T_1) x_1 \transp{T_1}
+ x'\\
&= x_1 + x' - T_1 x_1 \transp{T_1}.
\end{align*}
Since $e'(T_1 x_1 \transp{T_1})e' = T_1 x_1 \transp{T_1}$, 
 we have $x' - T_1 x_1 T_1 \in \mathcal{Z}'$,
 and the positive definite matrix $(I_p- T_1) x \transp{(I_p - T_1)}$
 belongs to $M_1 (\mathcal{Z}) \oplus \mathcal{Z}'$.
Thus, when $x' - T_1 x_1 \transp{T_1} \in \mathcal{Z}'$
 is regarded as a $p' \times p'$ symmetric matrix,
 it is positive definite and we can apply the induction hypothesis.
It follows that there exist unique $T_i \in \tri(L_i(\mathcal{Z}))$ for $i=2, \dots,r-1$
 and positive $C' \in \sum_{i=2}^r M_i(\mathcal{Z})$ such that
 $x' - T_1 x_1 T_1 = (e' + \sum_{i=2}^r T_i) C' \transp{(e' + \sum_{i=2}^r T_i)}$.
Therefore 
 $x' = T_1 x_1 T_1 + (e' + \sum_{i=2}^r T_i) C' \transp{(e' + \sum_{i=2}^r T_i)}$
 and we have
 $x =  (I_p+ \sum_{i\in[r-1]} T_i) C \transp{ (I_p + \sum_{i\in[r-1]} T_i)}$  
 with $C = x_1 + C' \in \mathrm{Sym}^+(p)$ as expected.

(ii) We denote by $C_i^{<\alpha>}$ the $\alpha$-power of the positive element $C_i \in M_i(\mathcal{Z})$.
Then $C = \sum_{i\in[r]} C_i = (\sum_{i\in[r]} C_i^{<1/2>})^2$ with $\sum_{i\in[r]} C_i^{<1/2>} \in \mathrm{Sym}^+(p)$.
It follows that
\begin{align*}
x & = (I_p+ \sum_{i\in[r-1]} T_i) C \transp{ (I_p + \sum_{i\in[r-1]} T_i)}
  = (I_p+ \sum_{i\in[r-1]} T_i) (\sum_{i\in[r]} C_i^{<1/2>})^2 \transp{ (I_p + \sum_{i\in[r-1]} T_i)}\\
  &  = T \transp{T},
\end{align*}
where 
$T :=  (I_p+ \sum_{i\in[r-1]} T_i) (\sum_{i\in[r]} C_i^{<1/2>}) 
= \sum_{i\in[r]} C_i^{<1/2>} + \sum_{i\in[r-1]} T_i C_i^{<1/2>} \in \tri(\mathcal{Z})^+$.
Note that $T_i C_i^{<1/2>} \in \tri(\mathcal{Z})$.

(iii) It is enough to show the following statement for a general Jordan subalgebra $\mathcal{M}$ of $\mathrm{Sym}(q)$: \textit{
Let $\{c_1, \dots c_d\}$ be a Jordan frame of $\mathcal{M}$.
For any positive element $C \in \mathcal{M}$, 
 there exists $S \in \sum_{\alpha=1}^{d} \R_+ c_{\alpha} \oplus (c_{>\alpha}\mathcal{M} c_\alpha )$ for which $C = S \transp{S}$.}

The statement above can be proved by the induction on the rank $d$ of the Jordan algebra $\mathcal{M}$
 in a way quite similar to the proof of (i).

(iv)
By (iii), we can take unique $S_i$ for which $C_i = S_i \transp{S_i}\,\,\,(i \in [r])$.
Then 
\begin{align*}
x &= (I_p+ \sum_{i\in[r-1]} T_i) C \transp{ (I_p + \sum_{i\in[r-1]} T_i)}
  = (I_p+ \sum_{i\in[r-1]} T_i) (\sum_{i\in[r]} S_i)\transp{(\sum_{i\in[r]} S_i)} \transp{ (I_p + \sum_{i\in[r-1]} T_i)}
  \\
  &= T \transp{T},
\end{align*}
where 
$T :=  (I_p+ \sum_{i\in[r-1]} T_i) (\sum_{i\in[r]} S_i) 
= \sum_{i\in[r]} S_i + \sum_{i\in[r-1]} T_i S_i \in \tri(\mathcal{Z})^+$.
Note that $T_i S_i \in \tri(\mathcal{Z})$.
\end{proof}

\begin{proof}[Proof of Theorem \ref{thm:new_Cholesky} ]

The existence and uniqueness of $T\in\hh^+$ was already proved in Lemma \ref{lem:gen_Cholesky0} (iv).  We express an element $T$ of $\tri(\mathcal{Z})^+$ as $T = \sum_{i\in[r]} C_i + \sum_{i\in[r-1]} T_i$, where 
\begin{align*}
C_i &= \sum_{\alpha\in[d_i]} t_{i,\alpha}  \tilde{e}^{(i)}_\alpha
 \in M_i(\mathcal{Z}),\\
T_i &= \sum_{\alpha\in[d_i]} \sum_{\gamma\in J_{i,\alpha}} \tau_{i,\gamma} \tilde{f}^{(i)}_\gamma = \sum_{\gamma \in [d'_i]} \tau_{i,\gamma} \tilde{f}^{(i)}_\gamma
 \in \tri(L_i(\mathcal{Z})) \quad \mbox{ if }d'_i >0.
\end{align*}
(i) We have 
\[
\det x = \det TT^\top = (\det T)^2 =  \prod_{i\in[r]} (\det C_i)^2, 
\]
where we have used the fact that $T$ has a block upper-triangular structure where the diagonal blocks are the matrices $C_i$.
Recall that $\tilde{e}_\alpha^{(i)} =c_\alpha^{(i)}/\sqrt{\mu_{i,\alpha}}$, where $\{c_\alpha^{(i)}\}_{\alpha\in[d_i]}$ is a Jordan frame of $M_i(\mathcal{Z})$. Thus, 
\[
C_i = \sum_{\alpha\in[d_i]} \frac{t_{i,\alpha}}{\sqrt{\mu_{i,\alpha}}} c^{(i)}_\alpha
\]
implying that $t_{i,\alpha}/\sqrt{\mu_{i,\alpha}}$ is an eigenvalue of $C_i$ with multiplicity $\mu_{i,\alpha}=\mathrm{rank}(c_\alpha^{(i)})$.  Thus, 
\[
\det C_i = \prod_{\alpha\in[d_i]} \left(\frac{t_{i,\alpha}}{\sqrt{\mu_{i,\alpha}}}\right)^{\mu_{i,\alpha}}.
\]
(ii) Fix $A\in\mathrm{Sym}(p)$. 
Note that $(C_i + T_i) (C_j + T_j)^\top = 0$ if $i \ne j$,
 so that we obtain
$$
T T^\top = C_r^2 + \sum_{i\in[r-1]} (C_i + T_i) (C_i + T_i)^\top
 = \sum_{i\in[r]} C_i^2 + \sum_{i\in[r-1]} (T_i C_i + C_i T_i^\top + T_i T_i^\top) 
$$ 
so that
\[
 \tr(xA) =
 \sum_{i\in[r]} \tr(A C_i^2) 
 + \sum_{i\in[r-1]} \tr\left( A(T_i C_i + C_i T_i^\top) \right)+\sum_{i\in[r-1]} \tr\left( AT_i T_i^\top \right).
\]
For $\alpha\neq\beta$, we have 
\begin{align*}
\tilde{e}^{(i)}_\alpha \tilde{e}^{(i)}_\beta=0,\quad
\tilde{e}_\alpha^{(i)} (\tilde{f}^{(i)}_\beta)^\top = 0, \quad
\tilde{f}_\alpha^{(i)} (\tilde{f}^{(i)}_\beta)^\top = 0.
\end{align*}
Thus, 
\begin{align}\label{eq:Ci2}
\begin{split}
C_i^2 &= \sum_{\alpha\in[d_i]} t_{i,\alpha}^2
  \tilde{e}^{(i)}_\alpha \tilde{e}^{(i)}_\alpha, \\
T_i C_i + C_i T_i^\top 
 &= \sum_{\alpha\in[d_i]} \sum_{\gamma\in[d'_i]}
 t_{i,\alpha} \tau_{i,\gamma} 
(\tilde{e}^{(i)}_\alpha (\tilde{f}^{(i)}_\gamma)^\top
 + \tilde{f}^{(i)}_\gamma (\tilde{e}^{(i)}_\alpha)^\top )\\
 &= 2 \sum_{\alpha\in[d_i]} \sum_{\gamma \in J_{i,\alpha}}
 t_{i,\alpha} \tau_{i,\gamma} 
\tilde{e}^{(i)}_\alpha (\tilde{f}^{(i)}_\gamma)^\top,\\
 T_i T_i^\top
&= \sum_{\gamma\in[d'_i]} \sum_{\delta\in[d'_i]}
 \tau_{i,\gamma} \tau_{i,\delta} \tilde{f}^{(i)}_\gamma (\tilde{f}^{(i)}_\delta)^\top
 = \sum_{\alpha\in[d_i]} \sum_{\gamma,\delta \in J_{i,\alpha}} 
 \tau_{i,\gamma} \tau_{i,\delta} \tilde{f}^{(i)}_\gamma (\tilde{f}^{(i)}_\delta)^\top.
\end{split}\end{align}
Recalling the definitions 
\begin{align*}
\lambda_{i,\alpha}(A)
 = \tr(A \tilde{e}^{(i)}_{\alpha} \tilde{e}^{(i)}_{\alpha}),\quad (v_{i,\alpha})_{1\gamma}
:= \tr (A \tilde{e}^{(i)}_{\alpha}  \transp{(\tilde{f}^{(i)}_\gamma)} ), \quad \psi_{i,\alpha}(A)_{\gamma \delta}
:= \tr\, (A\tilde{f}^{(i)}_{\gamma} \transp{ (\tilde{f}^{(i)}_{\delta}) }),
\end{align*}
we see that $\tr(Ax)$  equals
\[
\sum_{i\in[r]} \sum_{\alpha\in[d_i]} t_{i,\alpha}^2 \lambda_{i,\alpha}(A)
+2 \sum_{i\in[r-1]} \sum_{\alpha\in[d_i]} \sum_{\gamma \in J_{i,\alpha}}
    t_{i,\alpha} \tau_{i,\gamma}  v_{i,\alpha}(A)_{1 \gamma}
+ \sum_{i\in[r-1]} \sum_{\alpha\in[d_i]} \sum_{\gamma, \delta \in J_{i,\alpha}}
 \tau_{i,\gamma} \tau_{i, \delta} \psi_{i,\alpha}(A)_{\gamma \delta}
\]
 and this formula is rewritten as
\begin{equation} \label{eqn:trace_xA}
 \tr(xA) =
\sum_{i\in[r]} \sum_{\alpha\in[d_i]} 
\begin{bmatrix} t_{i,\alpha} & (\tau^{(i)}_\alpha)^\top \end{bmatrix}
\begin{bmatrix} \lambda_{i,\alpha}(A) &  v_{i,\alpha}(A) \\
 v_{i,\alpha}(A)^\top & \psi_{i,\alpha}(A) \end{bmatrix}
\begin{bmatrix} t_{i,\alpha} \\ \tau^{(i)}_\alpha \end{bmatrix},
\end{equation}
where $\tau_{\alpha}^{(i)}=(\tau_{i,\gamma})_{\gamma\in J_{i,\alpha}}\in \R^{m_{i,\alpha}}$.

(iii) If $d'_i >0$, put
 $\tilde{g}^{(i)}_\gamma
 = \frac{1}{\sqrt{2}} (\tilde{f}^{(i)}_\gamma + (\tilde{f}^{(i)}_\gamma)^\top )
\in L_i(\mathcal{Z})$ for $\gamma\in[d'_i]$.
Then $\{ \tilde{g}^{(i)}_\gamma \}_{\gamma\in[d'_i]}$ 
 is an orthonormal basis of
 $L_i(\mathcal{Z})$.
An element $x \in \mathcal{Z}$ is expressed as
\[ x = \sum_{i\in[r]} \sum_{\alpha\in[d_i]} 
x_{i,\alpha} \tilde{e}^{(i)}_\alpha
 + \sum_{i\in[r-1]} \sum_{\gamma\in[d'_i]}
 y_{i,\gamma} \tilde{g}^{(i)}_\gamma 
 \]
 with $x_{i,\alpha}, y_{i,\gamma} \in \R$.
Since
 $\tilde{e}^{(i)}_\alpha =  c^{(i)}_\alpha/\sqrt{\mu_{i,\alpha}}$ and $c^{(i)}_\alpha$ is an idempotent, 
 we have 
\[
\tilde e^{(i)}_\alpha \tilde e^{(i)}_\alpha 
= \frac{1}{\sqrt{\mu_{i,\alpha}}} \tilde{e}^{(i)}_\alpha
\]
and if $\gamma \in J_{i,\alpha}$,
\begin{align*}
 \tilde{e}^{(i)}_\alpha (\tilde{f}^{(i)}_\gamma)^\top
+ \tilde{f}^{(i)}_\gamma (\tilde{e}^{(i)}_\alpha)^\top 
 = \frac{1}{\sqrt{\mu_{i,\alpha}}}
( (\tilde{f}^{(i)}_\gamma c^{(i)}_\alpha)^\top + \tilde{f}^{(i)}_\gamma c^{(i)}_\alpha) = \sqrt{\frac{2}{\mu_{i,\alpha}} } \tilde{g}^{(i)}_\gamma.
\end{align*}
On the other hand, 
if $\gamma', \delta' \in J_{h,\alpha'}$ with $h\in[r-1]$
 and $\alpha' \in[d_h]$, then
 $\tilde{f}^{(h)}_{\gamma'} (\tilde{f}^{(h)}_{\delta'})^\top$ 
 belongs to
 $\bigoplus_{i=h+1}^r M_i(\mathcal{Z}) \oplus \bigoplus_{i=h+1}^{r-1} L_i(\mathcal{Z})$,
 so that
\[
\tilde{f}^{(h)}_{\gamma'} (\tilde{f}^{(h)}_{\delta'})^\top
 = \sum_{i=h+1}^r \sum_{\alpha\in[d_i]}
 \tr\, (\tilde{f}^{(h)}_{\gamma'} (\tilde{f}^{(h)}_{\delta'})^\top 
\tilde{e}^{(i)}_\alpha \tilde{e}^{(i)}_\alpha ) 
+ \sum_{i=h+1}^{r-1} \sum_{\gamma\in[d'_i]}
 \tr\, (\tilde{f}^{(h)}_{\gamma'} (\tilde{f}^{(h)}_{\delta'})^\top
 \tilde{g}^{(i)}_\gamma  \tilde{g}^{(i)}_\gamma).
\]
Therefore, if $x = T T^{\top}$, we see from \eqref{eq:Ci2} that
\begin{align*}
x_{i,\alpha} &= \frac{t_{i,\alpha}^2}{\sqrt{\mu_{i,\alpha}}}
+\sum_{h\in[i-1]} \sum_{\alpha'\in[d_h]} \sum_{\gamma', \delta' \in J_{h,\alpha'}}
 \tau_{h, \gamma'} \tau_{h, \delta'} 
\tr\, (\tilde{f}^{(h)}_{\gamma'} (\tilde{f}^{(h)}_{\delta'})^\top 
\tilde{e}^{(i)}_\alpha ), \\
y_{i,\gamma} &= \sqrt{ \frac{2}{\mu_{i,\alpha}} } t_{i,\alpha} \tau_{i,\gamma}
+  \sum_{h\in[i-1]} \sum_{\alpha'\in[d_h]} \sum_{\gamma', \delta' \in J_{h,\alpha'}}
 \tau_{h, \gamma'} \tau_{h, \delta'} 
\tr\, (\tilde{f}^{(h)}_{\gamma'} (\tilde{f}^{(h)}_{\delta'})^\top \tilde{g}^{(i)}_\gamma ). \\
\end{align*}
Thus
\begin{align*}
\dd x_{i,\alpha} &= \frac{2 t_{i,\alpha}}{\sqrt{\mu_{i,\alpha} } }\,\dd t_{i,\alpha}
 + (\mbox{terms of $\dd\tau_{h,\gamma}$'s} ),\\
\dd y_{i,\gamma} &= \sqrt{ \frac{2}{\mu_{i,\alpha}} } t_{i,\alpha}\, \dd\tau_{i,\gamma}
 +  (\mbox{terms of $\dd t_{i,\alpha}$ and $\dd\tau_{h,\gamma'}$'s} ).
\end{align*}
Considering a lexicographic order among variables $t_{i,\alpha}$'s and $\tau_{i,\gamma}$'s
 with $t_{i,\alpha} \prec \tau_{i,\gamma}$, the Jacobian matrix is lower triangular and
 we obtain
\begin{equation} \label{eqn:dx}
\begin{aligned}
\dd x &= \prod_{i\in[r]} \prod_{\alpha\in[d_i]} \dd x_{i,\alpha} \cdot \prod_{i\in[r-1]} \prod_{\gamma\in[d'_i]} \dd y_{i,\gamma} \\
&= \prod_{i\in[r]} \prod_{\alpha\in[d_i]} 
\left\{ \frac{2 t_{i,\alpha}}{\sqrt{\mu_{i,\alpha} } }\,\dd t_{i,\alpha}
 \cdot \prod_{\gamma \in J_{i,\alpha} } 
\sqrt{ \frac{2}{\mu_{i,\alpha}} } t_{i,\alpha}\, \dd\tau_{i,\gamma} \right\} \\
&= \prod_{i\in[r]} \prod_{\alpha\in[d_i]}
 \Bigl\{ 
2^{1 + m_{i,\alpha}/2} \mu_{i,\alpha}^{-(1+m_{i,\alpha})/2} t_{i,\alpha}^{1 + m_{i,\alpha}}
 \, \dd t_{i,\alpha} \dd\tau^{(i)}_\alpha
 \Bigr\},
\end{aligned}
\end{equation}
where $\dd\tau^{(i)}_\alpha$ denotes the Lebesgue measure on $\R^{m_{i,\alpha}}$.

This completes the proof of (iii) and hence of Theorem~\ref{thm:new_Cholesky}.
\end{proof}

\subsection{Proof of Theorem \ref{thm:integral} }
Combining (i), (ii) and (iii) of Theorem \ref{thm:new_Cholesky},
 we see that
 the integral $\int_{\mathcal{P}} e^{-\tr(xA)} (\det x)^s\, \dd x$ 
equals the product of
\begin{equation} \label{eqn:integral_ialpha}
\begin{aligned}
{} & 2^{1 + m_{i,\alpha}/2} \mu_{i,\alpha}^{-(\mu_{i,\alpha} s + (1+m_{i,\alpha})/2)}\\
&\times
\int_{(0,\infty) \times \R^{m_{i,\alpha}} }
 \exp \Bigl( - \begin{bmatrix} t_{i,\alpha} & (\tau^{(i)}_\alpha)^\top \end{bmatrix}
\phi_{i,\alpha}(A)\begin{bmatrix} t_{i,\alpha} \\ \tau^{(i)}_\alpha \end{bmatrix}\Bigr)
 t_{i,\alpha}^{2 \mu_{i,\alpha} s + 1 + m_{i,\alpha}}\,\dd t_{i,\alpha} \dd\tau^{(i)}_\alpha
\end{aligned}
\end{equation}
over $i\in[r]$ and $\alpha\in[d_i]$.
\begin{lemma} \label{lemma:exp_quad}
Assume that
 $\hat{S} = \begin{bmatrix} \lambda & v \\ v^\top & S \end{bmatrix} 
 \in \mathrm{Sym}^+(1+m)$, 
 where
$\lambda \in \R,\, v \in \R^{1 \times m}$ and $S \in \mathrm{Sym}(m)$.
Then the integral
\begin{equation} \label{eqn:Gauss-Gamma} 
\int_{(0,\infty) \times \R^m}
\exp \Bigl( - \begin{bmatrix} t & \tau^\top \end{bmatrix}
\hat{S}
\begin{bmatrix} t\\ \tau \end{bmatrix} \Bigr)
\,t^{2s} \,\dd t\, \dd\tau
\end{equation} 
 converges if and only if $s >-1/2$, and in this case it equals 
\[
\frac12 \pi^{m/2} \Gamma\Bigl(s + \frac12\Bigr) \Bigl( \frac{\det \hat{S} }{\det S} \Bigr)^{-(s + 1/2)}
 (\det S)^{-1/2}.
\]
\end{lemma}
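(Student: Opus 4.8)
The plan is to apply Fubini's theorem and integrate out the vector variable $\tau$ first, since for fixed $t$ the inner integral is a standard Gaussian integral over $\R^m$, and only afterwards handle the scalar variable $t$, which will reduce to a gamma integral. The starting point is to expand the quadratic form as
\[
\begin{bmatrix} t & \tau^\top \end{bmatrix} \hat{S} \begin{bmatrix} t \\ \tau \end{bmatrix}
= \lambda t^2 + 2 t\, v\tau + \tau^\top S \tau.
\]
Because $\hat{S}$ is positive definite, its principal submatrix $S$ is positive definite, hence invertible, so I can complete the square in $\tau$. Writing $\beta := \lambda - v S^{-1} v^\top$ for the Schur complement of $S$ in $\hat{S}$, I would obtain
\[
\begin{bmatrix} t & \tau^\top \end{bmatrix} \hat{S} \begin{bmatrix} t \\ \tau \end{bmatrix}
= (\tau + t S^{-1} v^\top)^\top S\,(\tau + t S^{-1} v^\top) + \beta\, t^2,
\]
and recall that the Schur complement formula gives $\det \hat{S} = \beta \det S$, so that $\beta = \det\hat{S} / \det S > 0$ by positive definiteness of $\hat{S}$.

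For fixed $t > 0$, the translation $\tau \mapsto \tau + t S^{-1} v^\top$ together with the standard formula $\int_{\R^m} e^{-\xi^\top S \xi}\,\dd\xi = \pi^{m/2}(\det S)^{-1/2}$ evaluates the inner integral, leaving
\[
\frac{\pi^{m/2}}{(\det S)^{1/2}} \int_0^\infty e^{-\beta t^2}\, t^{2s}\,\dd t.
\]
The substitution $u = \beta t^2$ then turns the remaining integral into $\tfrac12 \beta^{-(s+1/2)} \int_0^\infty e^{-u} u^{s-1/2}\,\dd u = \tfrac12 \beta^{-(s+1/2)}\Gamma(s+\tfrac12)$, which converges precisely when $s - \tfrac12 > -1$, i.e.\ $s > -\tfrac12$. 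Collecting the factors and substituting $\beta = \det\hat{S}/\det S$ yields the claimed closed form.

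For the convergence dichotomy I would argue both directions: the $\tau$-integral converges for every $t$ since $S$ is positive definite, so convergence of the full integral is governed solely by $\int_0^\infty e^{-\beta t^2} t^{2s}\,\dd t$. Near $t = 0$ the integrand is comparable to $t^{2s}$ (the Gaussian factor being bounded away from $0$ and $\infty$ on any bounded interval), so the integral is finite near the origin if and only if $2s > -1$; the Gaussian decay handles $t \to \infty$ for every $s$. This pins down the condition $s > -1/2$. No step here is genuinely hard; the only point requiring care is the bookkeeping of determinants through the Schur complement, so that the factor $(\det\hat{S}/\det S)^{-(s+1/2)}$ and the stray $(\det S)^{-1/2}$ emerge with the correct exponents.
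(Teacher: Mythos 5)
Your proposal is correct and follows essentially the same route as the paper's proof: complete the square in $\tau$ via the Schur complement $\lambda - vS^{-1}v^\top$, integrate out $\tau$ with the standard Gaussian formula, reduce the remaining $t$-integral to a gamma integral, and convert $\lambda - vS^{-1}v^\top$ back to $\det\hat{S}/\det S$ using the block factorization of $\hat{S}$. Your explicit two-sided convergence argument (behavior near $t=0$ versus $t\to\infty$) is a slightly more careful rendering of what the paper leaves implicit in the gamma-integral step, but the substance is identical.
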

\begin{proof}
First of all,
we note that
\[
\hat{S}
 = \begin{bmatrix} \lambda & v \\ v^\top & S \end{bmatrix}
= \begin{bmatrix} 1 & v S^{-1} \\ 0 & I_m \end{bmatrix} 
\begin{bmatrix} \lambda - v S^{-1} v & 0 \\ 0 & S \end{bmatrix}
\begin{bmatrix} 1 & 0\\ S^{-1} v^\top & I_m \end{bmatrix}, 
\]
which implies that $\hat{S}$ is positive definite if and only if $S$ is positive definite
 and $\lambda - v S^{-1} v^\top >0$.
Moreover, we have
\begin{equation} \label{eqn:det_hatS}
\det \hat{S} = (\lambda - v S^{-1} v^\top) \det S. 
\end{equation}
On the other hand,
 we have
\begin{align*}
{} &
 \begin{bmatrix} t & \tau^\top \end{bmatrix}
\begin{bmatrix} \lambda & v \\ v^\top & S \end{bmatrix}
\begin{bmatrix} t\\ \tau \end{bmatrix} 
= \lambda t^2 + 2 t v \tau + \tau^\top S \tau \\
&= (\lambda - v S^{-1} v^\top) t^2 + (\tau^\top + t v S^{-1}) S (\tau + t S^{-1}v^\top).
\end{align*}
The classical Gaussian integral formula tells us that
$$ \int_{\R^m}
\exp(-  (\tau^\top + t v S^{-1}) S (\tau + t S^{-1}v^\top ))
\,\dd\tau = \pi^{m/2} (\det S)^{-1/2}, $$
which is independent of $t>0$.
Thus \eqref{eqn:Gauss-Gamma} equals
\begin{align*}
{} & \pi^{m/2} (\det S)^{-1/2}
\int_0^\infty e^{- (\lambda - v S^{-1} v^\top) t^2} t^{2s}\,\dd t\\
&= \pi^{m/2} (\det S)^{-1/2} \int_0^{\infty} e^{- (\lambda - v S^{-1} v^\top)u}
u^s \cdot \frac{1}{2} u^{-1/2} \,\dd u\\
&= \frac{\pi^{m/2}}{2} (\det S)^{-1/2} 
 \Gamma(s + 1/2) (\lambda - v S^{-1} v^\top)^{-(s+1/2)}.
\end{align*}
Therefore, the assertion follows from \eqref{eqn:det_hatS}.

\end{proof}

Lemma \ref{lemma:exp_quad} tells us that \eqref{eqn:integral_ialpha}
 equals
\begin{align*}
{} & 2^{m_{i,\alpha}/2} \mu_{i,\alpha}^{-(\mu_{i,\alpha} s + (1+m_{i,\alpha})/2)}\\
& \times
\pi^{m_{i,\alpha}/2} \Gamma\Bigl(\mu_{i,\alpha} s + 1 + \frac{m_{i,\alpha}}{2}\Bigr)
\Bigl( \frac{\det \phi_{i,\alpha}(A)}{\det \psi_{i,\alpha}(A)} 
\Bigr)^{-(\mu_{i,\alpha} s + 1+ m_{i,\alpha} /2 )}
 (\det \psi_{i,\alpha}(A))^{-1/2}, 
\end{align*}
whence Theorem \ref{thm:integral} follows.

\subsection{Proof of Theorem~\ref{thm:GC}}
\begin{proof}[Proof of Theorem~\ref{thm:GC}]
Fix $(i,\alpha)$.
By definition,
\[
  \hh_{i,\alpha} = L_i(\mathcal{Z})\,c_\alpha^{(i)}
  = \mathrm{span}\{B_t^{(i)}c_\alpha^{(i)} \colon t\in[q_i]\}.
\]
The matrices $U_t := B_t^{(i)}c_\alpha^{(i)}$ (or, equivalently, their compressed
versions $\widehat U_t$) thus form a spanning family of $\hh_{i,\alpha}$.
The Gram matrix $G_{i,\alpha}$ is precisely the Gram matrix of
$\{U_t\}_{t=1}^{q_i}$ with respect to the Frobenius inner product
$\langle X,Y\rangle_F = \tr(XY^\top)$.
Hence
\[
  \mathrm{rank}(G_{i,\alpha}) = \dim \hh_{i,\alpha}.
\]
A Cholesky factorization with complete pivoting yields pivot indices
$\mathcal{I}_{i,\alpha} = \{t_1,\dots,t_{m_{i,\alpha}}\}$ such that
$U_{t_1},\dots,U_{t_{m_{i,\alpha}}}$ are linearly independent and span
$\hh_{i,\alpha}$. Therefore $m_{i,\alpha} = \dim\hh_{i,\alpha}$,
and $\widetilde G_{i,\alpha}$ is the Frobenius Gram matrix of this basis.

Let $U_b$ be the column-block matrix of dimension
$p^2\times m_{i,\alpha}$ whose columns are the vectorizations
$\mathrm{vec}(U_{t_k})$, $k\in [m_{i,\alpha}]$. Then
\[
  \widetilde G_{i,\alpha} = U_b^\top U_b.
\]
Let $C\in\R^{m_{i,\alpha}\times m_{i,\alpha}}$ be an invertible matrix such that
the columns of $U_b C$ form an orthonormal basis of $\hh_{i,\alpha}$,
i.e.
\[
  C^\top \widetilde G_{i,\alpha} C = I_{m_{i,\alpha}}.
\]
If we denote these orthonormal basis elements by
$\{\tilde f_\gamma^{(i)}\}_{\gamma\in J_{i,\alpha}}$, then
by definition of $\psi_{i,\alpha}(A)$,
\[
  \psi_{i,\alpha}(A)
  = C^\top \Psi_{i,\alpha}(A) C,
\]
where $\Psi_{i,\alpha}(A)$ is the $A$-weighted Gram matrix in the
(non-orthonormal) basis $\{U_{t_k}\}_{k=1}^{m_{i,\alpha}}$.
Taking determinants and using $C^\top \widetilde G_{i,\alpha} C = I$,
we obtain
\[
  \det\psi_{i,\alpha}(A)
  = (\det C)^2 \det\Psi_{i,\alpha}(A),
  \qquad
  (\det C)^2 \det\widetilde G_{i,\alpha} = 1,
\]
and thus
\[
  \det\psi_{i,\alpha}(A)
  = \frac{\det\Psi_{i,\alpha}(A)}
         {\det\widetilde G_{i,\alpha}}.
\]

Next, consider the direct sum
$\R\tilde e^{(i)}_\alpha \oplus \hh_{i,\alpha}$.
In the orthonormal basis
$\{\tilde e^{(i)}_\alpha\}\cup\{\tilde f_\gamma^{(i)}\}_{\gamma\in J_{i,\alpha}}$,
the matrix of the bilinear form
$(X,Y)\mapsto \tr(AXY^\top)$ is $\phi_{i,\alpha}(A)$.
In the mixed basis
$\{\tilde e^{(i)}_\alpha\}\cup\{U_{t_k}\}_{k=1}^{m_{i,\alpha}}$,
the same form has matrix
\[
  \Phi_{i,\alpha}(A)
  =
  \begin{bmatrix}
    \lambda_{i,\alpha}(A) & V_{i,\alpha}(A) \\
    V_{i,\alpha}(A)^\top & \Psi_{i,\alpha}(A)
  \end{bmatrix}.
\]
The change-of-basis matrix from the orthonormal basis to the mixed basis
is block-diagonal,
\[
  \begin{bmatrix}
    1 & 0\\ 0 & C
  \end{bmatrix},\quad\mbox{so}\quad 
  \phi_{i,\alpha}(A)
  =
  \begin{bmatrix}
    1 & 0\\ 0 & C^\top
  \end{bmatrix}
  \Phi_{i,\alpha}(A)
  \begin{bmatrix}
    1 & 0\\ 0 & C
  \end{bmatrix},
\]
and hence
\[
  \det\phi_{i,\alpha}(A)
  = (\det C)^2 \det\Phi_{i,\alpha}(A).
\]
Therefore
\[
  \frac{\det\phi_{i,\alpha}(A)}{\det\psi_{i,\alpha}(A)}
  = \frac{\det\Phi_{i,\alpha}(A)}{\det\Psi_{i,\alpha}(A)}.
\]
Since $\Psi_{i,\alpha}(A)$ is positive definite (because $A\in\mathrm{Sym}^+(p)$
and the $U_k$ are linearly independent), the Schur complement formula gives
\[
  \frac{\det\Phi_{i,\alpha}(A)}{\det\Psi_{i,\alpha}(A)}
  = \lambda_{i,\alpha}(A)
    - V_{i,\alpha}(A) \Psi_{i,\alpha}(A)^{-1} V_{i,\alpha}(A)^\top.
\]
This is exactly the expression returned by the algorithm.

Finally, in the case $m_{i,\alpha}=0$, we have $\hh_{i,\alpha}=\{0\}$,
so $\psi_{i,\alpha}(A)$ is $0\times 0$ and $\phi_{i,\alpha}(A)=[\lambda_{i,\alpha}(A)]$ by the definitions in
Section~\ref{sec:structure_constants}. Hence the formulas in the algorithm
trivially coincide with the definitions. This completes the proof.
\end{proof}

\subsection{Proof of Theorem~\ref{thm:RDAG}}

We relate Seigal's local isomorphism conditions \eqref{eq:i-iii} of \cite{Seigal23} to our CER conditions in two steps:
(i) is the perfect-DAG (no-immorality) condition and yields chordality of the skeleton, while
(ii)--(iii) are equivalent to $2$-path regularity once $2$-paths are interpreted as common-child counts.

\medskip
The following result is standard; it is the characterization of perfect DAGs (DAGs with no immoralities),
see \cite[Sec.~2.1.3 and Prop.~2.17]{L96}.

\begin{lemma}\label{lem:revtop_is_peo}
Let $\mathcal{G}=(V,\vec E)$ be a DAG with skeleton $G=(V,E)$, and let $\succ$ be an inverse topological order of $\mathcal{G}$
(i.e.\ $j\to i$ implies $j\succ i$). The following are equivalent:
\begin{enumerate}
\item[(i)] $\mathcal{G}$ has no induced subgraph $i\to k \leftarrow j$ with $i$ and $j$ non-adjacent in $G$.
\item[(ii)] $G$ admits a perfect elimination ordering.
\end{enumerate}
If these equivalent conditions hold, then every inverse topological order of $\mathcal{G}$ is a perfect elimination ordering of $G$ and every perfect elimination ordering $\succ$ of $G$ is an inverse topological order of $\mathcal{G}$.
\end{lemma}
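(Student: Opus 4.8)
My plan is to route everything through a single dictionary between $\mathcal{G}$ and $G$: for any $S\subseteq V$, a vertex $v$ that is a sink of the induced subgraph $\mathcal{G}[S]$ (no outgoing edge inside $S$) has its $G[S]$-neighbourhood equal to the set of its parents lying in $S$, so under the no-immorality hypothesis these parents are pairwise adjacent and $v$ is simplicial in $G[S]$. The first thing I would record is that condition (i) passes to every induced subgraph: an immorality $i\to k\leftarrow j$ inside $\mathcal{G}[S]$ with $i,j$ non-adjacent in $G[S]$ is, because $i,j\in S$, also an immorality of $\mathcal{G}$ (non-adjacency in $G[S]$ forces non-adjacency in $G$). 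Hence (i) for $\mathcal{G}$ gives (i) for all $\mathcal{G}[S]$, which is exactly what is needed to produce the clique of parents at each elimination step.

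For (i)$\Rightarrow$(ii) I would enumerate the vertices in increasing $\succ$-order as $v_1\prec\cdots\prec v_p$ and check the peo property of the paper directly, namely that each $v_k$ is simplicial in $G[\{v_k,\ldots,v_p\}]$. Writing $S_k=\{v:v\succeq v_k\}=\{v_k,\ldots,v_p\}$, if $v_k\to u$ then $v_k\succ u$ and so $u\prec v_k$, i.e. $u\notin S_k$; thus $v_k$ is a sink of $\mathcal{G}[S_k]$. By the dictionary of the previous paragraph applied to $\mathcal{G}[S_k]$ (which still satisfies (i)), the $G[S_k]$-neighbours of $v_k$ are its parents in $S_k$ and they form a clique, so $v_k$ is simplicial. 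This verifies that the given $\succ$ — and, by the same argument, any inverse topological order — is a perfect elimination ordering, yielding (ii) and the first half of the final assertion.

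For the converse I would establish the contrapositive of ``$\succ$ is a peo $\Rightarrow$ (i)''. If (i) fails, pick an immorality $i\to k\leftarrow j$ with $i,j$ non-adjacent; since parents exceed their children in $\succ$, we get $i\succ k$ and $j\succ k$, so $i,j\in S_k$ and both are $G[S_k]$-neighbours of $k$. Their non-adjacency prevents $k$ from being simplicial in $G[S_k]$, so $\succ$ is not a peo. Combined with the second paragraph this gives the equivalence of (i) with the statement that the inverse topological order $\succ$ is a perfect elimination ordering, hence with (ii). The \emph{moreover} clause then reduces to the same bookkeeping: the inclusion ``inverse topological order $\Rightarrow$ peo'' is exactly paragraph two, while the reverse reading, relating a given peo back to the orientation of $\mathcal{G}$, is where the convention-matching discussed below must be applied with care.

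The step I expect to be the true obstacle is not an inequality or a counting argument but the precise alignment of conventions. The paper's peo eliminates the \emph{first} listed vertex, so its induced subgraphs are the tails $\{v_k,\ldots,v_p\}$, whereas the inverse topological order is specified by $j\to i\Rightarrow j\succ i$; one must fix once and for all the identification ``increasing $\succ$-order $=$ elimination order, sinks eliminated first'' so that sinks of induced subgraphs of $\mathcal{G}$ correspond to simplicial vertices rather than to sources (for which the no-immorality hypothesis would say nothing). Once this identification is pinned down, every implication above collapses to a one-line neighbourhood computation, with the inheritance of (i) to induced subgraphs supplying the clique of parents that makes each eliminated vertex simplicial.
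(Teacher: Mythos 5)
Your first three paragraphs are sound, and they prove the only version of the equivalence that is actually true: the dictionary (a sink of $\mathcal{G}[S]$ whose parents in $S$ are pairwise adjacent is simplicial in $G[S]$), the inheritance of (i) by induced subgraphs, the implication ``(i) $\Rightarrow$ every inverse topological order, read sinks-first, is a peo,'' and the converse ``the given order $\succ$ is a peo $\Rightarrow$ (i)'' are all correct, including your explicit resolution of the sinks-versus-sources convention issue. For comparison: the paper gives no proof at all, citing the lemma as a standard fact from Lauritzen; the standard fact in question is precisely the equivalence you established, namely that $\mathcal{G}$ has no immoralities if and only if the elimination orders compatible with the orientation of $\mathcal{G}$ are perfect.

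The genuine gap is the final step ``hence with (ii).'' What you have is (i) $\iff$ ($\succ$ is a peo), plus the trivial implication ($\succ$ is a peo) $\Rightarrow$ (ii); to close the loop you would need (ii) $\Rightarrow$ ($\succ$ is a peo), i.e.\ that the mere existence of \emph{some} peo of the skeleton forces the particular orientation of $\mathcal{G}$ to be compatible with one. This implication is not just unproven in your write-up, it is false, so the gap cannot be repaired: take $\mathcal{G}$ with edges $1\to 2$ and $3\to 2$. Its skeleton is the path $1-2-3$, which is chordal and admits a peo, so (ii) holds; yet $1\to 2\leftarrow 3$ is an immorality with $1,3$ non-adjacent, so (i) fails. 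For the same reason the second half of the ``moreover'' clause, which you defer to ``convention-matching,'' is also false as literally stated: for the single edge $1\to 2$ both orderings of $\{1,2\}$ are peos of the skeleton, but only one of them can be an inverse topological order of $\mathcal{G}$. In short, your core argument is correct and is the statement the paper actually needs, but as a proof of the lemma as written the proposal contains an unbridgeable leap --- the fault lying with the lemma's formulation of (ii) and of the final clause, which overstate the standard result you proved.
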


\begin{lemma}\label{lem:cpeoRDAG}
Let $(\mathcal{G},c)$ be a colored DAG with compatible coloring and assume \eqref{eq:i-iii} (ii).
Then there exists an inverse topological order $\succ$ of $V$ such that each vertex color class forms a consecutive block in $\succ$, i.e.
\[
i\succ j\succ k \ \text{ and }\ c(i)=c(k)\quad\Longrightarrow\quad c(j)=c(i).
\]
\end{lemma}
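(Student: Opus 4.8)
The plan is to contract each vertex color class to a single node and to show that the resulting digraph on colors is acyclic; a linear extension of that acyclic order then prescribes the block order, while the arbitrary order inside each block is harmless because no edge of $\mathcal{G}$ can stay inside a color class. Concretely, I would introduce the \emph{color-contraction digraph} $\mathcal{H}$ on the vertex-color set $[r]=c(V)$, placing a directed edge $a\to b$ (possibly with $a=b$) whenever $\mathcal{G}$ contains an edge $i\to j$ with $c(i)=a$ and $c(j)=b$. The whole lemma reduces to the claim that $\mathcal{H}$ is acyclic and loopless.

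The engine driving acyclicity is condition (ii) in \eqref{eq:i-iii}, which I would use through the following propagation statement: if $a\to b$ in $\mathcal{H}$, then \emph{every} vertex of color $a$ has a child of color $b$. Indeed, if $i\to j$ with $c(i)=a$ and $c(j)=b$, then $j$ is a color-$b$ vertex of $(\mathcal{G}_i,c)$; for any other vertex $i'$ with $c(i')=a$ the isomorphism $(\mathcal{G}_{i'},c)\cong(\mathcal{G}_i,c)$ preserves the number of color-$b$ vertices. Since the root of each such star carries color $a$ and its non-root vertices are exactly its children, a color-$b$ vertex of $(\mathcal{G}_{i'},c)$ (when $b\ne a$), or a second color-$a$ vertex beyond the root (when $b=a$), must be a child of $i'$. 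Hence $i'$ too has a child of color $b$.

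Granting propagation, acyclicity follows by a finiteness argument, and this is really the only nontrivial step, so I would spell it out. A closed directed walk $a_0\to a_1\to\cdots\to a_m=a_0$ in $\mathcal{H}$ lets me build an infinite directed walk in $\mathcal{G}$: start at any $v_0$ with $c(v_0)=a_0$, and repeatedly choose $v_{t+1}$ to be a child of $v_t$ of color $a_{(t+1)\bmod m}$, which exists by propagation. Since $\mathcal{G}$ is a finite DAG, a directed walk can never revisit a vertex, so an infinite directed walk is impossible; thus $\mathcal{H}$ has no closed walk, i.e.\ it is acyclic and in particular loopless. Looplessness says precisely that $\mathcal{G}$ has no edge between two vertices of the same color.

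Finally I would assemble the order. Choose a linear extension $\succ_{\mathrm{col}}$ of the acyclic digraph $\mathcal{H}$, so that $a\to b$ in $\mathcal{H}$ implies $a\succ_{\mathrm{col}} b$, and define $\succ$ on $V$ by declaring $v\succ w$ whenever $c(v)\succ_{\mathrm{col}} c(w)$, breaking ties inside each color class arbitrarily (legitimate, since no edge lies inside a class). By construction each color class is a consecutive block, which yields the displayed implication: if $i\succ j\succ k$ and $c(i)=c(k)$, then $j$ lies in the same block and $c(j)=c(i)$. To see that $\succ$ is an inverse topological order, take any edge $j\to i$ of $\mathcal{G}$; looplessness forces $c(j)\ne c(i)$, so $c(j)\to c(i)$ is a genuine edge of $\mathcal{H}$ and $c(j)\succ_{\mathrm{col}} c(i)$, whence $j\succ i$. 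This completes the argument, with the acyclicity of $\mathcal{H}$ being the crux and everything else routine bookkeeping.
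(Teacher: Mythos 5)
Your proposal is correct and follows essentially the same route as the paper: both build the color-contraction digraph, use condition (ii) of \eqref{eq:i-iii} to propagate the existence of a child of a given color to all vertices of the same color, derive acyclicity from the impossibility of an infinite directed walk in a finite DAG, and then refine an inverse topological order of the color digraph to an order on vertices. Your treatment is in fact slightly more careful than the paper's, since you make explicit both the looplessness of the color digraph (which the paper isolates in a separate lemma on same-color edges) and the verification that the assembled vertex order is genuinely inverse topological.
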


\begin{proof}
Fix vertex colors $\alpha,\beta$.
By \eqref{eq:i-iii} (ii), for any $v,v'$ with $c(v)=c(v')=\alpha$ we have $(\mathcal{G}_v,c)\cong(\mathcal{G}_{v'},c)$,
hence $v$ has a $\beta$-colored child iff $v'$ has a $\beta$-colored child.

Let $\mathcal{C}_V:=c(V)$ and define the color directed graph $H=(\mathcal{C}_V,E^H)$ by
\[
(\alpha\to\beta)\in E^H
\ \Longleftrightarrow\
\exists\, (u\to v)\in \vec{E} \ \text{ with }\ c(u)=\alpha,\ c(v)=\beta.
\]
Then $H$ is acyclic. Indeed, if $H$ contained a directed cycle
$\alpha_0\to\alpha_1\to\cdots\to\alpha_{t-1}\to\alpha_0$, choose $v_0$ with $c(v_0)=\alpha_0$. By previous considerations,  every vertex of color $\alpha_r$ has a child of color
$\alpha_{r+1}$ (indices mod $t$), hence we can build a directed walk
\[
v_0\to v_1\to\cdots\to v_t \quad\text{with}\quad c(v_r)=\alpha_r,\ \ c(v_t)=\alpha_0.
\]
Iterating produces an infinite directed walk in the finite graph $\mathcal{G}$, forcing a repeated vertex and
therefore a directed cycle in $\mathcal{G}$, a contradiction. Thus $H$ is a DAG.

Choose an inverse topological order $\succ_{\mathcal{C}}$ of $H$ and order vertices by their colors under $\succ_{\mathcal{C}}$
(breaking ties arbitrarily within each color class). The resulting order $\succ$ is inverse topological for $\mathcal{G}$ and
has the required block property by construction.
\end{proof}

\begin{lemma}\label{lem:no-same-colour-edges}
Let $(\mathcal{G},c)$ be a colored DAG with compatible coloring and assume \eqref{eq:i-iii} (ii).
Then there are no directed edges between vertices of the same vertex color.
\end{lemma}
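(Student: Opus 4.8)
The plan is to argue by contradiction, turning a single monochromatic edge into an infinite directed walk, which a finite DAG cannot support. So suppose, contrary to the claim, that there is a directed edge $u\to v$ in $\mathcal{G}$ with $c(u)=c(v)$, and aim to derive a directed cycle.

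\textbf{Propagation via condition (ii).} Since $c(u)=c(v)$, hypothesis \eqref{eq:i-iii}~(ii) supplies a colored-graph isomorphism $\varphi\colon(\mathcal{G}_u,c)\to(\mathcal{G}_v,c)$. The key structural observation is that $\mathcal{G}_u$ is an out-star: its only edges are $u\to k$ for $k\in\ch(u)$, so $u$ has out-degree $|\ch(u)|$ while every child has out-degree $0$ inside $\mathcal{G}_u$. Because $v\in\ch(u)$ we have $\ch(u)\neq\varnothing$, so $u$ is the \emph{unique} vertex of positive out-degree in $\mathcal{G}_u$; the same holds for $v$ in $\mathcal{G}_v$. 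As graph isomorphisms preserve out-degrees, this forces $\varphi(u)=v$. I would then track the image of the distinguished child $v\in\ch(u)$: set $w:=\varphi(v)\in\ch(v)$. Since $\varphi$ preserves vertex colors, $c(w)=c(v)=c(u)$, and since $w\in\ch(v)$, the edge $v\to w$ is again a directed edge whose endpoints share a color.

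\textbf{Iteration and contradiction.} The previous paragraph shows that every directed edge with equal-colored endpoints begets another such edge emanating from its head. Starting from $u\to v$ and iterating produces an infinite directed walk $u\to v\to w\to\cdots$ along which the vertex color is constant. In a finite graph any such walk must revisit a vertex, and in a directed graph a repeated vertex along a directed walk yields a directed cycle — contradicting that $\mathcal{G}$ is a DAG. This contradiction establishes that no directed edge can join two vertices of the same color.

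\textbf{Expected main obstacle.} The only delicate step is the unambiguous identification $\varphi(u)=v$ of the two roots; everything downstream is routine. This hinges on $\ch(u)\neq\varnothing$ (guaranteed by $v\in\ch(u)$), which makes $u$ the unique source of the out-star $\mathcal{G}_u$ and lets the out-degree-preservation property of isomorphisms pin down the image of $u$. If instead one tried to argue purely via colors one would have to rule out a child of $u$ sharing $u$'s color being mapped to $u$, so phrasing the root as the unique positive-out-degree vertex is the clean way to avoid that complication.
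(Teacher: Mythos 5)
Your proof is correct and follows essentially the same route as the paper's: use condition (ii) to show that every vertex sharing the color of the offending edge's endpoints must itself have a same-colored child, then iterate to build an infinite directed walk, which forces a directed cycle in the finite DAG — a contradiction. Your extra care in pinning down $\varphi(u)=v$ via out-degrees is a fine way to make the propagation step explicit, though the paper's version sidesteps it by simply comparing $(\mathcal{G}_v,c)\cong(\mathcal{G}_j,c)$ for an arbitrary vertex $v$ of the same color.
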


\begin{proof}
Assume for contradiction that $j\to i$ with $c(i)=c(j)=:\alpha$. 
If $v$ is any vertex with $c(v)=\alpha$, condition \eqref{eq:i-iii} (ii) gives $(\mathcal{G}_v,c)\cong(\mathcal{G}_j,c)$,
hence $v$ also has a child of color $\alpha$. Iterating produces a directed cycle in the finite DAG $\mathcal{G}$, a contradiction.
\end{proof}

\begin{proof}[Proof of Theorem~\ref{thm:RDAG}]
\noindent{$(\Rightarrow)$}
Assume that $c$ is a compatible coloring ant that $(\mathcal{G},c)$ satisfies \eqref{eq:i-iii}.
By Lemma~\ref{lem:cpeoRDAG} choose an inverse topological order $\succ$ whose color classes are consecutive blocks. 
By \eqref{eq:i-iii} (i) and Lemma~\ref{lem:revtop_is_peo}, the order $\succ$ is a perfect elimination ordering of the skeleton $G$.
By Lemma~\ref{lem:no-same-colour-edges}, $G$ has no edges within a vertex color class; hence the induced ordering of color blocks
is a cpeo.

Now let $m$ be the $2$-path function defined with respect to the ordered partition induced by $\succ$.
If $\{v,w\}\in E$ and $u\in V_{\le c(v)\wedge c(w)}$, then $u$ appears earlier than both $v$ and $w$ in the topological order. 
Thus $v\sim u$ in the skeleton forces $v\to u$ in $\mathcal{G}$, and similarly $w\to u$; hence such $u$ are exactly common children.
Consequently, for any colors $h,h'$ the value $m_{v\leftrightarrow w}(h,h')$ counts common children $u\in\ch(v)\cap\ch(w)$
with unordered pair of incoming edge colors $\{c(v\to u),c(w\to u)\}=\{h,h'\}$.
Therefore $\mathcal{G}_i\cong\mathcal{G}_j$ iff $m_{i\leftrightarrow i}=m_{j\leftrightarrow j}$, and
$\mathcal{G}_{(j\to i)}\cong\mathcal{G}_{(\ell\to k)}$ iff $m_{j\leftrightarrow i}=m_{\ell\leftrightarrow k}$.
Hence \eqref{eq:i-iii} (ii)-(iii) are equivalent to (M1), i.e., $2$-path regularity.
Since there are no edges within a vertex color class, (M2) holds trivially, so $(G,\mathcal{C})$ is symmetric CER.

\noindent$(\Leftarrow)$
Assume $(G,\mathcal{C})$ is symmetric CER and that $G$ has no edges between vertices of the same color.
Let $(V_{\eta_1},\dots,V_{\eta_r})$ be a cpeo witnessing CER and choose any vertex order $\succ$ consistent with it
(so vertices are ordered by color blocks).
Because there are no edges within any color class, every vertex in a given block has all its later neighbors in later blocks,
and the cpeo condition implies that these later neighbors form a clique; hence $\succ$ is a perfect elimination ordering of $G$.
Applying Lemma~\ref{lem:revtop_is_peo} yields \eqref{eq:i-iii} (i).
Moreover, (M1) is exactly constancy of $m_{v\leftrightarrow w}$ on extended-edge color classes, which (by the same common-child
interpretation as above) is equivalent to \eqref{eq:i-iii} (ii)-(iii).
\end{proof}

\subsection{Proof of Theorem~\ref{thm:group}}

\begin{proof}[Proof of Theorem~\ref{thm:group}]
Set $\mathcal{T}:=\tri(\Zsp)$ and $\mathcal{T}^+:=\tri(\Zsp)^+$. Our proof is based on the following fact, which is not hard to show:
\begin{equation}\label{eq:group_iff_algebra}
\mathcal{T}^+\ \text{is a group}\quad\Longleftrightarrow\quad \mathcal{T}\ \text{is closed under multiplication}.
\end{equation}
It remains to identify multiplicative closure of $\mathcal{T}$ with (M3).

Recall the Definition \ref{def:base} of matrices $(J^k)$. Set $T^{k}:=\tri(J^{k})$. 
The family $\{J^{k}\}_{k\in[r+R]}$ spans $\Zsp$, hence $\{T^{k}\}_{k\in[r+R]}$ spans $\mathcal{T}$. Therefore, the space $\mathcal{T}$ is closed under multiplication if and only if
$T^{k}T^{h}\in\mathcal{T}$ for all $k,h$.
Membership in $\mathcal{T}$ is exactly the requirement that, for all $(v,w),(v',w')\in\tilde E$,
\begin{align}\label{eq:M3'}
c(v,w)=c(v',w')\quad\Longrightarrow\quad
\bigl(T^{k}T^{h}\bigr)_{vw}=\bigl(T^{k}T^{h}\bigr)_{v'w'},
\end{align}
Since $T^{k}$ is block lower triangular,
\[
T^{k}_{vu}\neq 0 \ \implies\ c(v)\ge c(u),
\qquad
T^{h}_{uw}\neq 0 \ \implies\ c(u)\ge c(w).
\]
Therefore
\begin{align*}
(T^{k}T^{h})_{vw}
&=\sum_{u\in V} T^{k}_{vu}T^{h}_{uw} \notag\\
&=\Bigl|\Bigl\{u\in V:\ c(w)\le c(u)\le c(v),\ c(v,u)=k,\ c(u,w)=h\Bigr\}\Bigr|
=\mu_{v\to w}(k,h).
\end{align*}
Thus, \eqref{eq:M3'} is equivalent to (M3). Combining with \eqref{eq:group_iff_algebra} finishes the proof.
\end{proof}

\bibliographystyle{plainnat}
\bibliography{Bibl}

\end{document}